\PassOptionsToPackage{dvipsnames,table}{xcolor}
\documentclass[11pt]{article}

\usepackage[utf8]{inputenc}
\usepackage[T1]{fontenc}
\usepackage{lmodern}
\usepackage[margin=1in]{geometry}
\usepackage[round,authoryear]{natbib}
\setcitestyle{authoryear,round,citesep={;},aysep={,},yysep={;}}
\usepackage{amsmath,amssymb,amsthm,mathtools}
\usepackage{bm}
\usepackage{booktabs,multirow,array,tabularx}
\usepackage{graphicx}
\usepackage{xcolor}
\usepackage{tikz}
\usetikzlibrary{arrows.meta,positioning,calc,fit,backgrounds,shapes.geometric}
\usepackage{algorithm}
\usepackage[noend]{algpseudocode}
\usepackage{enumitem}
\usepackage{microtype}
\usepackage{hyperref}
\usepackage{url}
\usepackage[capitalize,nameinlink]{cleveref}
\usepackage{thmtools,thm-restate}
\usepackage{caption}
\usepackage{subcaption}
\usepackage{etoolbox}
\usepackage{placeins}

\hypersetup{colorlinks=true,linkcolor=RoyalBlue,citecolor=OliveGreen,urlcolor=Maroon,
  pdftitle={Saddle-Point Problems with a Low-Dimensional Block Do Not Need Accurate Inner Solves},
  pdfauthor={Ivan Fomin, Alexander V. Gasnikov}}
\theoremstyle{plain}
\newtheorem{theorem}{Theorem}
\newtheorem{proposition}[theorem]{Proposition}
\newtheorem{lemma}[theorem]{Lemma}
\newtheorem{corollary}[theorem]{Corollary}

\theoremstyle{definition}

\newtheorem{assumption}{Assumption}

\theoremstyle{remark}
\newtheorem{remark}{Remark}

\crefname{theorem}{Theorem}{Theorems}\Crefname{theorem}{Theorem}{Theorems}
\crefname{lemma}{Lemma}{Lemmas}\Crefname{lemma}{Lemma}{Lemmas}
\crefname{proposition}{Proposition}{Propositions}\Crefname{proposition}{Proposition}{Propositions}
\crefname{corollary}{Corollary}{Corollaries}\Crefname{corollary}{Corollary}{Corollaries}
\crefname{conjecture}{Conjecture}{Conjectures}\Crefname{conjecture}{Conjecture}{Conjectures}
\crefname{assumption}{Assumption}{Assumptions}\Crefname{assumption}{Assumption}{Assumptions}
\crefname{definition}{Definition}{Definitions}\Crefname{definition}{Definition}{Definitions}
\crefname{remark}{Remark}{Remarks}\Crefname{remark}{Remark}{Remarks}
\crefname{algorithm}{Algorithm}{Algorithms}\Crefname{algorithm}{Algorithm}{Algorithms}
\crefname{section}{Section}{Sections}\Crefname{section}{Section}{Sections}
\crefname{subsection}{Section}{Sections}\Crefname{subsection}{Section}{Sections}
\crefname{appendix}{Appendix}{Appendices}\Crefname{appendix}{Appendix}{Appendices}
\crefname{subappendix}{Appendix}{Appendices}\Crefname{subappendix}{Appendix}{Appendices}
\crefname{figure}{Figure}{Figures}\Crefname{figure}{Figure}{Figures}
\crefname{table}{Table}{Tables}\Crefname{table}{Table}{Tables}
\crefname{enumi}{item}{items}\Crefname{enumi}{Item}{Items}
\crefformat{equation}{#2(#1)#3}\Crefformat{equation}{#2(#1)#3}

\algrenewcommand\algorithmiccomment[1]{\hfill\(\triangleright\)~\textit{#1}}
\AtBeginEnvironment{algorithmic}{\small}

\newcommand{\RR}{\mathbb{R}}
\newcommand{\eps}{\varepsilon}
\newcommand{\cX}{\mathcal{X}}
\newcommand{\cY}{\mathcal{Y}}

\newcommand{\nrm}[1]{\|#1\|}
\newcommand{\ip}[2]{\langle #1,#2\rangle}
\newcommand{\Gap}{\operatorname{Gap}}
\newcommand{\proj}{\operatorname{proj}}
\newcommand{\aff}{\operatorname{aff}}
\newcommand{\vol}{\operatorname{vol}}
\newcommand{\diam}{\operatorname{diam}}
\newcommand{\cg}{\operatorname{cg}}
\newcommand{\argmin}{\operatorname*{arg\,min}}
\newcommand{\argmax}{\operatorname*{arg\,max}}
\newcommand{\logp}{\log_{+}}
\newcommand{\Ox}{\mathsf{O}_x}
\newcommand{\Oy}{\mathsf{O}_y}
\newcommand{\Tx}{T_x}
\newcommand{\Ty}{T_y}
\newcommand{\Otil}{\widetilde{O}}

\newcommand{\My}{M_y}

\newcommand{\Ry}{R_y}
\newcommand{\Rx}{R_x}

\newcommand{\defeq}{\coloneqq}
\newcommand{\T}{^{\top}}

\definecolor{boxblue}{RGB}{232,240,252}
\definecolor{boxgreen}{RGB}{234,246,236}
\definecolor{boxorange}{RGB}{253,241,226}

\newcommand{\takeaway}[1]{\par\smallskip\noindent\colorbox{boxgreen}{\parbox{\dimexpr\linewidth-2\fboxsep}{\small\textbf{Takeaway.} #1}}\par\smallskip}

\makeatletter
\providecommand{\theHALG@line}{}
\renewcommand{\theHALG@line}{\thealgorithm.\arabic{ALG@line}}
\makeatother

\title{Saddle-Point Problems with a Low-Dimensional Block\\Do Not Need Accurate Inner Solves}

\author{Ivan Fomin\\Yandex, Moscow, Russia
\and
Alexander V. Gasnikov\\Innopolis University, Innopolis, Russia}

\date{}

\begin{document}

\maketitle

\begin{abstract}
Many learning problems couple a high-dimensional block of parameters with a handful of adversarial or dual variables: worst-group risk over a few groups, or learning under a few constraints. We consider $\min_{x\in\mathcal X}\max_{y\in\mathcal Y} f(x,y)$ with $\mathcal X\subseteq\mathbb{R}^n$, a compact convex set $\mathcal Y\subset\mathbb{R}^m$, and $m\ll n$, where every slice $f(\cdot,y)$ is smooth and strongly convex with condition number $\kappa$, and we count the oracle calls for $x$ and for $y$ separately. The textbook approach runs a cutting-plane method in $y$ and solves every inner problem to accuracy $\varepsilon$ with an accelerated method; it needs $O(m\log(1/\varepsilon))$ calls for $y$ but $O(m\sqrt{\kappa}\log^2(1/\varepsilon))$ calls for $x$. We show that the inner problems need not be solved accurately. Our method, certificate transport, keeps a strongly convex lower model of a single slice and uses it as a prior for a short accelerated run on the next slice. By concavity in $y$, every call for $y$ then either cuts the localizer or moves the lower model to a mixture of the two slices with a certified increase of the lower bound. For $m=1$ this gives an $\varepsilon$-saddle point after $O(\sqrt{\kappa}\log(1/\varepsilon))$ calls for $x$, up to an initialization term, and $O(\log(1/\varepsilon))$ calls for $y$; both counts are optimal, even though $f(x,\cdot)$ is only assumed to be concave and Lipschitz. For general $m$, with centers of gravity of the localizer treated as computable, the method needs $O((m+\sqrt{m\kappa})\log(1/\varepsilon))$ calls for $x$ and the optimal $O(m\log(1/\varepsilon))$ calls for $y$. Under strong concavity in $y$, a two-point accelerated method makes the number of calls for $x$ independent of $m$.
\end{abstract}

\noindent\textbf{Keywords:} saddle-point problems; minimax optimization; cutting-plane methods; center-of-gravity method; accelerated gradient methods; oracle complexity; inexact oracles; group distributionally robust optimization; constrained learning.

\section{Introduction}\label{sec:intro}

We consider the saddle-point problem
\begin{equation}\label{eq:problems}
\min_{x\in\cX}\ \max_{y\in\cY} f(x,y), \qquad x\in\RR^n,\ \ y\in\RR^m,\ \ m\ll n,
\end{equation}
in which the two blocks of variables differ greatly in size: $n$ is large, while $m$ is small. For every $y$, the function $f(\cdot,y)$ is $\mu$-strongly convex and $L$-smooth with condition number $\kappa=L/\mu$. For every $x$, the function $f(x,\cdot)$ is concave, with supergradients bounded in norm by $M_y$ on a convex compact set $\cY$ of diameter $R_y$. No smoothness in $y$ is assumed.

Problems of this form arise, for example, in learning under a few constraints \citep{agarwal2018reductions,cotter2019two}. There $f$ is the Lagrangian, $y$ is the vector of Lagrange multipliers, bounded as usual under Slater's condition, and $m$ is the number of constraints. Another example is group distributionally robust learning \citep{sagawa2020distributionally}, where $y$ weights a few groups. In both examples $f$ is linear in $y$, so the dual function is differentiable under strong convexity in $x$, but its smoothness constant, of order $G^2/\mu$ with $G$ a bound on the Jacobian of the losses, is huge for small ridge parameters, and nonsmooth regularizers of $y$ (e.g., $\ell_1$ or total-variation penalties on the group weights) make it nonsmooth. Cutting planes are insensitive to both, so we assume no smoothness in $y$.

Each block calls for its own method. For the large block, accelerated gradient descent is optimal and dimension-free \citep{nesterov1983method}. For the small block, the center-of-gravity method needs only $O(m\log\frac1\eps)$ queries and requires no smoothness at all \citep{levin1965algorithm,newman1965location,nemirovski1983problem}. The two blocks are also accessed through different computations: a gradient in $x$ costs a backward pass through the model, whereas a query in $y$ returns only a function value and a supergradient in a low-dimensional space. We therefore count the calls to the two oracles separately, as $\Tx$ and $\Ty$. The question is how to combine the two methods so that both counts are small.

\paragraph{The obvious approach and its cost.}
The textbook solution nests one method inside the other. Cutting planes are run on the dual function $\phi(y)=\min_xf(x,y)$, and every cut is computed by running accelerated gradient descent in $x$ to high accuracy. With $B=M_yR_y$, this gives $\Ty=O(m\log\frac B\eps)$ but $\Tx=O(m\sqrt\kappa\log\frac B\eps\log\frac{\mu R_x^2+B}\eps)$, where $R_x$ is the diameter of $\cX$ (\citealp{gladin2023solving}; see \eqref{eq:nested}). The waste is plain: each inner problem is solved from scratch, and its solution is discarded as soon as $y$ moves.

\paragraph{Our idea.}
The inner problems at different $y$ are closely related, so the work spent on one of them can be reused. Suppose we hold a quadratic lower model $q_b\le f(\cdot,y_b)$, which certifies $\phi(y_b)\ge b$. We run a block of $K$ accelerated gradient steps on $f(\cdot,y_c)$ at the current center of the localizer $y_c$, using $q_b$ as a prior, and make a single $y$-query. By concavity in $y$, this yields a lower model of the same form at $y^+=(1-\vartheta)y_b+\vartheta y_c$, with a bound $b^+$ that increases whenever the queried value exceeds the current threshold. We call this \emph{certificate transport}. With it, no inner problem is ever solved to accuracy $\eps$: every $y$-query either shrinks the localizer or raises the certified lower bound.

\subsection{Contributions}

\begin{enumerate}[leftmargin=1.3em,itemsep=2pt,topsep=2pt]
\item \textbf{Certificate transport.} We introduce a mechanism that couples cutting planes in $y$ with blocks of $K$ accelerated gradient steps in $x$, without ever solving an inner problem to high accuracy. It requires only concavity and Lipschitz continuity in $y$. The method maintains a computable bound on the saddle-point gap and returns a pair $(\bar x,y_b)$ certified by it (\cref{thm:general}). Its correctness does not depend on $K$: the block length affects only the running time.

\item \textbf{Complexity for any block length} (\cref{thm:general}). For every $K\ge1$ the method reaches gap $\eps$ with
\[
\Ty=O\Big(\big(m+\tfrac{\kappa}{K^2}\big)\log\tfrac{B}{\eps}\Big),
\qquad
\Tx=O\Big(\sqrt\kappa\,\big[1+\log_+\tfrac{\mu R_x^2}{B}\big]\Big)+O\Big(\big(mK+\tfrac{\kappa}{K}\big)\log\tfrac{B}{\eps}\Big),
\]
where the first term in $\Tx$ is a one-time initialization cost. The choice $K=\sqrt{\kappa/m}$ gives $\Ty=O(m\log\frac B\eps)$, the optimal count for cutting planes, and $\Tx=O(\sqrt{m\kappa}\log\frac B\eps)$ plus initialization. The product $m\sqrt\kappa$ of the nested scheme is thus replaced by the geometric mean $\sqrt{m\kappa}$, and the squared logarithm disappears. For $m=1$, both counts match the known lower bounds for each block separately. The same choice of $K$ minimizes our upper bound on any weighted cost $c_x\Tx+c_y\Ty$ up to a factor of three (\cref{pr:3opt}).

\item \textbf{Independence of $m$ under strong concavity} (\cref{sec:strong}). If $f$ is also strongly concave in $y$ and the coupling between the blocks is bounded by $\chi$, a two-point accelerated method uses $\Tx=O(\sqrt{\kappa+\chi}\,\Lambda)$ gradients, independently of $m$, and $\Ty=O(m\sqrt{1+\chi}\,\Lambda^2)$, with $\Lambda=1+\log\frac{B(1+\chi)}\eps$. An accelerated method on the primal function has the same $\Tx$ but solves $O(\sqrt{\kappa+\chi})$ instead of $O(\sqrt{1+\chi})$ problems in $y$ per halving of the error.
\end{enumerate}

Like the classical center-of-gravity method, our analysis treats the computation of centers of gravity in $\RR^m$ as free and counts only oracle calls. Replacing them with implementable centers, as in Vaidya's method \citep{vaidya1996new} or random-walk centroids \citep{bertsimas2004solving}, is a natural next step.

\paragraph{Related work.}
Nested schemes for problems with a low-dimensional block were analyzed by \citet{gladin2021solving,gladin2023solving,gladin2022vaidya}, and inexact oracles and accuracy certificates in such schemes by \citet{gladin2021mixed,gladin2023accuracy}. Our localizer $\{U\ge h\}$ is the level set of the level method \citep{lemarechal1995new}. Bundle methods with inexact oracles \citep{kiwiel2006proximal,deoliveira2014convex,deoliveira2014level,vanackooij2014level} and inexact dual decomposition \citep{necoara2014rate} are the classical answer to Lagrangians with inexact inner solutions; they control the error of every inner solve, whereas certificate transport never solves an inner problem to a prescribed accuracy and reuses its partial solution instead. Warm starts across outer iterations are standard in bilevel optimization \citep{ji2021bilevel}, but their analyses need the inner solution map to be Lipschitz, which we do not assume. Nested minimax methods that remove logarithmic factors \citep{lin2020near,kovalev2022optimal} require smoothness in both blocks. \Cref{alg:block} is the method of similar triangles \citep{gasnikov2018universal} with an arbitrary quadratic prior.

\paragraph{Organization.}
\Cref{sec:setting} states the assumptions, \cref{sec:ct,sec:strong} present the two methods, and \cref{sec:experiments} reports experiments. Proofs are in \cref{app:toolbox,app:minmax-general,app:minmax-cross}, experimental details in \cref{app:experiments}, and a detailed discussion of related work in \cref{app:related}.

\section{Setting}\label{sec:setting}

\paragraph{Problem and accuracy.}
Let $\cX\subseteq\RR^n$ be a closed convex set and $\cY\subset\RR^m$ a convex body, that is, a compact convex set with nonempty interior in its affine hull; we assume $m=\dim\aff\cY$. We consider the problem
\[
\min_{x\in\cX}\ \max_{y\in\cY} f(x,y),
\]
where $f(\cdot,y)$ is convex for every $y$ and $f(x,\cdot)$ is concave for every $x$. Let
\[
P(x)\defeq\max_{y\in\cY}f(x,y),\qquad \phi(y)\defeq\min_{x\in\cX}f(x,y).
\]
A candidate pair $(\hat x,\hat y)\in\cX\times\cY$ is measured by the saddle-point gap
\begin{equation}\label{eq:gap}
\Gap(\hat x,\hat y)\defeq P(\hat x)-\phi(\hat y)\ \ge 0,
\end{equation}
which bounds the suboptimality of both $\hat x$ in the primal problem and $\hat y$ in the dual problem.

\paragraph{Split oracles.}
An algorithm accesses $f$ only through two oracles, queried at points of $\cX\times\cY$:
\begin{equation}\label{eq:oracles}
\Ox(x,y)=\nabla_xf(x,y),
\qquad
\Oy(x,y)=\big(f(x,y),\,g\big),\quad g\in\partial_yf(x,y),
\end{equation}
where $\partial_y f$ denotes the superdifferential in $y$. We denote by $\Tx$ and $\Ty$ the numbers of calls to $\Ox$ and $\Oy$. As in information-based complexity \citep{nemirovski1983problem}, all other computations are free: projections onto $\cX$, operations with the finitely many planes and quadratics collected so far, and centers of gravity of convex bodies in $\RR^m$.

\paragraph{Assumptions.}
Our main results hold under the following assumption.

\begin{assumption}\label{ass:minmax}
The set $\cX$ has diameter at most $\Rx$, and $\cY$ has diameter at most $\Ry$. For every $y\in\cY$, the function $f(\cdot,y)$ is $\mu$-strongly convex and $L$-smooth on $\cX$ with $\mu>0$. For every $x\in\cX$, the function $f(x,\cdot)$ is concave on $\cY$, and the supergradients returned by $\Oy$ have norm at most $\My$.
\end{assumption}

We write $\kappa\defeq L/\mu$ and $B\defeq\My\Ry$. The constant $B$ bounds the variation of $f(x,\cdot)$ over $\cY$ and sets the scale of the gap. \Cref{ass:minmax} requires neither smoothness nor strong concavity in $y$, and nothing links the two blocks. The bound $\Rx$ is used only in the initialization (\cref{lem:init}); for $\cX=\RR^n$ it suffices that $\Rx$ bounds the initial distance to $\argmin f(\cdot,y_0)$.

In \cref{sec:strong} we use the following additional assumption.

\begin{assumption}[strong concavity and cross-Lipschitz coupling]\label{ass:cross}
For every $x\in\cX$, the function $f(x,\cdot)$ is $\nu$-strongly concave on $\cY$ with $\nu>0$, and for all $x\in\cX$ and $y,y'\in\cY$,
\[
\nrm{\nabla_xf(x,y)-\nabla_xf(x,y')}\le\ell\,\nrm{y-y'}.
\]
\end{assumption}

We set $\chi\defeq\ell^2/(\mu\nu)$. This quantity is scale-invariant and plays the role of a condition number for the coupling between the blocks.

\paragraph{Baseline.}
Running a cutting-plane method on the concave function $\phi$ and computing each cut by an accelerated method that solves the problem in $x$ to accuracy $\eps$ \citep{gladin2023solving} gives
\begin{equation}\label{eq:nested}
\Tx^{\rm nest}=O\Big(m\sqrt\kappa\,\log\frac B\eps\,\log\frac{\mu\Rx^2+B}{\eps}\Big),\qquad \Ty^{\rm nest}=O\Big(m\log\frac B\eps\Big).
\end{equation}
In the rest of the paper we show how to replace the factor $m\sqrt\kappa$ by $\sqrt{m\kappa}$ and remove the second logarithm.
\section{Certificate transport}\label{sec:ct}

In this section we assume only \cref{ass:minmax}. Nothing is known about $f(x,\cdot)$ beyond concavity and a Lipschitz bound: it may be piecewise linear, and its maximizers may lie on the boundary of $\cY$.

\paragraph{What the algorithm stores.}
The algorithm keeps two kinds of information. The first is a collection of \emph{upper planes} $\ell_i(y)=v_i+\ip{g_i}{y-y_i}$, each obtained from one call $\Oy(u_i,y_i)$. Since $f(u_i,\cdot)$ is concave, $\ell_i\ge f(u_i,\cdot)\ge\phi$ on $\cY$, and therefore
\[
U\defeq\min_i\ell_i\ \ge\ \phi,
\qquad
a\defeq\max_\cY U\ \ge\ \max_\cY\phi .
\]
The second is a single \emph{lower model}
\begin{equation}\label{eq:lowermodel}
f(x,y_b)\ \ge\ q_b(x)\defeq b+\tfrac\sigma2\nrm{x-x_b}^2\quad\text{for all }x\in\cX,\qquad \sigma\defeq\mu/2,
\end{equation}
which certifies $\phi(y_b)\ge b$. By the minimax theorem, some convex combination $\bar x$ of the points $u_i$ satisfies $P(\bar x)\le a$ (\cref{lem:recovery}), hence $\Gap(\bar x,y_b)\le a-b$. The algorithm reduces this \emph{certified width} $a-b$ to $\eps$.

\paragraph{The transport step.}
The key ingredient is the following observation (\cref{lem:block}, proved in \cref{app:toolbox}). Run $K$ steps of an accelerated method on $F=f(\cdot,y_c)$, using the lower model $q_b$ of a \emph{different} slice as a prior (\cref{alg:block}). The method returns two points $u_K,z_K\in\cX$ and, with $A=K(K+3)/(8\kappa)$ and $v=F(u_K)$, guarantees
\begin{equation}\label{eq:transport}
f\big(x,\,\underbrace{(1-\vartheta)y_b+\vartheta y_c}_{y^+}\big)\ \ge\ \underbrace{\frac{b+Av}{1+A}}_{b^+}+\frac\sigma2\nrm{x-z_K}^2\quad\forall x\in\cX,\qquad \vartheta=\frac{A}{1+A}.
\end{equation}
The proof combines the estimate-sequence invariant $b+A\,F(u_K)\le\min_\cX\Psi_K$ with concavity in $y$, namely $f(x,y^+)\ge(1-\vartheta)f(x,y_b)+\vartheta F(x)$. Only one function value, $v=F(u_K)$, is needed, and it comes together with a new upper plane from the same call to $\Oy$.

\begin{algorithm}[t]
\caption{A block of accelerated steps with a prior model: $\textsc{Block}(F,\pi,K)$}\label{alg:block}
\begin{algorithmic}[1]
\Require function $F$, $\mu_F$-strongly convex and $L_F$-smooth on $\cX$; prior model $\pi(x)=c+\frac\gamma2\nrm{x-\bar x}^2$; block length $K\ge1$
\State $A_0\gets0$,\ \ $\Gamma_0\gets\gamma$,\ \ $u_0\gets\bar x$,\ \ $z_0\gets\bar x$,\ \ $r_0\gets\gamma\bar x$
\For{$i=1,\dots,K$}
  \State $a_i\gets\frac{(i+1)\gamma}{2L_F}$,\ \ $A_i\gets A_{i-1}+a_i$,\ \ $\Gamma_i\gets\Gamma_{i-1}+\mu_F a_i$
  \State $x_i\gets\frac{A_{i-1}u_{i-1}+a_iz_{i-1}}{A_i}$
  \State $g_i\gets\nabla F(x_i)$ \Comment{one call to $\Ox$ if $F=f(\cdot,y)$}
  \State $r_i\gets r_{i-1}+a_i(\mu_F x_i-g_i)$,\ \ $z_i\gets\Pi_\cX\big(r_i/\Gamma_i\big)$
  \State $u_i\gets\frac{A_{i-1}u_{i-1}+a_iz_i}{A_i}$
\EndFor
\State \Return $(u_K,\,z_K,\,A_K)$, where $A_K=\frac{\gamma K(K+3)}{4L_F}$
\end{algorithmic}
\end{algorithm}

\begin{algorithm}[t]
\caption{Certificate transport (CT)}\label{alg:ct}
\begin{algorithmic}[1]
\Require $\eps>0$; block length $K\ge1$ (default $K=\lceil\sqrt{8\kappa/m}\,\rceil$)
\State Build a lower model $(b,x_b,y_b)$ and one plane with $a-b\le2B$ (\cref{cor:start})
\State $W\gets a-b$
\State $h\gets b+W/2$
\While{$a-b>\eps$}
  \If{$a-b\le 2W/3$}
    \State $W\gets a-b$, $h\gets b+W/2$ \Comment{new epoch}
  \EndIf
  \State $y_c\gets$ center of gravity of $\{y\in\cY:\ U(y)\ge h\}$
  \State $(u,z,A)\gets\textsc{Block}(f(\cdot,y_c),q_b,K)$ \Comment{$K$ calls to $\Ox$}
  \State $(v,g)\gets\Oy(u,y_c)$;\quad add the plane $\ell(y)=v+\ip{g}{y-y_c}$ \Comment{one call to $\Oy$}
  \If{$v>h$} \Comment{certificate transport}
     \State $b\gets\frac{b+Av}{1+A}$,\ \ $x_b\gets z$,\ \ $y_b\gets\frac{y_b+Ay_c}{1+A}$
  \EndIf
  \State $a\gets\max_\cY U$
\EndWhile
\State \Return $(\bar x,y_b)$, where $\bar x=\sum_i\lambda_iu_i$ and $\lambda\in\argmin_{\lambda\in\Delta}\max_{y\in\cY}\sum_i\lambda_i\ell_i(y)$
\end{algorithmic}
\end{algorithm}

The run is divided into epochs. In an epoch with initial width $W$, the threshold is $h=b+W/2$, and the algorithm queries the center of gravity $y_c$ of the localizer $\{y\in\cY:U(y)\ge h\}$. Every query makes progress in one of two ways. If $v\le h$, the new plane satisfies $\ell(y_c)=v\le h$ and cuts off a half-space whose boundary passes through the center of gravity of the localizer; by Grünbaum's lemma, the volume of the localizer shrinks by a factor of at least $(1-e^{-1})^{-1}$ (\cref{lem:grunbaum}). If $v>h$, the certificate is transported, and $b$ grows by $\vartheta(v-b)>\vartheta W/6$. Within an epoch $b$ can grow by at most $W/3$, so each epoch contains $O(1/\vartheta)=O(1+\kappa/K^2)$ such queries. In both cases the new plane can only decrease $U$, so the volume of the localizer does not grow within an epoch.

The width shrinks by a factor of at least $\frac32$ per epoch, so there are $O(\log\frac B\eps)$ epochs in total. At a change of epoch, the volume of the localizer grows by at most a factor $6^m$, and since $U$ is Lipschitz, the volume never drops below $(\eps/6B)^m\operatorname{vol}\cY$. Comparing these bounds shows that the total number of queries with $v\le h$ is $O(m\log\frac B\eps)$ (\cref{app:minmax-general}). Each query costs $K$ gradients, so the total number of gradients is
\begin{equation}\label{eq:balance}
\Big(\underbrace{m\cdot K}_{\text{cuts}}\;+\;\underbrace{\Big(1+\frac{\kappa}{K^2}\Big)\cdot K}_{\text{transports}}\Big)\cdot O\Big(\log\frac B\eps\Big),
\end{equation}
and the choice $K=\sqrt{\kappa/m}$ balances the two terms at $O(\sqrt{m\kappa}\log\frac B\eps)$.

\begin{theorem}\label{thm:general}
Let \cref{ass:minmax} hold and $0<\eps<2B$. For every $K\ge1$, \cref{alg:ct} returns $(\bar x,\hat y)\in\cX\times\cY$ with $\Gap(\bar x,\hat y)\le\eps$ after
\[
\Ty=O\Big(\Big(m+\frac{\kappa}{K^2}\Big)\log\frac B\eps\Big),
\qquad
\Tx=I_0+O\Big(\Big(mK+\frac{\kappa}{K}\Big)\log\frac B\eps\Big)
\]
oracle calls, where $I_0=O\big(\sqrt\kappa\,[1+\logp\frac{\mu\Rx^2}{B}]\big)$ is the cost of initialization. In particular, for $K=\lceil\sqrt{8\kappa/m}\,\rceil$,
\[
\Tx+\Ty=I_0+O\Big(\big(m+\sqrt{m\kappa}\big)\log\frac{B}{\eps}\Big),
\qquad
\Ty=O\Big(m\log\frac B\eps\Big).
\]
\end{theorem}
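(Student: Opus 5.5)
The proof splits into correctness and counting. For correctness, every plane stored satisfies $\ell_i\ge f(u_i,\cdot)\ge\phi$ on $\cY$, so $a=\max_\cY U\ge\max\phi$. The lower model \eqref{eq:lowermodel} is preserved by each transport step through \cref{lem:block}: the block is run with prior $q_b$, $\gamma=\sigma=\mu/2$ and $\mu_F=\mu$, $L_F=L$, so $A=K(K+3)/(8\kappa)$, and \eqref{eq:transport} gives a model of the same form at $y^+\in\cY$ (a convex combination of two points of $\cY$). Hence $\phi(y_b)\ge b$ always holds. At termination $a-b\le\eps$, and \cref{lem:recovery} provides $\bar x$ with $P(\bar x)\le a$, so $\Gap(\bar x,y_b)\le a-b\le\eps$. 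This argument does not use $K$. Initialization is \cref{cor:start}, which costs $I_0$ gradients and gives $a-b\le 2B$.

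For counting, I fix an epoch with initial width $W$ and threshold $h=b_0+W/2$. The epoch lasts while $a-b>2W/3$. Since $a$ is nonincreasing and $b$ nondecreasing, during the epoch $b\le b_0+W/3$ (otherwise $a-b\le W-W/3$), and $a\ge b+2W/3\ge h+W/6$. The localizer $\{U\ge h\}$ is therefore nonempty and contains a superlevel set of the $\My$-Lipschitz function $U$ around its maximizer. On a transport query, $v>h\ge b+W/6$, so $b$ grows by $\vartheta(v-b)>\vartheta W/6$. Because the total growth is at most $W/3$, there are at most $2/\vartheta+1=O(1+\kappa/K^2)$ transports per epoch. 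Epochs shrink the width by $3/2$ starting from $2B$, giving $O(\log\frac B\eps)$ epochs and $O((1+\kappa/K^2)\log\frac B\eps)$ transports in total.

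For cuts, a query with $v\le h$ adds a plane with $\ell(y_c)\le h$. The new localizer lies in the half-space $\{\ip{g}{y-y_c}\ge0\}$ through the centroid, so by \cref{lem:grunbaum} the volume contracts by $1-e^{-1}$. Transports also add planes, which only shrink the localizer. I would use the potential $\log\vol\{U\ge h\}$ and bound it from below. Since $a\ge h+W/6$ and $U$ is $\My$-Lipschitz and concave, $\{U\ge h\}$ contains the homothetic image of $\cY$ about the maximizer $y^\star$ with ratio $W/(6B)$. Thus the volume is at least $(W/6B)^m\vol\cY\ge(\eps/6B)^m\vol\cY$.

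The delicate step is the epoch change. There $h$ moves to $h'=b'+W'/2$ with $W'\le 2W/3$, and $h'$ may be below or above $h$. I would bound the volume jump by comparing $\{U\ge h'\}$ with the ratio-$\rho$ homothety of $\{U\ge h\}$ about $y^\star$. Concavity gives $\{U\ge h\}\supseteq y^\star+\frac{a-h}{a-h'}(\{U\ge h'\}-y^\star)$ when $h'\le h$, and $a-h'\le W'+\dots\le 6(a-h)$ yields the factor $6^m$. When $h'>h$ the volume only decreases. I would telescope the potential as follows: the initial volume is at most $\vol\cY$, each epoch start adds at most $m\log 6$, each cut subtracts $\log\frac{e}{e-1}$, and the final value is at least $m\log(\eps/6B)$. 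This gives $\#\text{cuts}\le O(m\log\frac B\eps)$. Adding the transport count gives $\Ty$. Multiplying by $K$ gradients per query and adding $I_0$ gives $\Tx$. Substituting $K=\lceil\sqrt{8\kappa/m}\rceil$, so that $\kappa/K^2\le m/8$ and $mK\le m+\sqrt{8m\kappa}$, yields the stated bounds.

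The main obstacle I expect is the exact constants in the lower bound on the localizer volume and in the epoch-change factor. I would first try the homothety argument above with $a-h\ge W/6$.
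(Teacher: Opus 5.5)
Your proposal is correct and follows the paper's proof essentially step for step. It uses the same invariants and certificate $\Gap\le a-b$, the bounds $h-b>W/6$ and $a-h>W/6$ giving at most $1+2/\vartheta$ transports per epoch, and the volume potential with the $(\eps/6B)^m$ homothety lower bound, Grünbaum contraction for cuts, and a $6^m$ factor at epoch changes. One bookkeeping point needs tightening: the epoch-change homothety must use the upper model from before the last query of the epoch, where $a-h>W/6$ still holds and $a-h'\le W$ because $h'\ge b'\ge b_0$. Consequently the last cut of each epoch earns no contraction factor, which is why the paper credits only $C-N_{\rm ep}$ cuts. This costs only an additive $N_{\rm ep}=O(\log\frac B\eps)$ and also disposes of the degenerate case $g\perp\aff\cY$.
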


The proof is given in \cref{app:minmax-general}. The correctness of the output does not depend on $K$: the block length affects only the number of iterations.

Compared with the nested scheme \eqref{eq:nested}, \cref{thm:general} improves $\Tx$ by a factor of order $\min\{\sqrt m,\sqrt\kappa\}\cdot\log\frac1\eps$ and keeps the optimal $\Ty$.

\paragraph{The one-dimensional case.}
For $m=1$, the set $\cY$ is a segment and its center of gravity is the midpoint. \Cref{thm:general} then gives the following.

\begin{corollary}\label{cor:onedim}
Let \cref{ass:minmax} hold with $m=1$, and let $0<\eps<2B$. Then \cref{alg:ct} with $K=\lceil\sqrt{8\kappa}\,\rceil$ returns $(\bar x,\hat y)$ with $\Gap(\bar x,\hat y)\le\eps$ after
\[
\Tx=O\Big(\sqrt\kappa\,\Big[1+\logp\frac{\mu\Rx^2}{B}+\log\frac B\eps\Big]\Big),
\qquad
\Ty=O\Big(\log\frac B\eps\Big)
\]
oracle calls.
\end{corollary}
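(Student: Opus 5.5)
This follows by specializing \cref{thm:general} to $m=1$ and $K=\lceil\sqrt{8\kappa}\,\rceil$, which is exactly the default block length $\lceil\sqrt{8\kappa/m}\,\rceil$ of \cref{alg:ct} at $m=1$. The hypotheses match: \cref{ass:minmax} holds and $0<\eps<2B$. \Cref{thm:general} therefore applies directly and returns $(\bar x,\hat y)$ with $\Gap(\bar x,\hat y)\le\eps$. When $m=1$, $\cY$ is a segment in its one-dimensional affine hull, so the center-of-gravity step of \cref{alg:ct} is the midpoint of the localizer, which is a subsegment. This is only an interpretation of the algorithm and does not change the analysis.

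Next I substitute into the two bounds. Since $K\ge\sqrt{8\kappa}$, we have $\kappa/K^2\le 1/8$, so
\[
\Ty=O\big((1+\tfrac18)\log\tfrac B\eps\big)=O(\log\tfrac B\eps).
\]
For $\Tx$, I use $K\le\sqrt{8\kappa}+1$ together with $\kappa\ge1$ (from $\mu\le L$), which gives $K=O(\sqrt\kappa)$. I also use $\kappa/K\le\kappa/\sqrt{8\kappa}=O(\sqrt\kappa)$. Together these give $mK+\kappa/K=O(\sqrt\kappa)$, so the main term is $O(\sqrt\kappa\log\frac B\eps)$. Adding $I_0=O(\sqrt\kappa[1+\logp\frac{\mu\Rx^2}{B}])$ and factoring out $\sqrt\kappa$ yields the stated bound on $\Tx$.

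No step here is a real obstacle. The only point to check is that the constants hidden in the $O(\cdot)$ of \cref{thm:general} do not depend on $\kappa$, $m$ or $K$, so that the substitution is legitimate. The rounding in $K$ also needs care: it costs at most an additive $1\le\sqrt\kappa$, which is absorbed because $\kappa\ge1$. Optimality of these counts is not part of the statement, so I do not prove it here.
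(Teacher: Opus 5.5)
Your proof is correct and matches the paper's: the paper also just specializes \cref{thm:general} to $m=1$ and inserts $I_0$. The only cosmetic difference is that the paper uses the default-$K$ bound directly, with $1+\sqrt\kappa\le2\sqrt\kappa$ (since $\kappa\ge1$). You instead substitute $K=\lceil\sqrt{8\kappa}\,\rceil$ into the general-$K$ bound, which is the same computation.
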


Both bounds match, in order, the known lower bounds for each block separately \citep{nemirovski1983problem,nesterov2018lectures} (for $\Tx$, when $n$ is large enough). Thus a merely concave, nonsmooth one-dimensional $y$-block costs nothing beyond its own complexity.

\paragraph{Choosing $K$ for a weighted cost.}
Suppose a call to $\Ox$ costs $c_x$, a call to $\Oy$ costs $c_y$, and let $r=c_y/c_x$. Up to the initialization term, \cref{thm:general} bounds the total cost $c_x\Tx+c_y\Ty$ by $c_x\,g(K)\log\frac B\eps$, where
\[
g(K)=\Big(\alpha+\frac{\beta}{K^2}\Big)(K+r),\qquad \alpha\asymp m,\quad \beta\asymp\kappa .
\]

\begin{proposition}\label{pr:3opt}
For all $K>0$, $g(K)\ge\max\{\alpha r,\,2\sqrt{\alpha\beta}\}$. For $K_0=\sqrt{\beta/\alpha}$,
\[
g(K_0)=2\sqrt{\alpha\beta}+2\alpha r\ \le\ 3\min_{K>0}g(K).
\]
\end{proposition}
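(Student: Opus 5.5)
The plan is elementary calculus on $g(K)=(\alpha+\beta/K^2)(K+r)$ with $\alpha,\beta>0$ and $r\ge0$. I would expand
\[
g(K)=\alpha K+\frac{\beta}{K}+\alpha r+\frac{\beta r}{K^2}
\]
and treat all four terms as nonnegative. This gives the two lower bounds at once. Dropping everything except $\alpha r$ yields $g(K)\ge\alpha r$. Dropping $\alpha r$ and $\beta r/K^2$ and applying AM--GM to the first two terms yields $g(K)\ge\alpha K+\beta/K\ge2\sqrt{\alpha\beta}$. Hence $g(K)\ge\max\{\alpha r,2\sqrt{\alpha\beta}\}$ for every $K>0$.

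Next I would evaluate $g$ at $K_0=\sqrt{\beta/\alpha}$. Here $\beta/K_0^2=\alpha$, so
\[
g(K_0)=2\alpha(K_0+r)=2\sqrt{\alpha\beta}+2\alpha r .
\]

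For the factor three I would bound this sum by the lower bound. Writing $M=\max\{\alpha r,2\sqrt{\alpha\beta}\}$, we have $2\sqrt{\alpha\beta}\le M$ and $2\alpha r\le 2M$, so $g(K_0)\le 3M\le 3\min_{K>0}g(K)$. Since $g(K)\to\infty$ as $K\to0^+$ and as $K\to\infty$, and $g$ is continuous, the minimum is attained; alternatively, the statement can be read with an infimum and the argument is unchanged.

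There is no real obstacle. The only point to get right is combining the terms as $M+2M$ rather than $M+M$: the $2\alpha r$ term genuinely carries a factor two, which is why the constant is three and not two.
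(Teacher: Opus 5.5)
Your proof is correct and follows the paper's argument. You use the same two lower bounds, $g(K)\ge\alpha(K+r)\ge\alpha r$ and $g(K)\ge\alpha K+\beta/K\ge2\sqrt{\alpha\beta}$, and the same evaluation $g(K_0)=2\alpha(K_0+r)$. You also spell out the final combination $g(K_0)\le M+2M=3M$ with $M=\max\{\alpha r,2\sqrt{\alpha\beta}\}$, which the paper leaves implicit.
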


\begin{proof}
First, $g(K)\ge\alpha(K+r)\ge\alpha r$. Second, $g(K)\ge(\alpha+\beta/K^2)K=\alpha K+\beta/K\ge2\sqrt{\alpha\beta}$. Substituting $K_0$ gives $\alpha+\beta/K_0^2=2\alpha$ and $g(K_0)=2\alpha(K_0+r)$.
\end{proof}

Thus the default block length $K\asymp\sqrt{\kappa/m}$ minimizes the upper bound $g$ up to a factor of three, whatever the relative cost of the two oracles; the guarantee concerns the bound, not the actual cost on a given problem. If $\kappa<m$, then $K_0<1$ and one takes $K=1$.

\section{Strong concavity: a gradient count independent of $m$}\label{sec:strong}

Under \cref{ass:cross}, one can reverse the roles of the blocks: accelerate on the primal function $P$ and use the small block only to build inexact models of it. The key inequality is a two-sided model of $P$. If $y_i$ is a $\delta$-maximizer of $f(v_i,\cdot)$, then for all $x\in\cX$
\begin{equation}\label{eq:model}
f(x,y_i)\ \le\ P(x)\ \le\ f(x,y_i)+\gamma_\chi\nrm{x-v_i}^2+2\delta,\qquad \gamma_\chi=\ell^2/\nu=\mu\chi,
\end{equation}
so $P$ admits a smooth inexact model in the sense of \citet{devolder2014first,stonyakin2021inexact}. An outer accelerated method with parameter $\Gamma=\mu+\gamma_\chi$ needs $O(\sqrt{1+\chi})$ steps to halve the error. Each step solves a proximal subproblem with condition number $1+L/\Gamma$, which costs $O(\sqrt{1+\kappa/(1+\chi)})$ gradients. The product, $O(\sqrt{\kappa+\chi})$, does not depend on $m$. The subproblems are \emph{not} solved to a prescribed accuracy: the inner run of \cref{alg:block} returns two points that satisfy an exact inequality, which is plugged directly into the outer potential. The method is stated as \cref{alg:cross} in \cref{app:minmax-cross}, together with the proof of the following theorem.

\begin{theorem}\label{thm:cross}
Let \cref{ass:minmax,ass:cross} hold, and let $0<\eps<B$. Put $\bar\chi=1+\chi$ and $\Lambda=1+\log\frac{B\bar\chi}{\eps}$. \Cref{alg:cross}, with the problems in $y$ solved by the center-of-gravity method, returns $(\hat x,\hat y)$ with $\Gap(\hat x,\hat y)\le\eps$ after
\[
\Tx=I_0+O\big(\sqrt{\kappa+\chi}\;\Lambda\big),
\qquad
\Ty=O\big(m\sqrt{1+\chi}\;\Lambda^2\big)
\]
oracle calls. If the problems in $y$ are solved by another method that returns a $\delta$-maximizer after $N_y(\delta)$ calls, then $\Ty=O\big(\sqrt{1+\chi}\,\Lambda\cdot N_y(\eps/(C\bar\chi^{5/2}))\big)$ for an absolute constant $C$.
\end{theorem}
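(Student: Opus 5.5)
The proof of \cref{thm:cross} combines an outer accelerated method on the primal function $P$, driven by the inexact two-sided model \eqref{eq:model}, with the inner \cref{alg:block} and center-of-gravity solves in $y$. The first step is to prove \eqref{eq:model}. The lower bound $f(x,y_i)\le P(x)$ is trivial. For the upper bound, let $y^*(x)$ denote the maximizer of the $\nu$-strongly concave function $f(x,\cdot)$. By the cross-Lipschitz bound, $y^*$ is $(\ell/\nu)$-Lipschitz. We then write $P(x)-f(x,y_i)=[f(x,y^*(x))-f(x,y^*(v_i))]+[f(x,y^*(v_i))-f(x,y_i)]$. To control both brackets, we use the function $\psi(x)=f(x,y)-f(x,y')$. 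Its gradient is $\ell\|y-y'\|$-Lipschitz in $x$, and $\psi$ is close to $\psi(v_i)$ plus a linear term. Strong concavity converts the $\delta$-optimality of $y_i$ at $v_i$ into $\|y_i-y^*(v_i)\|^2\le2\delta/\nu$. Young's inequality then absorbs the linear terms and yields $\gamma_\chi\|x-v_i\|^2+2\delta$. It follows that $P$ is $\mu$-strongly convex and has a $(\Gamma,2\delta)$-inexact smooth model with $\Gamma=\mu+\gamma_\chi$, in the sense of \citet{devolder2014first}.

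Next we analyze \cref{alg:cross} as an inexact accelerated proximal-point, or similar-triangles, method on $P$ with stepsize parameter $\Gamma$ and strong convexity $\mu$. We use the potential $A_k(P(u_k)-P^*)+\frac{\mu+A_k\mu}{2}\|z_k-x^*\|^2$. The standard argument gives linear convergence with ratio $1-\Theta(\sqrt{\mu/\Gamma})=1-\Theta(1/\sqrt{1+\chi})$, so the error halves every $O(\sqrt{1+\chi})$ outer steps. The inexactness $\delta$ enters additively and accumulates as $O(\sqrt{1+\chi}\,\delta)$ in the fast-method manner. Choosing $\delta=\eps/(C\bar\chi^{5/2})$ keeps it harmless; the power $5/2$ comes from this accumulation combined with converting function error into the gap.

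Each outer step minimizes the model $f(\cdot,y_i)+\gamma_\chi\|\cdot-v_i\|^2$ plus the estimate-sequence quadratic. We do not solve this subproblem accurately. Instead, we run \cref{alg:block} with the outer quadratic as the prior for $K=\Theta(\sqrt{1+\kappa/\bar\chi})$ steps, which is enough to reach $A=\Theta(1)$ relative to the prior. As in \cref{lem:block}, the two returned points satisfy an exact estimate-sequence inequality, and this inequality replaces the exact prox step in the outer potential. The gradient cost per outer step is therefore $O(\sqrt{1+\kappa/\bar\chi})$. Multiplying by $O(\sqrt{\bar\chi}\Lambda)$ outer steps gives $\Tx=O(\sqrt{\kappa+\chi}\,\Lambda)$. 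The initialization term $I_0$ comes from \cref{lem:init}, which brings the starting error down to $O(B)$.

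For $\Ty$, each outer step requires a $\delta$-maximizer of $f(v_i,\cdot)$, and one more maximizer is needed to certify the output point $\hat y$. The center-of-gravity method applied to a concave $M_y$-Lipschitz function produces such a point in $N_y(\delta)=O(m\log(B/\delta))=O(m\Lambda)$ calls, since $\log\bar\chi^{5/2}=O(\Lambda)$. Multiplying by $O(\sqrt{\bar\chi}\Lambda)$ outer steps gives $O(m\sqrt{\bar\chi}\Lambda^2)$ calls in total. For the gap, we take $\hat x=u_N$ and let $\hat y$ be a $\delta$-maximizer built from the dual information of the accelerated method, chosen so that $\phi(\hat y)\ge P^*-O(\eps)$. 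I expect this gap certification, rather than bounding $P(\hat x)-P^*$ alone, to be the main obstacle. My first attempt would use strong concavity: the point $\hat y\approx y^*(\hat x)$ satisfies $P^*-\phi(y^*(\hat x))\le\bar\chi\,(P(\hat x)-P^*)$, which again follows from the Lipschitz continuity of $y^*$ and $\mu$-strong convexity. This bound costs an extra factor of $\bar\chi$ in accuracy, which only adds $O(\log\bar\chi)\subseteq O(\Lambda)$ iterations.
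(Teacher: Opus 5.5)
This is essentially the paper's proof: the same two-sided model (\cref{lem:model}), an outer accelerated scheme on $P$ whose potential receives the exact two-point inequality from \cref{alg:block} in place of an accurate prox step (\cref{lem:twopoint,lem:potential}), center-of-gravity solves in $y$, and a strong-concavity conversion of primal accuracy into the full gap (\cref{lem:gapcross}). The paper differs only in bookkeeping, none of which changes $\Tx$ or $\Ty$ beyond constants inside $\Lambda$:
\begin{itemize}
\item Instead of a $\mu$-weighted potential with fixed $\delta$, it restarts a convex-case potential every $K=O(\sqrt{\bar\chi})$ steps. It takes $\delta$ proportional to the current error and shrinks the outer weights by $\theta$ to pay for the inexact inner solve, a compensation your sketch does not mention.
\item Its gap lemma loses a factor $O(\bar\chi^2)$ rather than your $\bar\chi$, which is why $\delta$ carries $\bar\chi^{5/2}=\sqrt{\bar\chi}\cdot\bar\chi^2$. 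That lemma also handles the $\delta$-maximizer $\hat y$ directly, whereas you only treat $\hat y\approx y^\star(\hat x)$.
\item Your conversion step cites only Lipschitz continuity of $y^\star$ and $\mu$-strong convexity. It also needs the $x$-side bound $f(x^\star,y)-\phi(y)\le\frac{\ell^2}{2\mu}\nrm{y-y^\star}^2$, which the paper proves in \cref{lem:gapcross}.
\end{itemize}
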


\begin{corollary}\label{cor:combined}
Under the assumptions of \cref{thm:cross}, running the better of \cref{alg:ct,alg:cross} gives
\[
\Tx+\Ty=I_0+O\Big(\min\Big\{(m+\sqrt{m\kappa})\,\Lambda,\ \ \sqrt\kappa\,\Lambda+m\sqrt{1+\chi}\,\Lambda^2\Big\}\Big).
\]
In particular, if $\chi=O(1)$, then $\Tx=I_0+O(\sqrt\kappa\,\Lambda)$ and $\Ty=O(m\Lambda^2)$, so the complexity is additive up to one logarithm in the $y$-term. Up to logarithms, \cref{alg:cross} is preferable to \cref{alg:ct} exactly when $\chi\lesssim\kappa/m$. If only $\Tx$ counts, which is natural when a gradient in $x$ is a backward pass through a model, \cref{alg:cross} is preferable whenever $\chi\lesssim m\kappa$.
\end{corollary}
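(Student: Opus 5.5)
The plan is to treat \cref{cor:combined} as a bookkeeping consequence of \cref{thm:general,thm:cross}, followed by a comparison of the two bounds; no new analysis of either algorithm should be needed. First, I will put both guarantees in terms of the same logarithm. Since $\bar\chi\ge1$, we have $\log\frac B\eps\le\Lambda$, so \cref{thm:general} with $K=\lceil\sqrt{8\kappa/m}\,\rceil$ gives $\Tx+\Ty\le I_0+O((m+\sqrt{m\kappa})\Lambda)$. For \cref{thm:cross} I use $\sqrt{\kappa+\chi}\le\sqrt\kappa+\sqrt\chi$ and $\sqrt\chi\,\Lambda\le m\sqrt{1+\chi}\,\Lambda^2$, which holds because $m\ge1$ and $\Lambda\ge1$. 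Together these give $\Tx+\Ty\le I_0+O(\sqrt\kappa\,\Lambda+m\sqrt{1+\chi}\,\Lambda^2)$. The two initialization terms $I_0$ are of the same order, since both come from \cref{lem:init}, so I write a single $I_0$.

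Next comes the meaning of ``running the better of''. If the constants are known, both bounds are computable in advance, and one simply runs the algorithm with the smaller bound; this gives the minimum. If instead one wants a choice that does not depend on the problem, I will interleave the two runs step by step and stop as soon as either has a certified gap at most $\eps$. \Cref{alg:ct} has a computable certificate $a-b$. For \cref{alg:cross} I would use its computable certificate, or else the known bound on its iteration count. Interleaving at most doubles the cost, so the minimum survives in $O(\cdot)$.

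For the particular case $\chi=O(1)$, $\sqrt{\kappa+\chi}=O(\sqrt\kappa)$ because $\kappa\ge1$, and $\sqrt{1+\chi}=O(1)$. This gives $\Tx=I_0+O(\sqrt\kappa\Lambda)$ and $\Ty=O(m\Lambda^2)$ directly from \cref{thm:cross}.

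The comparison statements are the main obstacle, because they hold only up to logarithms and up to the additive $m$ terms, so I must say exactly what is being compared. I drop the log factors and compare the dominant oracle counts.

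For the total count, the $x$-part of \cref{alg:cross} costs $\sqrt{\kappa+\chi}$ against $\sqrt{m\kappa}$. Since $\kappa\le m\kappa$, the term $\sqrt\kappa$ is never worse. The remaining $y$-term $m\sqrt{1+\chi}$ must be compared with $m+\sqrt{m\kappa}$. It is at most $\sqrt{m\kappa}$ exactly when $m^2(1+\chi)\lesssim m\kappa$, that is, $\chi\lesssim\kappa/m$, assuming $\kappa\gtrsim m$. Otherwise the $m$ term of \cref{alg:ct} dominates, and the $y$-term is at most $m$ exactly when $\chi=O(1)$, which is again within $\chi\lesssim\kappa/m$. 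I would make this case split explicit.

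For $\Tx$ alone, compare $\sqrt{\kappa+\chi}$ with $mK+\kappa/K\asymp\sqrt{m\kappa}$ (or $m$ when $\kappa<m$). Then $\sqrt{\kappa+\chi}\lesssim\sqrt{m\kappa}$ if and only if $\chi\lesssim m\kappa$, using $\kappa\le m\kappa$. This gives the last claim.
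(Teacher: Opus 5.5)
Your route is the same as the paper's. The paper gives no separate proof of \cref{cor:combined}; it is read off from \cref{thm:general,thm:cross}. Your reductions are exactly what is needed: $1+\log\frac B\eps\le\Lambda$, $\sqrt{\kappa+\chi}\le\sqrt\kappa+\sqrt\chi$, $\sqrt\chi\,\Lambda\le m\sqrt{1+\chi}\,\Lambda^2$, a single common $I_0$, the specialization to $\chi=O(1)$, and the comparison when only $\Tx$ counts. Interleaving also works, and no certificate for \cref{alg:cross} is needed: its schedule of outer blocks and center-of-gravity queries is fixed in advance, so it stops after a known number of calls.

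The one step that fails is the subcase $\kappa\lesssim m$ of the ``exactly when'' comparison. There you claim that $\chi=O(1)$ ``is again within $\chi\lesssim\kappa/m$'', but the inclusion runs the other way. When $\kappa<m$, $\chi\lesssim\kappa/m$ implies $\chi\lesssim1$, yet $\chi=O(1)$ does not imply $\chi\lesssim\kappa/m$ once $\kappa\ll m$. For example, take $\kappa=\chi=1$ and large $m$. Both bounds are then $\Theta(m)$ up to logarithms, so \cref{alg:cross} is not worse, but $\chi\le C\kappa/m$ fails for every fixed $C$ once $m>C$.

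A one-line argument gives the correct threshold without a case split. Up to logarithms, the bound for \cref{alg:cross} is $\asymp\sqrt\kappa+m\sqrt{1+\chi}$, and the bound for \cref{alg:ct} is $\asymp m+\sqrt{m\kappa}\ge\sqrt\kappa$. So the former is $\lesssim$ the latter if and only if $m\sqrt{1+\chi}\lesssim m+\sqrt{m\kappa}$, that is, if and only if $\chi\lesssim\max\{1,\kappa/m\}$. This coincides with $\chi\lesssim\kappa/m$ precisely when $\kappa\gtrsim m$, which is the regime the paper has in mind (for $\kappa<m$ it takes $K=1$). You should either state that restriction explicitly or prove the biconditional with the corrected threshold, rather than claim it for all $\kappa$.
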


\paragraph{What the two-point method buys.}
A gradient count independent of $m$ alone is not new: by \eqref{eq:model}, an accelerated method on $P$ with the inexact gradient $\nabla_xf(v,\tilde y)$, where $\tilde y$ is a $\delta$-maximizer of $f(v,\cdot)$, is an inexact first-order method in the sense of \citet{devolder2014first} and also gives $\Tx=O(\sqrt{\kappa+\chi}\,\Lambda)$ up to logarithms, but it solves a problem in $y$ at each of its $O(\sqrt{\kappa+\chi})$ steps per halving of the error. \Cref{alg:cross} solves only $O(\sqrt{1+\chi})$ of them (\cref{tab:strong}); $\sqrt{\kappa+\chi}$ is the $x$-part of the lower bound of \citet{zhang2022lower}. The independence of $m$ is for a fixed $\chi$, which in natural examples may grow with $m$ through the Jacobian of the group losses.

\begin{table}[t]
\centering
\caption{Oracle calls up to constants and the initialization term $I_0$, with $\Lambda$ as in \cref{thm:cross}. Second row: reverse nested scheme, up to logarithms.}\label{tab:strong}
\small
\begin{tabular}{lcc}
\toprule
Method & $\Tx$ & $\Ty$ \\
\midrule
Certificate transport (\cref{thm:general}) & $(m+\sqrt{m\kappa})\,\Lambda$ & $m\Lambda$ \\
Accelerated method on $P$ + cutting planes in $y$ & $\sqrt{\kappa+\chi}\,\Lambda$ & $m\sqrt{\kappa+\chi}\,\Lambda^2$ \\
Two-point method (\cref{alg:cross}, \cref{thm:cross}) & $\sqrt{\kappa+\chi}\,\Lambda$ & $m\sqrt{1+\chi}\,\Lambda^2$ \\
\bottomrule
\end{tabular}
\end{table}

\section{Experiments}\label{sec:experiments}

The analysis predicts where certificate transport helps. A nested scheme pays for every inner solve, and even with warm starts this cost grows with $\kappa$ and with the distance between consecutive slice solutions; CT pays a fixed $K_0+1\approx\sqrt{8\kappa/m}$ gradients per query. The advantage should therefore appear with \emph{many dual variables} and \emph{weak regularization}.

\paragraph{Setup.}
We use group DRO, learning with a few constraints and Neyman--Pearson classification on Adult and 20~Newsgroups, and synthetic least-squares group DRO; $f(x,y)=\sum_jw_j(y)L_j(x)+\frac\mu2\nrm x^2+\psi(y)$ with $w$ affine in $y$, and the ridge parameter sets $\kappa$. \emph{CT} is \cref{alg:ct} with the default $K_0$ and three changes that keep \cref{thm:general} valid (\cref{app:exp-methods}): the largest step sizes allowed by the proof of \cref{lem:block}, $\sigma=\mu$, and a direct certificate on the queried slice (one extra gradient per query). Baselines: the textbook nested scheme \eqref{eq:nested} (cold start, inner accuracy $\eps/2$); warm-started inner runs; warm starts with inner accuracy proportional to the certified width; and, the strongest competitor, the latter with the localizer and epochs of \cref{alg:ct} (``level'', an inexact level method \citep{lemarechal1995new}), whose fraction $c$ is tuned over $\{0.03,0.1,0.3,0.5,1\}$ in hindsight. Extragradient and GDA with multiplicative weights are tuned on a grid. Reported gaps are computed independently and rigorously; the true gap never exceeded the certified width (\cref{app:experiments}).

\begin{figure}[t]
\centering
\includegraphics[width=\linewidth]{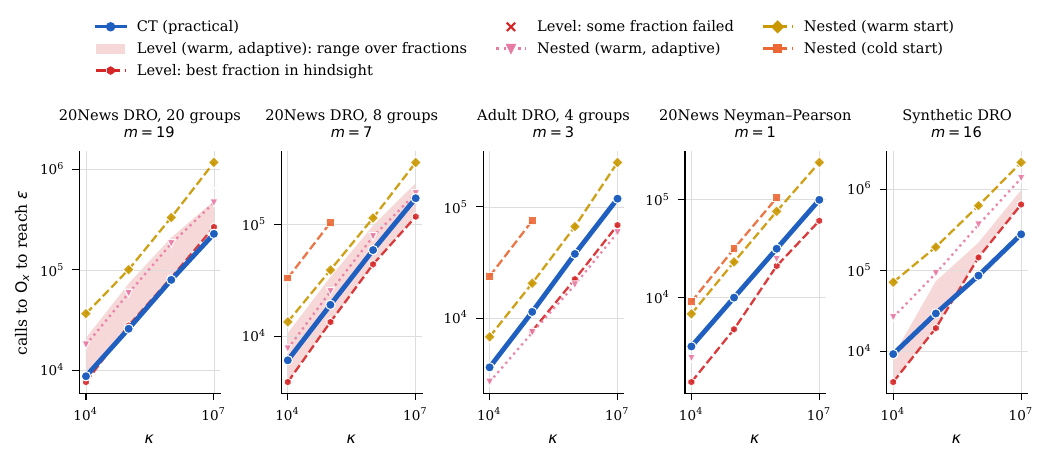}
\caption{Calls to $\Ox$ to certify $\Gap\le10^{-6}$ as $\kappa$ grows, panels ordered by decreasing $m$. Band: the level baseline over its fractions $c$; dashed: its best $c$ for each $\kappa$. CT has no tuned parameter.}
\label{fig:weak}
\end{figure}

\paragraph{Many groups, weak regularization (\cref{fig:weak}).}
For $m\ge16$ and $\kappa\ge10^6$ CT beats every configuration of the nested and level baselines (\cref{tab:weak,fig:weakconv}). On the 20-group problem at $\kappa=10^7$ it needs $228$k calls to $\Ox$ and $88$ to $\Oy$; the level baseline needs $267$k and $1{,}082$ with its best fraction and $402$k with the default one, and the nested schemes $465$k--$1.2$M. On the synthetic instance with $m=16$ the numbers are $278$k for CT and $647$k for the best level fraction. For smaller $\kappa$ or $m\le7$ the tuned level baseline is faster, by $1.3$--$2.3\times$: CT's cost per query decays like $1/\sqrt m$, that of a warm-started solve does not. No fixed fraction is both fast and reliable (\cref{tab:levelfrac}): relative to CT the geometric-mean ratio of $\Tx$ is $2.0,1.2,1.1,0.91,1.07$ for $c=0.03,\dots,1$, but $c=0.5$ failed to certify the target once and $c=1$ in $8$ of $17$ cells.

\begin{figure}[t]
\centering
\includegraphics[width=\linewidth]{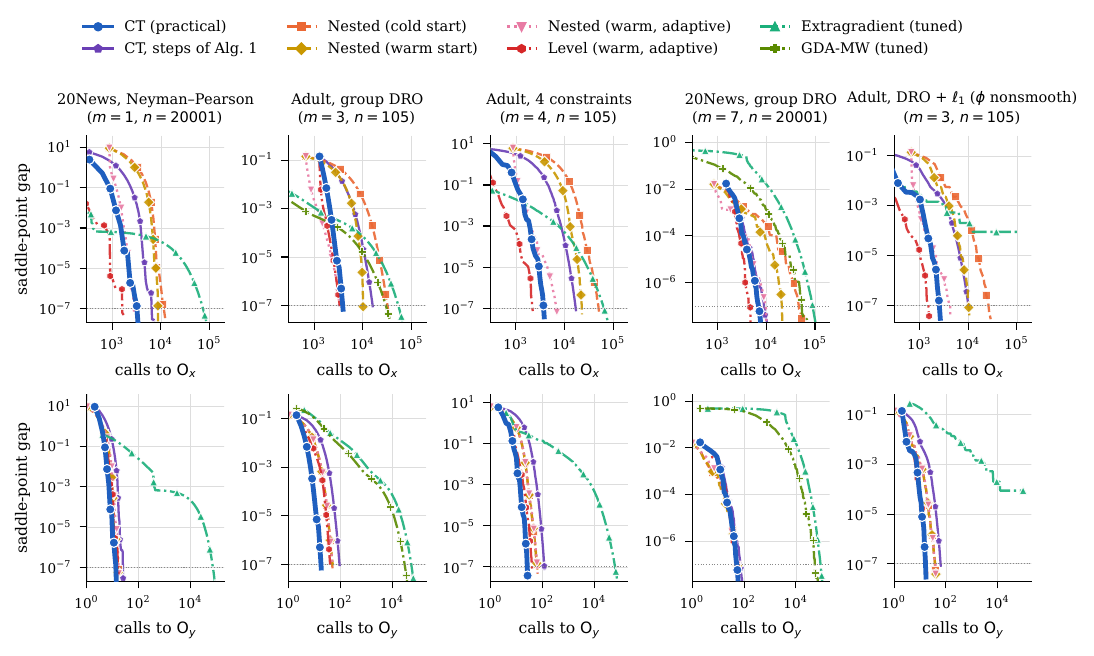}
\caption{Gap versus calls to $\Ox$ (top) and $\Oy$ (bottom) on five real problems, $\kappa=10^4$, target $\eps=10^{-7}$ (dotted). The last problem has an $\ell_1$ penalty on the group weights, which makes $\phi$ nonsmooth.}
\label{fig:real}
\end{figure}

\paragraph{Strong regularization, few groups (\cref{fig:real}).}
At $\kappa=10^4$ CT needs $2.7$--$7.0$k gradients, $3.3$--$13\times$ fewer than the textbook nested scheme and $2.5$--$6\times$ fewer than the warm-started one; the tuned level baseline is $1.2$--$2.2\times$ faster. Single-loop methods need $8$--$21\times$ more gradients and over $10^3\times$ more $y$-queries; on the nonsmooth $\ell_1$ problem extragradient does not reach $10^{-5}$ in $10^5$ calls. The second logarithm of \eqref{eq:nested} is visible directly: the cost of CT per digit of accuracy is constant, while for the textbook scheme it grows from $1.6$k to $3.6$k (\cref{fig:eps}); in $m$ the fitted exponent of $\Tx$ is $0.61$ for CT versus $1.16$ for the nested scheme (\cref{fig:scaling}).

\begin{figure}[t]
\centering
\includegraphics[width=\linewidth]{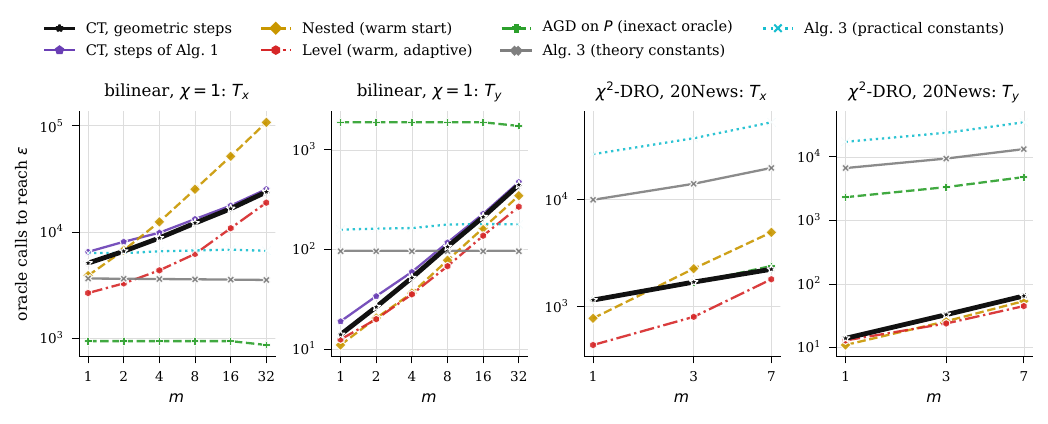}
\caption{Strongly concave problems, $\eps=10^{-6}$. Left: bilinear problem with coupling $\chi=1$ independent of $m$. Right: $\chi^2$-regularized group DRO on 20~Newsgroups, where the certified $\chi=1.5$--$6.2\cdot10^4$ grows with the number of groups. $\Ty$ counts $y$-subproblems.}
\label{fig:strong}
\end{figure}

\paragraph{Strong concavity (\cref{fig:strong}).}
On the bilinear problem \cref{alg:cross} needs $3.6$k calls to $\Ox$ for every $m\le32$, as \cref{thm:cross} predicts, while CT grows as $m^{0.39}$. The independence of $m$ is not specific to \cref{alg:cross}: AGD on $P$ with inexact gradients needs $0.9$k calls. What \cref{alg:cross} saves is $y$-subproblems, $96$ versus $1{,}900$, in line with $\sqrt{1+\chi}$ versus $\sqrt{\kappa+\chi}$ per halving of the error. On $\chi^2$-DRO the large $\chi$ makes \cref{alg:cross} cost $10$--$53$k calls, while CT, which ignores strong concavity, needs $1.2$--$2.2$k, as few as AGD on $P$ with $70$--$170\times$ fewer $y$-queries.

\takeaway{With many dual variables and weak regularization CT is the fastest method, by up to $2.3\times$ in gradients and $12\times$ in $y$-queries over the best-tuned heuristic and $2$--$8\times$ over nested schemes, with no tuning and a certified gap. With few dual variables and strong regularization a tuned warm-started level method is faster.}

\section{Conclusion}\label{sec:conclusion}

Certificate transport removes the multiplicative cost of nested schemes while asking nothing of the small block beyond concavity and Lipschitz continuity: an inexact inner solution either cuts the localizer or transports the lower model. The product $m\sqrt\kappa$ becomes $\sqrt{m\kappa}$, the second logarithm disappears, and for $m=1$ both counts are optimal; under strong concavity, a two-point method makes the gradient count independent of $m$.

\paragraph{What the certificate gives.}
For group DRO, $a-b\le\eps$ certifies the regularized worst-group risk $P(\bar x)=\max_jL_j(\bar x)+\frac\mu2\nrm{\bar x}^2$ up to $\eps$; for constraints with multipliers in $[0,\bar y]^m$, it certifies the problem with the exact penalty $\bar y\sum_j\max\{0,g_j(x)\}$ on the violations $g_j(x)$.

\paragraph{Limitations.}
We assume exact oracles, full gradients in $x$ and known $L,\mu$, and treat centers of gravity as free; for larger $m$ they must be approximated (\cref{rem:implementation}), as exact computation is hard \citep{rademacher2007approximating}. If $\Oy$ returns values with error $\delta_y$, shifting the planes up and the transported value down by $\delta_y$ keeps all certificates valid, with attainable width of order $\delta_y$. Optimality of $\sqrt{m\kappa}$ for $m>1$ is open; the separate lower bounds give $\Omega((\sqrt\kappa+m)\log\frac1\eps)$.

\clearpage
\appendix
\section{Tools: accelerated blocks, initialization, centers of gravity, and recovery}\label{app:toolbox}

Throughout the appendices, $\cX\subseteq\RR^n$ is closed and convex, $\proj_\cX$ is the Euclidean projection onto $\cX$, and $\cY$ is a convex body. All statements about a function $F$ on $\cX$ use only its values and gradients at points of $\cX$.

We will repeatedly use the following property of the projection: if $p\in\RR^n$ and $z=\proj_\cX(p)$, then $\ip{p-z}{x-z}\le0$ for all $x\in\cX$, and therefore
\begin{equation}\label{eq:projection}
\nrm{x-p}^2=\nrm{x-z}^2+2\ip{x-z}{z-p}+\nrm{z-p}^2\ \ge\ \nrm{x-z}^2+\nrm{z-p}^2\qquad\forall x\in\cX .
\end{equation}

\subsection{An accelerated block with an arbitrary quadratic prior}

The block of \cref{alg:block} is an estimate-sequence method in which the usual starting quadratic is replaced by an arbitrary quadratic \emph{prior} $\pi$. The prior enters the analysis only through the inequality $\pi+A_kF\ge c+A_kF(u_k)+\ldots$ below; in \cref{alg:ct} it is the lower model of a \emph{different} slice $f(\cdot,y_b)$.

\paragraph{Setting.}
Let $F:\cX\to\RR$ be $\mu_F$-strongly convex and $L_F$-smooth on $\cX$ with $0\le\mu_F\le L_F$, and let
\[
\pi(x)=c+\frac\gamma2\nrm{x-\bar x}^2,\qquad c\in\RR,\ \ \gamma>0,\ \ \bar x\in\cX .
\]
Every gradient $g=\nabla F(t)$ at a point $t\in\cX$ gives a quadratic lower bound on $F$ by strong convexity:
\begin{equation}\label{eq:omega}
F(x)\ \ge\ \omega_t(x)\defeq F(t)+\ip{g}{x-t}+\frac{\mu_F}2\nrm{x-t}^2\qquad\forall x\in\cX .
\end{equation}
Given nonnegative weights $a_1,a_2,\ldots$ and query points $t_1,t_2,\ldots\in\cX$, put $A_0=0$, $A_i=A_{i-1}+a_i$, and
\[
\Psi_i(x)\defeq\pi(x)+\sum_{j=1}^ia_j\,\omega_{t_j}(x),\qquad \Psi_0=\pi .
\]
Summing \eqref{eq:omega} over $j\le i$ with weights $a_j\ge0$ and adding $\pi$ gives, for every $i$,
\begin{equation}\label{eq:psiupper}
\Psi_i(x)\ \le\ \pi(x)+A_iF(x)\qquad\forall x\in\cX .
\end{equation}

\paragraph{The iteration.}
Start from $u_0=z_0=\bar x$ and, for $i=1,\ldots,k$, compute
\begin{equation}\label{eq:blockiter}
\begin{gathered}
a_i=\frac{(i+1)\gamma}{2L_F},\qquad A_i=A_{i-1}+a_i,\qquad t_i=\frac{A_{i-1}u_{i-1}+a_iz_{i-1}}{A_i},\qquad g_i=\nabla F(t_i),\\
z_i=\argmin_{x\in\cX}\Psi_i(x),\qquad u_i=\frac{A_{i-1}u_{i-1}+a_iz_i}{A_i}.
\end{gathered}
\end{equation}
The points $t_i$ are the points $x_i$ of \cref{alg:block}; we rename them to keep $x$ free for a generic point of $\cX$. By induction, $t_i,u_i,z_i\in\cX$: $z_i\in\cX$ by definition, and $t_i,u_i$ are convex combinations of points of $\cX$.

\begin{lemma}[accelerated block]\label{lem:block}
In the setting above, the following hold.
\begin{enumerate}[label=\textup{(\roman*)},leftmargin=2em,itemsep=1pt]
\item $z_i=\proj_\cX(r_i/\Gamma_i)$, where $\Gamma_i=\gamma+\mu_FA_i$ and $r_i=\gamma\bar x+\sum_{j\le i}a_j(\mu_Ft_j-g_j)$. Hence each step uses exactly one gradient and no function values.
\item $A_k=\gamma k(k+3)/(4L_F)$.
\item For all $x\in\cX$,
\begin{equation}\label{eq:blockineq}
\pi(x)+A_kF(x)\ \ge\ \Psi_k(x)\ \ge\ c+A_kF(u_k)+\frac{\gamma+\mu_FA_k}{2}\nrm{x-z_k}^2 .
\end{equation}
\end{enumerate}
\end{lemma}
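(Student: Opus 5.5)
Parts (i) and (ii) are direct computations, and the real work is the second inequality in (iii). I would prove it by induction on $i$, using the standard estimate-sequence argument for the method of similar triangles with the prior $\pi$ in place of the usual starting quadratic.

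\textbf{Parts (i) and (ii).} $\Psi_i$ is a quadratic with Hessian $(\gamma+\mu_FA_i)I=\Gamma_iI$. Collecting the linear terms gives
\[
\Psi_i(x)=\tfrac{\Gamma_i}{2}\nrm{x}^2-\ip{r_i}{x}+\text{const},
\]
with $r_i=\gamma\bar x+\sum_{j\le i}a_j(\mu_Ft_j-g_j)$. Completing the square, $\Psi_i(x)=\frac{\Gamma_i}{2}\nrm{x-r_i/\Gamma_i}^2+\text{const}$. Its minimizer over $\cX$ is therefore $\proj_\cX(r_i/\Gamma_i)$, and $F$ enters only through $g_j$. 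For (ii), $A_k=\frac{\gamma}{2L_F}\sum_{i=1}^k(i+1)=\frac{\gamma}{2L_F}\cdot\frac{k(k+3)}{2}$.

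\textbf{Part (iii).} The first inequality is \eqref{eq:psiupper}. For the second I would prove, by induction, the invariant
\[
\min_\cX\Psi_i\ \ge\ c+A_iF(u_i).
\]
Combined with the quadratic growth of $\Psi_i$ around its constrained minimizer, this gives the claim. Indeed, $\Psi_i$ is $\Gamma_i$-strongly convex and $z_i$ is its minimizer over the convex set $\cX$. By \eqref{eq:projection} applied to the completed square,
\[
\Psi_i(x)\ge\Psi_i(z_i)+\tfrac{\Gamma_i}{2}\nrm{x-z_i}^2\qquad\forall x\in\cX.
\]
The base case $i=0$ holds since $\min\pi=c$ and $A_0=0$.

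\textbf{Inductive step.} Let $x\in\cX$ be arbitrary. Write
\[
\Psi_i(x)=\Psi_{i-1}(x)+a_i\omega_{t_i}(x)\ \ge\ c+A_{i-1}F(u_{i-1})+\tfrac{\Gamma_{i-1}}{2}\nrm{x-z_{i-1}}^2+a_i\omega_{t_i}(x).
\]
By convexity, $F(u_{i-1})\ge F(t_i)+\ip{g_i}{u_{i-1}-t_i}$. Using $A_{i-1}(u_{i-1}-t_i)+a_i(x-t_i)=a_i(x-z_{i-1})$, which follows from the definition of $t_i$, and dropping the nonnegative $\mu_F$ term, I get
\[
\Psi_i(x)\ge c+A_iF(t_i)+a_i\ip{g_i}{x-z_{i-1}}+\tfrac{\gamma}{2}\nrm{x-z_{i-1}}^2,
\]
since $\Gamma_{i-1}\ge\gamma$. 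Now set $x=z_i$, since $\min_\cX\Psi_i=\Psi_i(z_i)$. Note that $u_i-t_i=\frac{a_i}{A_i}(z_i-z_{i-1})$. Smoothness gives
\[
F(u_i)\le F(t_i)+\frac{a_i}{A_i}\ip{g_i}{z_i-z_{i-1}}+\frac{L_Fa_i^2}{2A_i^2}\nrm{z_i-z_{i-1}}^2 .
\]
So it suffices that $\frac{L_Fa_i^2}{A_i}\le\gamma$.

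\textbf{Main obstacle: the step-size condition.} This inequality is the only delicate point. With $a_i=(i+1)\gamma/(2L_F)$ and $A_i=i(i+3)\gamma/(4L_F)$, it becomes
\[
\frac{(i+1)^2}{i(i+3)}\le1,
\]
i.e. $i^2+2i+1\le i^2+3i$, which holds for $i\ge1$. This closes the induction.
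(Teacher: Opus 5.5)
Your proof is correct and follows the paper's argument essentially step for step. The ingredients match: completing the square for (i), the arithmetic sum for (ii), and for (iii) the induction on $\min_\cX\Psi_i\ge c+A_iF(u_i)$ through quadratic growth at $z_{i-1}$, convexity at $t_i$, smoothness between $t_i$ and $u_i$, and the step condition $L_Fa_i^2\le\gamma A_i\le\Gamma_{i-1}A_i$. The only cosmetic difference is that you first bound $\Psi_i(x)$ for a general $x\in\cX$ and replace $\Gamma_{i-1}$ by $\gamma$ early, then set $x=z_i$; the paper evaluates at $z_i$ from the start.
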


\begin{proof}
\emph{Step 1: $\Psi_i$ is an isotropic quadratic.}
Expanding the squares in $\pi$ and in each $\omega_{t_j}$ and collecting the terms of degree two, one and zero in $x$,
\[
\begin{aligned}
\Psi_i(x)
&=c+\frac\gamma2\nrm{x-\bar x}^2+\sum_{j\le i}a_j\Big[F(t_j)+\ip{g_j}{x-t_j}+\frac{\mu_F}2\nrm{x-t_j}^2\Big]\\
&=\frac{\gamma+\mu_FA_i}{2}\nrm{x}^2-\Big\langle\gamma\bar x+\sum_{j\le i}a_j(\mu_Ft_j-g_j),\,x\Big\rangle+\mathrm{const}_i
=\frac{\Gamma_i}2\nrm{x}^2-\ip{r_i}{x}+\mathrm{const}_i ,
\end{aligned}
\]
where $\mathrm{const}_i$ does not depend on $x$ (it contains the unknown values $F(t_j)$). Completing the square,
\begin{equation}\label{eq:psisquare}
\Psi_i(x)=\Psi_i^\star+\frac{\Gamma_i}2\Big\|x-\frac{r_i}{\Gamma_i}\Big\|^2
\end{equation}
for some constant $\Psi_i^\star$. The minimizer of $\Psi_i$ over $\cX$ is therefore the projection of $r_i/\Gamma_i$ onto $\cX$, which proves (i). For $i=0$ we get $r_0/\Gamma_0=\bar x\in\cX$, so $z_0=\bar x$, in agreement with the initialization.

\emph{Step 2: quadratic growth around $z_i$.}
Apply \eqref{eq:projection} with $p=r_i/\Gamma_i$ and $z=z_i$ to \eqref{eq:psisquare}:
\begin{equation}\label{eq:psigrowth}
\Psi_i(x)\ \ge\ \Psi_i(z_i)+\frac{\Gamma_i}2\nrm{x-z_i}^2=\min_\cX\Psi_i+\frac{\Gamma_i}2\nrm{x-z_i}^2\qquad\forall x\in\cX .
\end{equation}

\emph{Step 3: the weights.}
Since $\sum_{j=1}^i(j+1)=\frac{i(i+1)}2+i=\frac{i(i+3)}2$, we have $A_i=\frac{\gamma}{2L_F}\cdot\frac{i(i+3)}2=\frac{\gamma i(i+3)}{4L_F}$, which proves (ii). Moreover, for $i\ge1$,
\begin{equation}\label{eq:stepcond}
L_Fa_i^2=\frac{\gamma^2(i+1)^2}{4L_F}\ \le\ \frac{\gamma^2i(i+3)}{4L_F}=\gamma A_i\ \le\ \Gamma_{i-1}A_i ,
\end{equation}
because $(i+1)^2=i^2+2i+1\le i^2+3i=i(i+3)$ when $i\ge1$, and $\Gamma_{i-1}=\gamma+\mu_FA_{i-1}\ge\gamma$.

\emph{Step 4: the invariant $c+A_iF(u_i)\le\min_\cX\Psi_i$.}
We argue by induction on $i$. For $i=0$, both sides equal $c$, since $A_0=0$ and $\min_\cX\Psi_0=\pi(\bar x)=c$. Assume the invariant for $i-1$ and evaluate $\Psi_i=\Psi_{i-1}+a_i\omega_{t_i}$ at its minimizer $z_i$:
\[
\min_\cX\Psi_i=\Psi_i(z_i)=\Psi_{i-1}(z_i)+a_i\Big[F(t_i)+\ip{g_i}{z_i-t_i}+\frac{\mu_F}2\nrm{z_i-t_i}^2\Big].
\]
We bound the two terms on the right. By \eqref{eq:psigrowth} for $i-1$ and then the induction hypothesis,
\[
\Psi_{i-1}(z_i)\ \ge\ \min_\cX\Psi_{i-1}+\frac{\Gamma_{i-1}}2\nrm{z_i-z_{i-1}}^2\ \ge\ c+A_{i-1}F(u_{i-1})+\frac{\Gamma_{i-1}}2\nrm{z_i-z_{i-1}}^2 .
\]
In the bracket we drop the nonnegative term $\frac{\mu_F}2\nrm{z_i-t_i}^2$ (recall $a_i\ge0$). Hence
\[
\min_\cX\Psi_i\ \ge\ c+A_{i-1}F(u_{i-1})+a_i\big[F(t_i)+\ip{g_i}{z_i-t_i}\big]+\frac{\Gamma_{i-1}}2\nrm{z_i-z_{i-1}}^2 .
\]
Next, by convexity of $F$, $F(u_{i-1})\ge F(t_i)+\ip{g_i}{u_{i-1}-t_i}$. Substituting and using $A_{i-1}+a_i=A_i$,
\[
\min_\cX\Psi_i\ \ge\ c+A_iF(t_i)+\big\langle g_i,\ A_{i-1}u_{i-1}+a_iz_i-A_it_i\big\rangle+\frac{\Gamma_{i-1}}2\nrm{z_i-z_{i-1}}^2 .
\]
By the definition of $u_i$, $A_{i-1}u_{i-1}+a_iz_i=A_iu_i$, so the inner product equals $A_i\ip{g_i}{u_i-t_i}$ and
\begin{equation}\label{eq:inv-mid}
\min_\cX\Psi_i\ \ge\ c+A_i\big[F(t_i)+\ip{g_i}{u_i-t_i}\big]+\frac{\Gamma_{i-1}}2\nrm{z_i-z_{i-1}}^2 .
\end{equation}
Now we use smoothness. Subtracting the definitions of $t_i$ and $u_i$ gives $u_i-t_i=\frac{a_i}{A_i}(z_i-z_{i-1})$. By $L_F$-smoothness,
\[
F(u_i)\ \le\ F(t_i)+\ip{g_i}{u_i-t_i}+\frac{L_F}2\nrm{u_i-t_i}^2=F(t_i)+\ip{g_i}{u_i-t_i}+\frac{L_Fa_i^2}{2A_i^2}\nrm{z_i-z_{i-1}}^2 .
\]
Multiplying by $A_i$ and inserting the result into \eqref{eq:inv-mid},
\[
\min_\cX\Psi_i\ \ge\ c+A_iF(u_i)+\frac12\Big[\Gamma_{i-1}-\frac{L_Fa_i^2}{A_i}\Big]\nrm{z_i-z_{i-1}}^2\ \ge\ c+A_iF(u_i),
\]
where the bracket is nonnegative by \eqref{eq:stepcond}. This completes the induction.

\emph{Step 5: conclusion.}
The first inequality in \eqref{eq:blockineq} is \eqref{eq:psiupper} with $i=k$. For the second, combine \eqref{eq:psigrowth} for $i=k$ with the invariant of Step~4: $\Psi_k(x)\ge\min_\cX\Psi_k+\frac{\Gamma_k}2\nrm{x-z_k}^2\ge c+A_kF(u_k)+\frac{\Gamma_k}2\nrm{x-z_k}^2$, and $\Gamma_k=\gamma+\mu_FA_k$.
\end{proof}

\Cref{alg:block} is \cref{lem:block} with $F=f(\cdot,y)$, $\mu_F=\mu$, $L_F=L$, and prior $\pi=q_b$, that is, $c=b$, $\gamma=\sigma=\mu/2$ and $\bar x=x_b$. In this case
\[
A_K=\frac{\sigma K(K+3)}{4L}=\frac{K(K+3)}{8\kappa}.
\]

\subsection{Certificate transport}

\begin{corollary}[transport]\label{cor:transport}
Let \cref{ass:minmax} hold, and let $(b,x_b,y_b)$ be a lower model, that is, $x_b\in\cX$, $y_b\in\cY$, and
\[
f(x,y_b)\ \ge\ q_b(x)=b+\frac\sigma2\nrm{x-x_b}^2\quad\forall x\in\cX,\qquad \sigma=\mu/2 .
\]
Let $y_c\in\cY$, let $(u,z,A)=\textsc{Block}(f(\cdot,y_c),q_b,K)$, and let $v=f(u,y_c)$. Put $\vartheta=A/(1+A)$. Then the triple
\[
b^+=\frac{b+Av}{1+A},\qquad x_b^+=z,\qquad y_b^+=(1-\vartheta)y_b+\vartheta y_c=\frac{y_b+Ay_c}{1+A}
\]
is again a lower model: $x_b^+\in\cX$, $y_b^+\in\cY$, and $f(x,y_b^+)\ge b^++\frac\sigma2\nrm{x-x_b^+}^2$ for all $x\in\cX$. This is inequality \eqref{eq:transport}.
\end{corollary}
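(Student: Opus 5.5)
We apply \cref{lem:block} with $F=f(\cdot,y_c)$, $\mu_F=\mu$, $L_F=L$ and prior $\pi=q_b$, that is, $c=b$, $\gamma=\sigma$ and $\bar x=x_b$. Then combine the resulting inequality with concavity of $f(x,\cdot)$ along the segment $[y_b,y_c]$.

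\emph{Membership.} We have $x_b^+=z=z_K\in\cX$, because \cref{lem:block} produces $z_K$ as a projection onto $\cX$. We also have $y_b^+\in\cY$, since $\vartheta=A/(1+A)\in[0,1)$ and $\cY$ is convex.

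\emph{Transported bound.} Part~(iii) of \cref{lem:block} gives, for all $x\in\cX$,
\[
q_b(x)+A\,f(x,y_c)\ \ge\ b+A\,f(u,y_c)+\frac{\sigma+\mu A}{2}\nrm{x-z}^2\ \ge\ b+Av+\frac{\sigma(1+A)}{2}\nrm{x-z}^2 .
\]
Here we used $\sigma+\mu A=\sigma(1+2A)\ge\sigma(1+A)$ and $v=f(u,y_c)$. The lower model gives $f(x,y_b)\ge q_b(x)$, so the left-hand side is at most $f(x,y_b)+Af(x,y_c)$. Divide by $1+A$. Concavity of $f(x,\cdot)$ then gives
\[
f(x,y_b^+)\ \ge\ (1-\vartheta)f(x,y_b)+\vartheta f(x,y_c)=\frac{f(x,y_b)+Af(x,y_c)}{1+A}.
\]
Chaining these, $f(x,y_b^+)\ge \frac{b+Av}{1+A}+\frac\sigma2\nrm{x-z}^2=b^++\frac\sigma2\nrm{x-x_b^+}^2$, which is \eqref{eq:transport}.

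The proof reduces to the single inequality of \cref{lem:block}(iii), and no step is a genuine obstacle. The one place needing care is the quadratic coefficient. After dividing by $1+A$, we must still have curvature $\sigma$. This is exactly why the paper sets $\sigma=\mu/2$ rather than $\mu$: the factor $\gamma+\mu_FA=\sigma+2\sigma A$ dominates $\sigma(1+A)$. Any choice with $\sigma\le\mu$ would also work here.
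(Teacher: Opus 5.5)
Your proof is correct and is the paper's argument: \cref{lem:block}(iii) with prior $q_b$, then the lower-model bound and concavity along $[y_b,y_c]$, then division by $1+A$. You bound the curvature before dividing and the paper after, which is the same thing. One correction to your closing remark: the transport step does not explain the choice $\sigma=\mu/2$, since with $\sigma=\mu$ it holds with equality in the curvature; as the paper notes, the factor $\tfrac12$ is needed only for the initialization of \cref{lem:init}.
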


\begin{proof}
First, $z\in\cX$ by \cref{lem:block}(i), and $y_b^+\in\cY$ as a convex combination of $y_b,y_c\in\cY$. Fix $x\in\cX$ and apply \eqref{eq:blockineq} to $F=f(\cdot,y_c)$ with prior $\pi=q_b$ (so $c=b$, $\gamma=\sigma$, $\mu_F=\mu$) and $F(u)=v$:
\[
q_b(x)+A\,f(x,y_c)\ \ge\ b+Av+\frac{\sigma+\mu A}{2}\nrm{x-z}^2 .
\]
Since $f(x,y_b)\ge q_b(x)$, the left-hand side is at most $f(x,y_b)+Af(x,y_c)$. By concavity of $f(x,\cdot)$,
\[
f(x,y_b^+)\ \ge\ (1-\vartheta)f(x,y_b)+\vartheta f(x,y_c)=\frac{f(x,y_b)+Af(x,y_c)}{1+A}.
\]
Combining the two displays and dividing by $1+A$,
\[
f(x,y_b^+)\ \ge\ \frac{b+Av}{1+A}+\frac{\sigma+\mu A}{2(1+A)}\nrm{x-z}^2\ \ge\ b^++\frac\sigma2\nrm{x-z}^2 ,
\]
where the last step uses $\frac{\sigma+\mu A}{1+A}\ge\sigma$, which holds because $\mu\ge\sigma$.
\end{proof}

The conclusion holds for \emph{every} value of $v$: the transported triple is always a valid lower model. The comparison of $v$ with the threshold in \cref{alg:ct} matters only for progress, because $b^+-b=\vartheta(v-b)$ is positive exactly when $v>b$. The proof also shows where the curvature $\sigma=\mu/2$ of the lower model comes from: the transported curvature is $\frac{\sigma+\mu A}{1+A}\ge\sigma$ for every $\sigma\le\mu$, so transport works equally with $\sigma=\mu$, and the factor $\frac12$ is needed only by the initialization of \cref{lem:init}.

\subsection{Initialization of the lower model}

\begin{lemma}[initialization]\label{lem:init}
Let $F$ be $\mu$-strongly convex and $L$-smooth on $\cX$, let $\diam\cX\le\Rx$, let $x^\star=\argmin_\cX F$, and let $E_{\rm target}>0$. Using at most
\[
\lceil\sqrt{8\kappa}\,\rceil\Big(2+\Big\lceil\log_2^+\frac{\mu\Rx^2}{4E_{\rm target}}\Big\rceil\Big)
\]
gradients of $F$ and no function values, one can find $u,z\in\cX$ such that
\[
F(u)-F(x^\star)\le\frac{E_{\rm target}}2
\qquad\text{and}\qquad
F(x)\ \ge\ F(u)-3E_{\rm target}+\frac\sigma2\nrm{x-z}^2\quad\forall x\in\cX,
\]
where $\sigma=\mu/2$.
\end{lemma}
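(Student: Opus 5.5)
The plan is to restart \cref{lem:block} from its own output point $z$, using only gradients. The key observation is that the constant $c$ of the prior never enters the iteration \eqref{eq:blockiter}: by \cref{lem:block}(i), only $\gamma$ and $\bar x$ are used to compute the points. So we may run $\textsc{Block}(F,\pi,K_0)$ with $K_0=\lceil\sqrt{8\kappa}\,\rceil$ and a \emph{fictitious} prior $\pi(x)=c+\frac\mu2\nrm{x-\bar x}^2$, that is, $\gamma=\mu$ and an arbitrary $c$. In the analysis we use \eqref{eq:blockineq} only at $x=x^\star$, where $c$ cancels. No function values are needed anywhere. With $\gamma=\mu$ and $K_0\ge\sqrt{8\kappa}$, \cref{lem:block}(ii) gives $A=\frac{K_0(K_0+3)}{4\kappa}\ge2$.

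\emph{One block contracts the distance to $x^\star$.} Put $D=\nrm{\bar x-x^\star}$ and evaluate \eqref{eq:blockineq} at $x=x^\star$:
\[
\frac\mu2D^2+AF(x^\star)\ \ge\ AF(u)+\frac{\mu(1+A)}2\nrm{x^\star-z}^2 .
\]
Since $F(u)\ge F(x^\star)$, each term on the right is separately bounded by $\frac\mu2D^2$. This gives two consequences:
\[
F(u)-F(x^\star)\le\frac{\mu D^2}{2A}\le\frac{\mu D^2}4,
\qquad
\nrm{z-x^\star}^2\le\frac{D^2}{1+A}\le\frac{D^2}3 .
\]
Restarting with $\bar x\gets z$, the squared distance therefore drops by a factor $3$ per block. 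Starting from any $\bar x\in\cX$ gives $D_0\le\Rx$, so after $s$ blocks $D_s^2\le\Rx^23^{-s}$.

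\emph{Number of blocks.} We stop the restarts once $\mu D_s^2\le2E_{\rm target}$, which is guaranteed for $s\ge\log_3^+\frac{\mu\Rx^2}{2E_{\rm target}}$. One more block is then run from $\bar x=z_s$. Since $\log_3(\cdot)\le\log_2(\cdot)$ and $\frac{\mu\Rx^2}{2E_{\rm target}}=2\cdot\frac{\mu\Rx^2}{4E_{\rm target}}$, the number of blocks is at most $2+\lceil\log_2^+\frac{\mu\Rx^2}{4E_{\rm target}}\rceil$, with some slack in rounding. Each block costs $K_0$ gradients, which gives the stated count. For the final block, the first consequence above yields $F(u)-F(x^\star)\le\mu D_s^2/4\le E_{\rm target}/2$, the first claim. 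The second consequence yields $\frac\mu2\nrm{z-x^\star}^2\le\frac{\mu D_s^2}6\le\frac{E_{\rm target}}3$.

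\emph{Lower model.} By strong convexity at the minimizer, $F(x)\ge F(x^\star)+\frac\mu2\nrm{x-x^\star}^2$ for all $x\in\cX$. Using $\nrm{x-z}^2\le2\nrm{x-x^\star}^2+2\nrm{x^\star-z}^2$, we get
\[
F(x)\ \ge\ F(x^\star)+\frac\mu4\nrm{x-z}^2-\frac\mu2\nrm{z-x^\star}^2 .
\]
Substituting $F(x^\star)\ge F(u)-E_{\rm target}/2$ and the bound on $\nrm{z-x^\star}$ gives
\[
F(x)\ \ge\ F(u)-\tfrac56E_{\rm target}+\tfrac\sigma2\nrm{x-z}^2,\qquad \sigma=\mu/2,
\]
which is stronger than the claim with $3E_{\rm target}$.

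The main obstacle is the first step: certifying contraction without knowing $F(x^\star)$ or any function value. Evaluating \eqref{eq:blockineq} at $x^\star$ handles this, because the unknown $c$ appears on both sides and cancels. The remaining steps are bookkeeping: matching the rounding of $\log_3$ versus $\log_2$ and the factor $\frac{\mu\Rx^2}{4E_{\rm target}}$ against the stated budget.
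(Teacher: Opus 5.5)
Your proof is correct, but it takes a different route from the paper.

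\emph{The paper's route.} It keeps the prior curvature $\gamma=\mu/2$, so that only $A\ge1$. It restarts each run from the point $u$ and uses the function gap as the potential: $F(u_s)-F(x^\star)\le E_s$ gives $\nrm{x^\star-u_s}^2\le 2E_s/\mu$, so the gap halves per run. It then derives the lower model from the block inequality \eqref{eq:blockineq} at a general $x$, divided by $A$ and re-centered at $z$ via $\nrm{x-v}^2\le2\nrm{x-z}^2+2\nrm{z-v}^2$ and $\nrm{z-v}^2\le3\nrm{x^\star-v}^2$. That last bound is where the constant $3E_{\rm target}$ comes from.

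\emph{Your route.} You take $\gamma=\mu$, so that $A\ge2$, and use \eqref{eq:blockineq} only at $x=x^\star$. You restart from $z$, with the squared distance to $x^\star$ as the potential; it contracts by $1+A\ge3$ per block. You build the final lower model not from the block inequality but from the quadratic growth $F(x)\ge F(x^\star)+\frac\mu2\nrm{x-x^\star}^2$, together with the two scalar bounds from the last run. This is shorter and gives the sharper constant $\frac56E_{\rm target}$, and the restart count fits the stated budget.

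\emph{What each buys.} The paper's intermediate model
$F(x)\ge F(u)+(\frac\mu2-\frac\mu{4A})\nrm{x-z}^2-\frac{\mu}{2A}\nrm{z-v}^2$,
before the a priori bound on $\nrm{z-v}$ is inserted, involves only computed points, the known $A$, and the value $F(u)$. It is therefore a checkable certificate, in the spirit of the rest of the paper. Yours rests on the unobservable $F(x^\star)$ and $\nrm{z-x^\star}$, so it is valid only through the a priori bounds. For the lemma as stated, which is itself an a priori statement, both suffice.

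Two small remarks. First, the ``fictitious'' constant $c$ needs no special justification: \cref{lem:block} allows any $c\in\RR$, and the paper simply takes $c=0$. Second, $D_s$ is not observable, so the restarts must stop after the fixed count $\lceil\log_3^+\frac{\mu\Rx^2}{2E_{\rm target}}\rceil$ rather than when $\mu D_s^2\le2E_{\rm target}$. Your bookkeeping in fact uses this fixed count.
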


\begin{proof}
\emph{One run.}
Fix $v\in\cX$ and apply \cref{lem:block} with $\mu_F=\mu$, $L_F=L$, prior $\pi(x)=\frac\gamma2\nrm{x-v}^2$ (so $c=0$, $\bar x=v$), $\gamma=\mu/2$, and $k=\lceil\sqrt{8\kappa}\,\rceil$ steps. Let $u,z$ be its output and
\[
A\defeq A_k=\frac{\mu k(k+3)}{8L}\ \ge\ \frac{k^2}{8\kappa}\ \ge\ 1 .
\]
Dividing \eqref{eq:blockineq} by $A$ gives
\begin{equation}\label{eq:initstep}
F(x)\ \ge\ F(u)+\frac{\mu/2+\mu A}{2A}\nrm{x-z}^2-\frac{\mu}{4A}\nrm{x-v}^2\qquad\forall x\in\cX .
\end{equation}
We extract three consequences.

(a) \emph{Accuracy of $u$.} Take $x=x^\star$ in \eqref{eq:initstep} and drop the nonnegative term with $\nrm{x^\star-z}^2$: $F(u)-F(x^\star)\le\frac{\mu}{4A}\nrm{x^\star-v}^2$.

(b) \emph{Position of $z$.} Take $x=x^\star$ again and use $F(x^\star)\le F(u)$:
$\frac{\mu/2+\mu A}{2A}\nrm{x^\star-z}^2\le\frac{\mu}{4A}\nrm{x^\star-v}^2$, that is, $\nrm{x^\star-z}^2\le\frac{\nrm{x^\star-v}^2}{1+2A}\le\frac{\nrm{x^\star-v}^2}3$. By the triangle inequality,
\[
\nrm{z-v}^2\le\big(\nrm{z-x^\star}+\nrm{x^\star-v}\big)^2\le\big(1+3^{-1/2}\big)^2\nrm{x^\star-v}^2\le3\nrm{x^\star-v}^2 .
\]

(c) \emph{Lower model centered at $z$.} Insert $\nrm{x-v}^2\le2\nrm{x-z}^2+2\nrm{z-v}^2$ into \eqref{eq:initstep}. The coefficient of $\nrm{x-z}^2$ becomes $\frac{\mu/2+\mu A}{2A}-\frac{\mu}{2A}=\frac\mu2-\frac{\mu}{4A}\ge\frac\mu4=\frac\sigma2$ (since $A\ge1$), and by (b) the remaining term is at least $-\frac{\mu}{2A}\cdot3\nrm{x^\star-v}^2\ge-\frac{3\mu}2\nrm{x^\star-v}^2$. Thus
\[
F(x)\ \ge\ F(u)+\frac\sigma2\nrm{x-z}^2-\frac{3\mu}2\nrm{x^\star-v}^2\qquad\forall x\in\cX .
\]

Suppose now that $F(v)-F(x^\star)\le E$. Since $x^\star$ minimizes the $\mu$-strongly convex $F$ over $\cX$, we have $F(v)-F(x^\star)\ge\frac\mu2\nrm{v-x^\star}^2$, hence $\nrm{x^\star-v}^2\le2E/\mu$. Then (a) gives $F(u)-F(x^\star)\le\frac{E}{2A}\le\frac E2$, and (c) gives $F(x)\ge F(u)-3E+\frac\sigma2\nrm{x-z}^2$. In words: \emph{one run started from a point with accuracy $E$ halves the accuracy and produces a lower model with constant $F(u)-3E$.}

\emph{Restarts.}
Start from an arbitrary $v_0\in\cX$. Since $\nrm{x^\star-v_0}\le\Rx$, (a) shows that the first run returns $u_1$ with $F(u_1)-F(x^\star)\le\frac{\mu\Rx^2}{4}\eqqcolon E_1$. Restarting from $u_s$ gives $E_{s+1}=E_s/2$, so $E_s=2^{1-s}E_1\le E_{\rm target}$ as soon as $s\ge1+\log_2\frac{\mu\Rx^2}{4E_{\rm target}}$. After $s=1+\lceil\log_2^+\frac{\mu\Rx^2}{4E_{\rm target}}\rceil$ runs we have a point $v$ with $F(v)-F(x^\star)\le E_{\rm target}$, and one more run from $v$ gives the claim with $E=E_{\rm target}$. Each run costs $k=\lceil\sqrt{8\kappa}\,\rceil$ gradients, and no run uses function values.
\end{proof}

The lemma produces a lower model whose constant $F(u)-3E_{\rm target}$ involves the value $F(u)$, which the lemma itself does not compute. In \cref{alg:ct} this value comes for free with the first query in $y$.

\begin{corollary}[start of \cref{alg:ct}]\label{cor:start}
Let \cref{ass:minmax} hold, let $y_0\in\cY$ be arbitrary, and apply \cref{lem:init} to $F=f(\cdot,y_0)$ with $E_{\rm target}=B/3$. Make one call $(v_0,g_0)=\Oy(u,y_0)$, so that $v_0=f(u,y_0)$. Then
\[
b=v_0-B,\qquad x_b=z,\qquad y_b=y_0
\]
is a lower model, the plane $\ell_0(y)=v_0+\ip{g_0}{y-y_0}$ satisfies $\ell_0\ge f(u,\cdot)$ on $\cY$, and $a=\max_\cY\ell_0\le v_0+B$. Hence $a-b\le2B$. The cost is
\[
I_0=\lceil\sqrt{8\kappa}\,\rceil\Big(2+\Big\lceil\log_2^+\frac{3\mu\Rx^2}{4B}\Big\rceil\Big)=O\Big(\sqrt\kappa\Big[1+\logp\frac{\mu\Rx^2}{B}\Big]\Big)
\]
calls to $\Ox$ and one call to $\Oy$.
\end{corollary}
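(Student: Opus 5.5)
The statement to prove is \cref{cor:start}. The plan is to combine the two conclusions of \cref{lem:init}, applied to $F=f(\cdot,y_0)$ with $E_{\rm target}=B/3$, with concavity and the Lipschitz bound in $y$. First the lower model. \Cref{lem:init} returns $u,z\in\cX$ with
\[
f(x,y_0)\ \ge\ f(u,y_0)-3E_{\rm target}+\tfrac\sigma2\nrm{x-z}^2\qquad\forall x\in\cX .
\]
With $E_{\rm target}=B/3$ the constant equals $v_0-B$, where $v_0=f(u,y_0)$ is the value returned by the single call $\Oy(u,y_0)$. So $(b,x_b,y_b)=(v_0-B,z,y_0)$ satisfies \eqref{eq:lowermodel}, with $z\in\cX$ and $y_0\in\cY$. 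The lemma needs no function values, and the one value used here comes from that single $y$-query.

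Next the plane. Since $f(u,\cdot)$ is concave and $g_0$ is a supergradient at $y_0$, we have $\ell_0\ge f(u,\cdot)$ on $\cY$. For the bound on $a$, take $y\in\cY$; then $\nrm{y-y_0}\le\Ry$ and $\nrm{g_0}\le\My$. Cauchy--Schwarz gives $\ell_0(y)\le v_0+\My\Ry=v_0+B$, so $a=\max_\cY\ell_0\le v_0+B$. Subtracting $b=v_0-B$ yields $a-b\le2B$.

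Finally the cost. Substituting $E_{\rm target}=B/3$ into the gradient count of \cref{lem:init} turns $\frac{\mu\Rx^2}{4E_{\rm target}}$ into $\frac{3\mu\Rx^2}{4B}$, which gives the stated $I_0$. To get the $O$-form, use $\lceil\sqrt{8\kappa}\rceil\le 4\sqrt\kappa$ (valid since $\kappa\ge1$) and $\lceil\log_2^+(3t/4)\rceil\le 1+O(\logp t)$. The one call to $\Oy$ is the query above.

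No step is a real obstacle; the only point to check is the match of constants, namely that $3E_{\rm target}=B$ exactly.
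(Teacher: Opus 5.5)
Your proposal is correct and follows the paper's proof: the lower model comes directly from \cref{lem:init} because $3E_{\rm target}=B$, the plane dominates $f(u,\cdot)$ by concavity, and $a\le v_0+B$ follows from $\nrm{g_0}\le\My$ and $\nrm{y-y_0}\le\Ry$. You also check the $O$-form of $I_0$ explicitly, via $\lceil\sqrt{8\kappa}\,\rceil\le4\sqrt\kappa$ for $\kappa\ge1$ and the monotonicity of $\log_2^+$; the paper leaves that step implicit.
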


\begin{proof}
By \cref{lem:init} with $E_{\rm target}=B/3$, $f(x,y_0)\ge v_0-B+\frac\sigma2\nrm{x-z}^2$ for all $x\in\cX$, which is the lower model. The plane is an upper bound on $f(u,\cdot)$ by concavity. For every $y\in\cY$, $\ell_0(y)\le v_0+\nrm{g_0}\nrm{y-y_0}\le v_0+\My\Ry=v_0+B$.
\end{proof}

\subsection{Centers of gravity}

\begin{lemma}[Grünbaum]\label{lem:grunbaum}
Let $S\subset\RR^m$ be a convex body with center of gravity $c$, and let $H$ be a closed half-space with $c\notin\operatorname{int}H$. Then $\vol(S\cap H)\le(1-e^{-1})\vol(S)$ \citep{grunbaum1960partitions}. For $m=1$ the constant $1-e^{-1}$ can be replaced by $1/2$.
\end{lemma}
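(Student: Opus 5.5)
The statement immediately above is Grünbaum's lemma, cited from \citet{grunbaum1960partitions}. We will invoke it rather than reprove it in full, and only sketch the classical argument together with the one-dimensional refinement.

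\emph{Reduction.} Since $c\notin\operatorname{int}H$, we may shrink $H$ to the half-space $H'$ whose boundary hyperplane passes through $c$ and is parallel to $\partial H$; then $S\cap H\subseteq S\cap H'$. It therefore suffices to treat a half-space $H=\{y:\ip{w}{y-c}\ge0\}$ with $c\in\partial H$, and to show that $\vol(S\cap H)\le(1-e^{-1})\vol S$. Equivalently, the complementary side has volume at least $e^{-1}\vol S$. Replacing $w$ by $-w$, it is enough to prove that every closed half-space through $c$ carries volume at least $e^{-1}\vol S$.

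\emph{Symmetrization.} Set $t=\ip{w}{y}$ and let $\rho(t)=\vol_{m-1}(S\cap\{\ip{w}{y}=t\})$. By Brunn--Minkowski, $\rho^{1/(m-1)}$ is concave on its support. Schwarz symmetrization about the $w$-axis replaces $S$ by a body of revolution with the same $\rho$. This preserves both the center-of-gravity coordinate $\int t\rho/\int\rho$ and the volume on each side of the cut. Among concave profiles $\rho^{1/(m-1)}$, we then show that the ratio $\int_{t\ge t_c}\rho/\int\rho$ is minimized by a cone. The argument replaces the body on one side of the cut by a cone with the same cross-section at $t_c$: this moves mass away from the cut while keeping the centroid on the cut, and it can only decrease the mass on the smaller side. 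This comparison is the main obstacle. Care is needed to adjust the cone so that the centroid stays exactly at $t_c$.

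\emph{The cone computation.} For a cone with apex at $0$ and base at height $1$, the density is $\rho\propto t^{m-1}$ and the centroid is at $t_c=\frac{m}{m+1}$. The mass beyond the centroid equals $1-\big(\frac{m}{m+1}\big)^m$. The mass on the apex side equals $\big(\frac{m}{m+1}\big)^m\ge e^{-1}$, and the other side is even larger. This gives the bound $1-e^{-1}$ for both sides.

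\emph{The case $m=1$.} Here $S$ is a segment and $c$ is its midpoint, so any half-line whose interior misses $c$ contains at most half of the length. This gives the constant $1/2$ directly.
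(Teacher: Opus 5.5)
Your proposal is correct and matches the paper, which also takes this inequality from Gr\"unbaum's 1960 paper without proof. Your reduction to a half-space through $c$ is sound, though passing to the parallel half-space $H'$ through $c$ enlarges $H$ rather than shrinking it, and that is exactly what gives $S\cap H\subseteq S\cap H'$. The cone value $(m/(m+1))^m\ge e^{-1}$ and the midpoint argument for $m=1$ are also right, and the symmetrization sketch is not needed, since both you and the paper rest on the citation.
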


In our use, $H=\{y:\ip{g}{y-c}\ge0\}$ is a half-space whose boundary passes through $c$, so $c\notin\operatorname{int}H$.

The next lemma is the center-of-gravity method with inexact function values; it is used in \cref{app:minmax-cross} to solve the problems in $y$.

\begin{lemma}[center of gravity with inexact values]\label{lem:cog}
Let $\psi$ be concave on $\cY$. Put $\cY_0=\cY$ and, for $t=1,2,\ldots$, let the query $y_t$ be the center of gravity of $\cY_{t-1}$ and
\[
\cY_t=\{y\in\cY_{t-1}:\ \ip{g_t}{y-y_t}\ge0\}.
\]
Suppose that each query returns a plane $\ell_t(y)=w_t+\ip{g_t}{y-y_t}$ with $\ell_t\ge\psi$ on $\cY$ and $\nrm{g_t}\le\My$, and a number $\beta_t\le\psi(y_t)$ with $w_t-\beta_t\le\delta$. Let $a_t=\max_\cY\min_{s\le t}\ell_s$. Then
\[
a_t-\max_{s\le t}\beta_s\ \le\ B(1-e^{-1})^{t/m}+\delta .
\]
In particular, a $\delta$-maximizer of a concave $\My$-Lipschitz function on $\cY$ can be found with $O(m\log\frac B\delta)$ exact queries of its value and a supergradient.
\end{lemma}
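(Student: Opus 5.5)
We prove \cref{lem:cog} by the standard center-of-gravity volume argument, adapted so that the inexact values enter only through the single additive slack $\delta$. Write $U_t=\min_{s\le t}\ell_s$, $\beta^\star_t=\max_{s\le t}\beta_s$, and let $y^\star\in\argmax_\cY U_t$, so $a_t=U_t(y^\star)$. The strategy is to compare $U_t$ with the planes through the points $y_s$ where the iterates left the localizer, and to use Grünbaum's lemma (\cref{lem:grunbaum}) to show that the localizer is small.

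\emph{Step 1 (volume).} Each cut $\cY_t=\cY_{t-1}\cap\{\ip{g_t}{y-y_t}\ge0\}$ is a half-space through the center of gravity $y_t$ of $\cY_{t-1}$. Hence $\vol\cY_t\le(1-e^{-1})^t\vol\cY$. If some $g_t=0$, then $\ell_t\equiv w_t$ and $a_t\le w_t\le\beta_t+\delta$, so the claim is immediate; we therefore assume $g_t\neq0$ throughout.

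\emph{Step 2 (shrunk copy).} Fix $\lambda\in(0,1)$ with $\lambda^m>(1-e^{-1})^t$, and set $S_\lambda=y^\star+\lambda(\cY-y^\star)\subseteq\cY$. Then $\vol S_\lambda=\lambda^m\vol\cY>\vol\cY_t$, so $S_\lambda\not\subseteq\cY_t$. Pick $y'=y^\star+\lambda(w-y^\star)\in S_\lambda\setminus\cY_t$ with $w\in\cY$. Since $y'\notin\cY_t$, there is an $s\le t$ with $\ip{g_s}{y'-y_s}<0$, which gives $\ell_s(y')<w_s$.

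\emph{Step 3 (linear interpolation).} Because $\ell_s$ is affine,
\[
\ell_s(y')=(1-\lambda)\ell_s(y^\star)+\lambda\ell_s(w)\ \ge\ (1-\lambda)a_t+\lambda\bigl(\ell_s(y^\star)-\My\Ry\bigr)\ \ge\ a_t-\lambda B,
\]
using $\ell_s(y^\star)\ge U_t(y^\star)=a_t$ and $\ell_s(w)\ge\ell_s(y^\star)-\My\nrm{w-y^\star}$. Therefore
\[
a_t\ <\ w_s+\lambda B\ \le\ \beta_s+\delta+\lambda B\ \le\ \beta^\star_t+\delta+\lambda B .
\]
Letting $\lambda\downarrow(1-e^{-1})^{t/m}$ gives the bound. (If $(1-e^{-1})^{t/m}\ge1$, the claim follows directly from $a_t\le w_1+B$.)

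\emph{Step 4 (consequence).} For exact queries we take $\beta_t=w_t=\psi(y_t)$ with $\delta=0$; the final sentence of the lemma then follows by choosing $t=\lceil m\log\frac B\delta/\log\frac{1}{1-e^{-1}}\rceil$ and returning the best $y_s$, since $\max_\cY\psi\le a_t$. The delicate point is Step~2: we must ensure that the bad point $y'$ yields a violated cut with respect to a \emph{recorded} plane, which holds because the localizer is defined only by the cuts. We must also take care that we need no claim that $y^\star\in\cY_t$, only the affine estimate of Step~3.
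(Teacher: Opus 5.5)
Your proof is correct and follows the paper's argument: the Gr\"unbaum bound $\operatorname{vol}\cY_t\le(1-e^{-1})^t\operatorname{vol}\cY$, then a homothetic copy $y^\star+\lambda(\cY-y^\star)$ that cannot fit inside $\cY_t$, which yields a point $y'$ violating some recorded cut, and finally $\ell_s(y')<w_s\le\beta_s+\delta$. The differences are cosmetic. You lower-bound the single affine plane $\ell_s$ at $y'$ directly via $\ell_s\ge U_t$, whereas the paper uses concavity and Lipschitz continuity of $U_t$ and then $U_t(y')\le\ell_s(y')$. You also skip the paper's separate zero-volume case, which is harmless because the volume bound only becomes easier there.
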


\begin{proof}
\emph{Degenerate cases.} If $g_s=0$ for some $s\le t$, then $\ell_s\equiv w_s$, so $a_t\le\max_\cY\ell_s=w_s\le\beta_s+\delta$, and the claim holds. If $\vol\cY_{s}=0$ for some $s<t$, the argument below applied at time $s$ (with any $\lambda\in(0,1]$) gives $a_s-\max_{s'\le s}\beta_{s'}\le\delta$, and the claim holds at all later times because $a_t\le a_s$. So assume neither happens.

\emph{Volume.} Each $\cY_t$ is the intersection of $\cY_{t-1}$ with a half-space whose boundary passes through the center of gravity $y_t$ of $\cY_{t-1}$. By \cref{lem:grunbaum}, $\vol\cY_t\le(1-e^{-1})\vol\cY_{t-1}$, so $\vol\cY_t\le(1-e^{-1})^t\vol\cY$.

\emph{A point outside $\cY_t$ with a large value of the model.} Let $U=\min_{s\le t}\ell_s$, let $y^\circ\in\argmax_\cY U$, so that $U(y^\circ)=a_t$, and fix $\lambda\in\big((1-e^{-1})^{t/m},1\big]$. The set $S=y^\circ+\lambda(\cY-y^\circ)$ is contained in $\cY$ by convexity, and $\vol S=\lambda^m\vol\cY>\vol\cY_t$. Hence there is a point $y'\in S\setminus\cY_t$, and we can write $y'=y^\circ+\lambda(y''-y^\circ)$ with $y''\in\cY$. On the one hand, $U$ is concave and $\My$-Lipschitz (a minimum of affine functions with slopes of norm at most $\My$), and $U(y'')\ge U(y^\circ)-\My\nrm{y''-y^\circ}\ge a_t-B$. Therefore
\[
U(y')\ \ge\ (1-\lambda)U(y^\circ)+\lambda U(y'')\ \ge\ a_t-\lambda B .
\]
On the other hand, $y'\in\cY\setminus\cY_t$, so some cut excluded it: $\ip{g_s}{y'-y_s}<0$ for some $s\le t$. For this $s$,
\[
U(y')\ \le\ \ell_s(y')=w_s+\ip{g_s}{y'-y_s}\ <\ w_s\ \le\ \beta_s+\delta .
\]
Comparing the two bounds gives $a_t-\beta_s<\lambda B+\delta$. Letting $\lambda\downarrow(1-e^{-1})^{t/m}$ proves the claim.

\emph{Exact values.} If the queries are exact, take $w_t=\beta_t=\psi(y_t)$ and $\delta=0$, with $g_t$ a supergradient of $\psi$ at $y_t$, so that $\ell_t\ge\psi$ by concavity. Since $\max_\cY\psi\le a_t$, the best query point $y_s$ satisfies $\max_\cY\psi-\psi(y_s)\le B(1-e^{-1})^{t/m}$, which is at most $\delta$ after $t=\lceil m\log(B/\delta)/\log\frac{e}{e-1}\rceil$ queries.
\end{proof}

\begin{remark}[implementing centers of gravity]\label{rem:implementation}
In the oracle model, centers of gravity are free. If the exact center of gravity is replaced by an approximate one computed by a random walk \citep{bertsimas2004solving}, every central cut still removes a constant fraction of the volume with high probability, which changes only the absolute constants in \cref{thm:general}. In general, computing the center of gravity exactly is hard \citep{rademacher2007approximating}. For $m\le3$, exact centers of gravity of polytopes are cheap to compute, and for $m=1$ the center of gravity is the midpoint of a segment.
\end{remark}

\subsection{Recovery of a primal point}

\begin{lemma}[recovery]\label{lem:recovery}
Let $f(\cdot,y)$ be convex on $\cX$ for every $y\in\cY$. Let $u_1,\ldots,u_N\in\cX$, and let the planes $\ell_i(y)=v_i+\ip{g_i}{y-y_i}$ satisfy $\ell_i\ge f(u_i,\cdot)$ on $\cY$. Put $a=\max_\cY\min_i\ell_i$. Then there is $\lambda\in\Delta_N$ such that $\bar x=\sum_i\lambda_iu_i$ satisfies $P(\bar x)\le a$. Moreover, $\lambda$ can be computed from the planes alone, without further oracle calls.
\end{lemma}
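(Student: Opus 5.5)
The plan is to reduce the statement to a finite-dimensional bilinear minimax problem over the simplex $\Delta_N$ and the set $\cY$, and apply a minimax theorem there. Define $\Phi(\lambda,y)\defeq\sum_i\lambda_i\ell_i(y)$ on $\Delta_N\times\cY$. It is bilinear (affine in $y$ for fixed $\lambda$, linear in $\lambda$ for fixed $y$), and both sets are compact and convex. By von Neumann's (or Sion's) minimax theorem,
\[
\min_{\lambda\in\Delta_N}\max_{y\in\cY}\Phi(\lambda,y)=\max_{y\in\cY}\min_{\lambda\in\Delta_N}\Phi(\lambda,y)=\max_{y\in\cY}\min_i\ell_i(y)=a,
\]
where the inner minimum of a linear function over the simplex is attained at a vertex. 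Let $\lambda$ attain the outer minimum on the left, which exists by compactness and continuity. Then $\max_{y\in\cY}\sum_i\lambda_i\ell_i(y)=a$.

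Next I pass from planes to $f$. Fix $y\in\cY$ and set $\bar x=\sum_i\lambda_iu_i\in\cX$, which lies in $\cX$ by convexity of $\cX$. By convexity of $f(\cdot,y)$ and the assumption $\ell_i\ge f(u_i,\cdot)$ on $\cY$,
\[
f(\bar x,y)\le\sum_i\lambda_if(u_i,y)\le\sum_i\lambda_i\ell_i(y)\le a.
\]
Taking the maximum over $y\in\cY$ gives $P(\bar x)\le a$.

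For computability, note that $\lambda$ is defined solely through the data $(v_i,g_i,y_i)$ of the planes and the set $\cY$. It solves the convex program $\min_{\lambda\in\Delta_N}\max_{y\in\cY}\sum_i\lambda_i\ell_i(y)$, which is exactly the rule in the return line of \cref{alg:ct}. Under the paper's information-based model, such operations with the collected planes are free, so no further oracle calls are needed.

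The only step requiring care is justifying the minimax equality. This relies on compactness of $\cY$, which is part of the standing setting, and on the fact that $\Phi$ is continuous and bilinear, so Sion's theorem applies directly. No assumptions on $f$ beyond convexity in $x$ are used, and concavity in $y$ enters only through the hypothesis that the planes dominate $f(u_i,\cdot)$.
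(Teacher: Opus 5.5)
Your proposal is correct and follows the paper's proof essentially step by step. You apply Sion's minimax theorem to $\sum_i\lambda_i\ell_i(y)$ on $\Delta_N\times\cY$, using the vertex argument to identify the inner minimum with $\min_i\ell_i$. You then take $\lambda$ attaining $\min_\lambda\max_y$ and conclude from convexity of $f(\cdot,y)$ and $\ell_i\ge f(u_i,\cdot)$ that $P(\bar x)\le a$.
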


\begin{proof}
For fixed $y$, the minimum of the linear function $\lambda\mapsto\sum_i\lambda_i\ell_i(y)$ over the simplex $\Delta_N$ is attained at a vertex, so $\min_i\ell_i(y)=\min_{\lambda\in\Delta_N}\sum_i\lambda_i\ell_i(y)$. The function $(\lambda,y)\mapsto\sum_i\lambda_i\ell_i(y)$ is continuous and affine in each argument, and $\Delta_N$ and $\cY$ are compact and convex. By the minimax theorem \citep{sion1958general},
\[
a=\max_{y\in\cY}\min_{\lambda\in\Delta_N}\sum_i\lambda_i\ell_i(y)=\min_{\lambda\in\Delta_N}\max_{y\in\cY}\sum_i\lambda_i\ell_i(y).
\]
Let $\lambda$ attain the minimum on the right; it depends only on the planes. For $\bar x=\sum_i\lambda_iu_i\in\cX$ and every $y\in\cY$, convexity of $f(\cdot,y)$ and $\ell_i\ge f(u_i,\cdot)$ give
\[
f(\bar x,y)\ \le\ \sum_i\lambda_if(u_i,y)\ \le\ \sum_i\lambda_i\ell_i(y)\ \le\ a .
\]
Taking the maximum over $y\in\cY$ gives $P(\bar x)\le a$.
\end{proof}

\section{Proof of \texorpdfstring{\cref{thm:general} and \cref{cor:onedim}}{the certificate-transport bounds}}\label{app:minmax-general}

Throughout this appendix \cref{ass:minmax} holds and $0<\eps<2B$. Volumes of subsets of $\cY$ are taken in $\aff\cY$, which has dimension $m$. As usual, bounds of the form $O(\log\frac B\eps)$ absorb additive constants, that is, they should be read as $O(1+\log\frac B\eps)$; note that $1+\log\frac B\eps>1-\log2>0$ because $\eps<2B$.

\paragraph{Notation for one run.}
We number the queries of the main loop of \cref{alg:ct} by $t=1,2,\ldots,N$. In query $t$ the algorithm
\begin{enumerate}[label=\arabic*.,leftmargin=1.6em,itemsep=1pt]
\item takes the center of gravity $y_t$ of the localizer $S_t=\{y\in\cY:\ U_t(y)\ge h\}$, where $U_t$ is the upper model before query $t$ and $h$ is the current threshold;
\item runs $(u_t,z_t,A)=\textsc{Block}(f(\cdot,y_t),q_b,K)$, which costs $K$ calls to $\Ox$;
\item calls $(v_t,g_t)=\Oy(u_t,y_t)$ and adds the plane $\ell_t(y)=v_t+\ip{g_t}{y-y_t}$ to the upper model;
\item if $v_t>h$, replaces the lower model by the transported one from \cref{cor:transport}, and otherwise leaves it unchanged.
\end{enumerate}
Here $A=K(K+3)/(8\kappa)$ is the same in every query, and we write $\vartheta=A/(1+A)$. Following \eqref{eq:balance}, we call query $t$ a \emph{cut} if $v_t\le h$ and a \emph{transport} if $v_t>h$. The quantities $a=\max_\cY U$ and $b$ always refer to their current values.

The proof has four parts. \Cref{lem:invariants} shows that the output is always certified by the width $a-b$. \Cref{lem:epochs,lem:thresholds} control the epochs and the position of the threshold. \Cref{lem:transports} bounds the number of transports, and \cref{lem:volume,lem:cuts} bound the number of cuts. The count of oracle calls then follows.

\subsection{Invariants and correctness}

\begin{lemma}[invariants]\label{lem:invariants}
At every moment of the run the following hold.
\begin{enumerate}[label=\textup{(\roman*)},leftmargin=2em,itemsep=1pt]
\item Every plane satisfies $\ell_i\ge f(u_i,\cdot)\ge\phi$ on $\cY$, and $U=\min_i\ell_i$ is concave and $\My$-Lipschitz on $\cY$. In particular, $a\ge\max_\cY\phi$.
\item $(b,x_b,y_b)$ is a lower model: $x_b\in\cX$, $y_b\in\cY$, and $f(x,y_b)\ge b+\frac\sigma2\nrm{x-x_b}^2$ for all $x\in\cX$. In particular, $\phi(y_b)\ge b$.
\item For the point $\bar x$ of \cref{lem:recovery}, $\Gap(\bar x,y_b)\le a-b$.
\item $a$ never increases.
\end{enumerate}
In particular, when the algorithm stops, $a-b\le\eps$ and the output satisfies $\Gap(\bar x,y_b)\le\eps$.
\end{lemma}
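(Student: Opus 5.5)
The plan is an induction over the steps of \cref{alg:ct}: initialization (\cref{cor:start}), then each query of the main loop, which adds a plane and possibly transports the lower model. Epoch changes only reset $W$ and $h$, so they touch neither the planes nor $(b,x_b,y_b)$ and need no separate treatment. Each item is checked at initialization and after every query.

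For (i), every plane has the form $\ell_i(y)=v_i+\ip{g_i}{y-y_i}$ with $(v_i,g_i)=\Oy(u_i,y_i)$ and $g_i\in\partial_yf(u_i,y_i)$. By concavity of $f(u_i,\cdot)$ we get $\ell_i\ge f(u_i,\cdot)$ on $\cY$. Since $f(u_i,y)\ge\min_\cX f(\cdot,y)=\phi(y)$ and $u_i\in\cX$ (\cref{lem:block}(i) and convex combinations), we obtain $\ell_i\ge\phi$. The function $U$ is a minimum of affine functions whose slopes satisfy $\nrm{g_i}\le\My$ by \cref{ass:minmax}, so it is concave and $\My$-Lipschitz. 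Taking the maximum over $\cY$ gives $a\ge\max_\cY\phi$. Item (iv) follows at once: adding a plane can only decrease the pointwise minimum $U$, and hence its maximum $a$. The statement $a\gets\max_\cY U$ simply recomputes this value.

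For (ii), the base case is \cref{cor:start}. A query either leaves the lower model unchanged (a cut) or replaces it by the triple of \cref{cor:transport}, applied with $y_c=y_t\in\cY$ and $v=v_t=f(u_t,y_t)$. That corollary asserts validity for every $v$. Here $y_t\in\cY$ because it is the center of gravity of a convex subset of $\cY$; the localizer $\{U\ge h\}$ is nonempty, which I would check from $h=b+W/2<a$ while $a-b>\eps$ within an epoch. Evaluating at $x$ and minimizing over $\cX$ gives $\phi(y_b)\ge b$.

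For (iii), apply \cref{lem:recovery} to the current planes. The planes satisfy $\ell_i\ge f(u_i,\cdot)$ by (i), so the recovered $\bar x$ has $P(\bar x)\le a$. Combined with (ii), this gives $\Gap(\bar x,y_b)=P(\bar x)-\phi(y_b)\le a-b$. The final claim holds because the loop exits only when $a-b\le\eps$ and the returned $\bar x$ is exactly the $\lambda$-combination of \cref{lem:recovery}.

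The only delicate point is the nonemptiness of the localizer, which is needed to make $y_t$ well defined. If the localizer could degenerate to a lower-dimensional set, I would take its center of gravity in its own affine hull; for correctness only $y_t\in\cY$ matters.
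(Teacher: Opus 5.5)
Your proof is correct and matches the paper's argument item by item: concavity gives $\ell_i\ge f(u_i,\cdot)\ge\phi$, the lower model stays valid by \cref{cor:start} and \cref{cor:transport} (which holds for every $v$), \cref{lem:recovery} gives $P(\bar x)\le a$, and adding planes can only lower $a$. On the side point you flag, the paper obtains $y_t\in\cY$ from \cref{lem:volume}(a), and the reason the localizer is nonempty is not $a-b>\eps$, which does not stop $a$ from falling below $h$ during an epoch; it is the epoch-reset rule $a-b>\frac23W_j$ together with $b\ge b_j$ from \cref{lem:thresholds}, which yields $a-h_j>W_j/6$.
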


\begin{proof}
(i) Each plane comes from a call $\Oy(u_i,y_i)$, so $\ell_i\ge f(u_i,\cdot)$ on $\cY$ by concavity of $f(u_i,\cdot)$, and $f(u_i,\cdot)\ge\min_xf(x,\cdot)=\phi$. Each $\ell_i$ is affine with slope $g_i$, $\nrm{g_i}\le\My$, hence $\My$-Lipschitz; a minimum of concave $\My$-Lipschitz functions is concave and $\My$-Lipschitz. Finally, $U\ge\phi$ gives $a=\max_\cY U\ge\max_\cY\phi$.

(ii) The initial triple is a lower model by \cref{cor:start}. The lower model changes only in transports, and then the new triple is a lower model by \cref{cor:transport}. The bound on $\phi(y_b)$ follows by minimizing both sides over $x\in\cX$: $\phi(y_b)=\min_xf(x,y_b)\ge\min_x\big(b+\frac\sigma2\nrm{x-x_b}^2\big)=b$.

(iii) By \cref{lem:recovery}, applied to all planes collected so far (including $\ell_0$ from \cref{cor:start}), $P(\bar x)\le a$. Together with (ii), $\Gap(\bar x,y_b)=P(\bar x)-\phi(y_b)\le a-b$.

(iv) Adding a plane can only decrease $U=\min_i\ell_i$, and therefore $a=\max_\cY U$.

The last claim follows from (iii) and the stopping rule $a-b\le\eps$.
\end{proof}

Note that $b$ is not monotone for an arbitrary transport, since $b^+-b=\vartheta(v_t-b)$ is negative when $v_t<b$. It becomes monotone because transports happen only above the threshold, which lies above $b$ (\cref{lem:thresholds}).

\subsection{Epochs and the threshold}

The run is divided into epochs. Epoch $j=0,1,\ldots$ starts at a moment when the algorithm sets $W\gets a-b$ and $h\gets b+W/2$; let $a_j$ and $b_j$ be the values of $a$ and $b$ at that moment, and let
\[
W_j=a_j-b_j,\qquad h_j=b_j+\frac{W_j}2 .
\]
A new epoch starts before a query as soon as $a-b\le\frac23W_j$, and the algorithm stops as soon as $a-b\le\eps$. Consequently, immediately before every query of epoch $j$,
\begin{equation}\label{eq:inepoch}
a-b\ >\ \max\Big\{\eps,\ \frac23W_j\Big\}.
\end{equation}
For the first query of the epoch this holds because $a-b=W_j>\eps$.

\begin{lemma}[position of the threshold]\label{lem:thresholds}
Immediately before every query of epoch $j$,
\begin{equation}\label{eq:thresholds}
h_j-b\ >\ \frac{W_j}6
\qquad\text{and}\qquad
a-h_j\ >\ \frac{W_j}6 .
\end{equation}
Moreover, $b$ never decreases, and $a\le a_j$, $b\ge b_j$ during epoch $j$.
\end{lemma}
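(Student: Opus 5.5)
The plan is an induction over the queries of epoch $j$, using \eqref{eq:inepoch} together with the monotonicity of $a$ (\cref{lem:invariants}(iv)). We prove the two monotonicity claims first, since both inequalities in \eqref{eq:thresholds} rely on them.

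\emph{Monotonicity.} The value $a$ never increases by \cref{lem:invariants}(iv), so $a\le a_j$ throughout epoch $j$. The value $b$ changes only in a transport query, where $v>h_j$, and then $b^+-b=\vartheta(v-b)$ with $\vartheta\in(0,1)$. So it suffices to show $b<h_j$ immediately before every query of the epoch. Then $v>h_j>b$ gives $b^+>b$, and $b$ never decreases within an epoch. Changing epoch does not touch $b$, so $b$ never decreases overall, and $b\ge b_j$ during epoch $j$.

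\emph{The first inequality.} Immediately before any query of epoch $j$, we have $a-b>\frac23W_j$ by \eqref{eq:inepoch}, and $a\le a_j=b_j+W_j$. Hence
\[
b<a-\tfrac23W_j\le b_j+\tfrac13W_j=h_j-\tfrac16W_j ,
\]
which is $h_j-b>W_j/6$. The only ingredient is $a\le a_j$, which holds unconditionally. This also gives the bound $b<h_j$ needed in the monotonicity step, so there is no circularity: the induction over queries is in fact unnecessary.

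\emph{The second inequality.} We need $a-h_j>W_j/6$, i.e. $a>b_j+\frac23W_j$. From \eqref{eq:inepoch}, $a>b+\frac23W_j$. Since $b\ge b_j$ by the monotonicity just established, $a>b_j+\frac23W_j=h_j+\frac16W_j$. The only delicate point is ordering the arguments: establish the upper bound on $b$ from $a\le a_j$ first, then deduce monotonicity of $b$, and only then the lower bound on $a$.
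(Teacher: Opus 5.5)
Your proof is correct and uses the same ingredients as the paper's: \eqref{eq:inepoch}, the monotonicity of $a$, and the fact that a transport has $v_t>h_j>b$ and therefore raises $b$. The only difference is organizational: the paper proves $b\ge b_j$ by induction over the queries of the epoch, whereas you note, correctly, that $h_j-b>W_j/6$ needs only $a\le a_j$, so monotonicity of $b$ follows directly and the second inequality then follows from $b\ge b_j$.
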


\begin{proof}
We first prove \eqref{eq:thresholds} under the assumption that $a\le a_j$ and $b\ge b_j$ at the moment in question, and then show that these inequalities indeed hold.

By \eqref{eq:inepoch} and $a\le a_j$,
\[
b\ <\ a-\frac23W_j\ \le\ a_j-\frac23W_j=b_j+\frac13W_j ,
\]
hence
\[
h_j-b\ >\ \Big(b_j+\frac{W_j}2\Big)-\Big(b_j+\frac{W_j}3\Big)=\frac{W_j}6 .
\]
Since $b\ge b_j$, we also have $h_j-b\le h_j-b_j=\frac{W_j}2$. Together with \eqref{eq:inepoch},
\[
a-h_j=(a-b)-(h_j-b)\ >\ \frac23W_j-\frac12W_j=\frac{W_j}6 .
\]

It remains to check $a\le a_j$ and $b\ge b_j$. The first holds because $a$ never increases (\cref{lem:invariants}(iv)). For the second, we show by induction over the queries of the epoch that $b$ does not decrease. At the start of the epoch $b=b_j$. If $b\ge b_j$ before a query, then \eqref{eq:thresholds} holds before it, so $h_j>b$. If the query is a transport, then $v_t>h_j>b$ and $b^+-b=\vartheta(v_t-b)>0$. If it is a cut, $b$ does not change. Hence $b\ge b_j$ before the next query as well. The value of $b$ does not change between epochs, so $b$ never decreases over the whole run.
\end{proof}

\begin{lemma}[number of epochs]\label{lem:epochs}
$W_{j+1}\le\frac23W_j$, and the number of epochs satisfies
\[
N_{\rm ep}\ \le\ 1+\Big\lceil\frac{\log(2B/\eps)}{\log(3/2)}\Big\rceil=O\Big(\log\frac B\eps\Big).
\]
\end{lemma}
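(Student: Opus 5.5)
The bound $W_{j+1}\le\frac23W_j$ follows directly from the epoch rule, and the count of epochs then comes from comparing the initial width with the stopping threshold. Epoch $j+1$ starts, by the test in \cref{alg:ct}, at the first moment before a query when $a-b\le\frac23W_j$. At that moment the algorithm sets $W_{j+1}=a-b$, so $W_{j+1}\le\frac23W_j$. One point needs care: the test is made only before a query, so the width at the start of the new epoch is the current $a-b$ and not some earlier value. The monotonicity facts of \cref{lem:thresholds,lem:invariants} ($a$ nonincreasing, $b$ nondecreasing) show that $a-b$ never increases over the run. Hence nothing between the moment the width drops below $\frac23W_j$ and the start of the next epoch can push $W_{j+1}$ above $\frac23W_j$. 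Iterating gives $W_j\le(\frac23)^jW_0$, and \cref{cor:start} gives $W_0\le2B$.

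For the count, consider the last epoch $J$. It was started, so the loop was still running at that moment, which means $W_J=a-b>\eps$ there. For the initial epoch this is the requirement $a-b>\eps$ to enter the loop; if $W_0\le\eps$, the algorithm stops with zero queries and the bound holds trivially. Combining the two facts gives $\eps<W_J\le(\frac23)^J\cdot2B$, so $J<\log(2B/\eps)/\log(3/2)$. Therefore $N_{\rm ep}=J+1\le1+\lceil\log(2B/\eps)/\log(3/2)\rceil$.

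Because $\eps<2B$, the logarithm is positive, so the expression is $O(\log\frac B\eps)$ in the convention of the appendix. The only step that requires attention is the first one, namely that the width recorded at the start of an epoch is the value at the time of the test and that the test triggers at the first eligible query. Both are immediate from the algorithm's control flow. An epoch ending because the algorithm stops contributes no further epochs.
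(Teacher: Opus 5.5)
Your proof is correct and follows the paper's argument. The epoch test fires when $a-b\le\frac23W_j$, which gives $W_{j+1}\le\frac23W_j$; then $W_0\le2B$ by \cref{cor:start}; and every started epoch has $W_j>\eps$ because the loop condition is checked first, which bounds the index of the last epoch. Your appeal to the monotonicity of $a-b$ is harmless but unnecessary: $W_{j+1}$ is by definition the value of $a-b$ at the very moment the test $a-b\le\frac23W_j$ succeeds.
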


\begin{proof}
Epoch $j+1$ starts when $a-b\le\frac23W_j$, and at that moment $W_{j+1}=a-b$. Hence $W_j\le(\frac23)^jW_0\le(\frac23)^j\cdot2B$, where $W_0\le2B$ by \cref{cor:start}. Every epoch starts with $W_j=a-b>\eps$, since otherwise the algorithm would have stopped. Therefore the last epoch $N_{\rm ep}-1$ satisfies $\eps<(\frac23)^{N_{\rm ep}-1}\cdot2B$, that is, $N_{\rm ep}-1<\log(2B/\eps)/\log(3/2)$.
\end{proof}

\subsection{Transports}

\begin{lemma}[number of transports]\label{lem:transports}
Every epoch contains at most $\lceil2/\vartheta\rceil$ transports, and
\[
\frac2\vartheta=2\Big(1+\frac1A\Big)\ \le\ 2+\frac{16\kappa}{K^2}.
\]
\end{lemma}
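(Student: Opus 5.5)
The plan is a potential argument on $b$ within a single epoch $j$, using \cref{lem:thresholds} as the main input. Immediately before every query of the epoch, $h_j-b>W_j/6$. In a transport, $v_t>h_j$, so the increase is
\[
b^+-b=\vartheta(v_t-b)>\vartheta(h_j-b)>\vartheta\,\frac{W_j}{6}.
\]
Cuts leave $b$ unchanged, and by \cref{lem:thresholds} $b$ never decreases. So each transport raises $b$ by more than $\vartheta W_j/6$, and cuts do not lower it.

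Next I cap the total growth of $b$ over the epoch. At the start, $b=b_j$. Before every query of the epoch, \eqref{eq:inepoch} and $a\le a_j$ give $b<a_j-\tfrac23W_j=b_j+W_j/3$, so just before its last transport $b$ is still below $b_j+W_j/3$. Suppose the epoch contains $N$ transports. The first $N-1$ of them occur strictly before the last one, so $(N-1)\vartheta W_j/6<W_j/3$, which gives $N-1<2/\vartheta$. Hence $N\le\lceil 2/\vartheta\rceil$.

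The step I expect to be the main obstacle is this off-by-one bookkeeping. A crude count gives $N\vartheta W_j/6<W_j/3+(\text{last jump})$, and the final jump could be large. The way around it is to apply the threshold bound before the last transport, not after it, and only then take the ceiling.

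The numerical bound is algebra. Since $\vartheta=A/(1+A)$, we have $2/\vartheta=2(1+1/A)$. With $A=K(K+3)/(8\kappa)\ge K^2/(8\kappa)$, this gives $1/A\le 8\kappa/K^2$, and so $2/\vartheta\le 2+16\kappa/K^2$.
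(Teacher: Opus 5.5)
Your proposal is correct and follows the paper's proof essentially step by step. Both arguments bound the gain per transport by $\vartheta(v_t-b)>\vartheta W_j/6$, apply the cap $b<b_j+W_j/3$ just before the last transport to get $(N-1)\vartheta/6<1/3$, and finish with the same algebra $1/\vartheta=1+1/A\le1+8\kappa/K^2$.
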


\begin{proof}
\emph{Each transport increases $b$ by a fixed fraction of $W_j$.} In a transport of epoch $j$, $v_t>h_j$, so by \cref{cor:transport} and \eqref{eq:thresholds}
\[
b^+-b=\vartheta(v_t-b)\ >\ \vartheta(h_j-b)\ >\ \frac{\vartheta W_j}6 .
\]
\emph{The total increase of $b$ within an epoch is limited.} Before every query of epoch $j$ we showed in the proof of \cref{lem:thresholds} that $b<b_j+\frac13W_j$. Suppose epoch $j$ contains $n$ transports. Before the last of them, $b$ has already grown by more than $(n-1)\frac{\vartheta W_j}6$ and is still below $b_j+\frac13W_j$. Hence $(n-1)\frac{\vartheta}6<\frac13$, that is, $n<1+\frac2\vartheta$, and so $n\le\lceil2/\vartheta\rceil$.

\emph{The bound on $2/\vartheta$.} By definition, $\frac1\vartheta=\frac{1+A}A=1+\frac1A$, and $A=\frac{K(K+3)}{8\kappa}\ge\frac{K^2}{8\kappa}$, so $\frac1A\le\frac{8\kappa}{K^2}$.
\end{proof}

\subsection{Cuts}

The number of cuts is controlled by the volume of the localizer. Let
\[
V(t)=\vol S_t,\qquad S_t=\{y\in\cY:\ U_t(y)\ge h_{j(t)}\},
\]
where $j(t)$ is the epoch of query $t$, and $U_t$ is the upper model immediately before query $t$.

\begin{lemma}[volume of the localizer]\label{lem:volume}
The following hold.
\begin{enumerate}[label=\textup{(\alph*)},leftmargin=2em,itemsep=1pt]
\item \emph{Lower bound.} $V(t)\ge\big(\frac{\eps}{6B}\big)^m\vol\cY$ for every $t$. In particular, $S_t$ is a convex body in $\aff\cY$, and its center of gravity $y_t$ is well defined and lies in $\cY$.
\item \emph{Cuts shrink the localizer.} If query $t$ is a cut and query $t+1$ belongs to the same epoch, then $V(t+1)\le(1-e^{-1})V(t)$.
\item \emph{Transports do not enlarge it.} If query $t$ is a transport and query $t+1$ belongs to the same epoch, then $V(t+1)\le V(t)$.
\item \emph{A change of epoch enlarges it by at most $6^m$.} If query $t$ is the last query of epoch $j$ and query $t+1$ is the first query of epoch $j+1$, then $V(t+1)\le6^mV(t)$.
\end{enumerate}
\end{lemma}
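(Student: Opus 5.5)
The four parts are proved separately. Each one uses the concavity and $\My$-Lipschitz property of $U$ (\cref{lem:invariants}(i)), the threshold bounds \eqref{eq:thresholds}, and Grünbaum's lemma.

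\emph{(a) Lower bound.} Fix a query $t$ in epoch $j$. Take $y^\circ\in\argmax_\cY U_t$, so $U_t(y^\circ)=a$. By \eqref{eq:thresholds} and \eqref{eq:inepoch}, we have $a-h_j>W_j/6$ and $W_j\ge a-b>\eps$, hence $a-h_j>\eps/6$. Put $\lambda=\eps/(6B)\in(0,1)$ and $S=y^\circ+\lambda(\cY-y^\circ)\subseteq\cY$. For $y=y^\circ+\lambda(y''-y^\circ)$ with $y''\in\cY$, concavity and the Lipschitz bound give $U_t(y)\ge(1-\lambda)a+\lambda(a-B)=a-\lambda B>h_j$. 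Hence $S\subseteq S_t$ and $V(t)\ge\lambda^m\vol\cY>0$. The set $S_t$ is convex and closed, since it is a superlevel set of a concave continuous function on a compact set. It therefore has nonempty relative interior in $\aff\cY$, and its centroid lies in $S_t\subseteq\cY$.

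\emph{(b) Cuts.} Here $v_t\le h_j$. Within an epoch $h$ is fixed, and $U_{t+1}=\min(U_t,\ell_t)$, so $S_{t+1}=S_t\cap\{\ell_t\ge h_j\}$. If $\ell_t(y)\ge h_j\ge v_t$, then $\ip{g_t}{y-y_t}\ge0$, so $S_{t+1}\subseteq S_t\cap H$ with $H=\{y:\ip{g_t}{y-y_t}\ge0\}$. The boundary of $H$ passes through the centroid $y_t$, so \cref{lem:grunbaum} gives $V(t+1)\le(1-e^{-1})V(t)$. If $g_t=0$, then $\ell_t\equiv v_t\le h_j$, and $S_{t+1}$ is contained in $\{\ell_t\ge h_j\}$, which is empty or a set of zero volume. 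By (a) this cannot happen when $t+1$ lies in the same epoch, so the claim still holds.

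\emph{(c) Transports.} Adding a plane can only lower $U$, and $h$ is unchanged within the epoch. Hence $S_{t+1}\subseteq S_t$.

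\emph{(d) Change of epoch.} This is the main obstacle, because the threshold drops from $h_j$ to $h_{j+1}$ while the model also gains the plane $\ell_t$. Write $U'=U_{t+1}$ and $a'=\max U'$, and let $y'\in\argmax U'$. Since $U'\le U_t$, we get $S_{t+1}\subseteq\{U_t\ge h_{j+1}\}$, so it suffices to compare two superlevel sets of the single concave function $U_t$. I would use a homothety centered at a maximizer $y^\circ$ of $U_t$. For $y$ with $U_t(y)\ge h_{j+1}$, set $y_\lambda=y^\circ+\lambda(y-y^\circ)$. Concavity gives
\[
U_t(y_\lambda)\ge(1-\lambda)a_t+\lambda h_{j+1},
\]
and this is at least $h_j$ provided $\lambda\le(a_t-h_j)/(a_t-h_{j+1})$. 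Then $y^\circ+\lambda(\{U_t\ge h_{j+1}\}-y^\circ)\subseteq S_t$, so $V(t+1)\le\lambda^{-m}V(t)$.

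It remains to show that this ratio is at least $1/6$. The numerator satisfies $a_t-h_j>W_j/6$ by \eqref{eq:thresholds}. For the denominator, $a_t\le a_j$ and $h_{j+1}=b'+W_{j+1}/2\ge b_j$, because $b$ never decreases (\cref{lem:thresholds}). Hence $a_t-h_{j+1}\le a_j-b_j=W_j$, the ratio exceeds $1/6$, and $V(t+1)\le6^mV(t)$.

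The delicate points are that $b$ never decreases (this gives $h_{j+1}\ge b_j$) and that the threshold bound is applied before query $t$, which is legitimate since $t$ is a query of epoch $j$.
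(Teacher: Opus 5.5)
Your proposal is correct and follows the paper's proof essentially step by step. It uses the same homothety about a maximizer of $U_t$ in (a) and (d), the same central cut with Grünbaum's lemma in (b), and the same inclusion $S_{t+1}\subseteq S_t$ in (c).

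Two small points need repair.

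\emph{The degenerate case of (b).} When $g_t=0$ and $v_t=h_j$, the set $\{\ell_t\ge h_j\}$ is all of $\cY$, not ``empty or of zero volume''. In that situation $S_{t+1}$ can even equal $S_t$. Argue instead, as the paper does, that after query $t$ one has $\max_\cY U_{t+1}\le v_t\le h_j$. This contradicts $a-h_j>W_j/6$ from \eqref{eq:thresholds} if query $t+1$ were still in epoch $j$.

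\emph{The homothety ratio in (d).} Write $a_t=\max_\cY U_t$ in your notation. The concavity step needs $\lambda\le1$, so take $\lambda=\min\{1,(a_t-h_j)/(a_t-h_{j+1})\}$; the paper instead splits off the case $h_{j+1}\ge h_j$. Also note that the denominator is positive: $h_{j+1}=b_{j+1}+W_{j+1}/2<a_{j+1}\le a_t$, where $a_{j+1}$ is the value of $a$ at the start of epoch $j+1$. With these two additions, your bound $\lambda>1/6$ and hence $V(t+1)\le6^mV(t)$ go through.
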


\begin{proof}
Write $j=j(t)$, $U=U_t$, and let $a$ be the value of $\max_\cY U_t$.

(a) Let $y^\circ\in\argmax_\cY U$ and $\lambda=\min\{1,(a-h_j)/B\}$. Take any $y''\in\cY$ and put $y'=y^\circ+\lambda(y''-y^\circ)$, which lies in $\cY$ by convexity. By \cref{lem:invariants}(i), $U$ is $\My$-Lipschitz, so $U(y'')\ge a-\My\nrm{y''-y^\circ}\ge a-B$. By concavity of $U$,
\[
U(y')\ \ge\ (1-\lambda)U(y^\circ)+\lambda U(y'')\ \ge\ (1-\lambda)a+\lambda(a-B)=a-\lambda B\ \ge\ h_j ,
\]
where the last step uses $\lambda\le(a-h_j)/B$. Hence $y'\in S_t$, that is, $S_t$ contains the homothetic copy $y^\circ+\lambda(\cY-y^\circ)$ of $\cY$, and $V(t)\ge\lambda^m\vol\cY$. By \eqref{eq:thresholds}, $a-h_j>W_j/6$, and $W_j>\eps$ by \eqref{eq:inepoch}; hence $a-h_j>\eps/6$. Since also $\eps/(6B)<1$, we get $\lambda\ge\eps/(6B)$. Finally, $S_t$ is convex (a superlevel set of a concave function intersected with $\cY$), closed, and has positive volume, so it is a convex body.

(b) The threshold is the same for queries $t$ and $t+1$, and $U_{t+1}=\min\{U_t,\ell_t\}$. Hence
\[
S_{t+1}=S_t\cap\{y\in\cY:\ \ell_t(y)\ge h_j\}.
\]
If $\ell_t(y)\ge h_j$, then $\ip{g_t}{y-y_t}=\ell_t(y)-v_t\ge h_j-v_t\ge0$, because query $t$ is a cut. So $S_{t+1}\subseteq S_t\cap H$ with $H=\{y:\ip{g_t}{y-y_t}\ge0\}$. The boundary of $H$ passes through the center of gravity $y_t$ of $S_t$, and \cref{lem:grunbaum} applied in $\aff\cY$ gives $V(t+1)\le(1-e^{-1})V(t)$.

This argument needs $H\cap\aff\cY$ to be a proper half-space of $\aff\cY$, that is, $g_t$ must not be orthogonal to $\aff\cY$. If it is, then $\ell_t$ is constant on $\cY$ and equal to $v_t\le h_j$, so after query $t$ we have $a\le v_t\le h_j$ and $a-b\le h_j-b\le h_j-b_j=\frac{W_j}2<\frac23W_j$. Then query $t$ is the last query of its epoch, and (b) does not apply to it.

(c) Again the threshold is unchanged and $U_{t+1}\le U_t$, so $S_{t+1}\subseteq S_t$.

(d) Let $a'$ be the value of $a$ before query $t$ and $y^\circ\in\argmax_\cY U_t$. Since $U_{t+1}\le U_t$,
\[
S_{t+1}=\{U_{t+1}\ge h_{j+1}\}\subseteq\{y\in\cY:\ U_t(y)\ge h_{j+1}\}.
\]
If $h_{j+1}\ge h_j$, the right-hand side is contained in $S_t$ and we are done. Otherwise $h_{j+1}<h_j<a'$, where the last inequality is \eqref{eq:thresholds}. Put
\[
\lambda=\frac{a'-h_j}{a'-h_{j+1}}\in(0,1].
\]
Take $y\in\cY$ with $U_t(y)\ge h_{j+1}$. By concavity,
\[
U_t\big(y^\circ+\lambda(y-y^\circ)\big)\ \ge\ (1-\lambda)a'+\lambda h_{j+1}=a'-\lambda(a'-h_{j+1})=h_j ,
\]
so $y^\circ+\lambda(y-y^\circ)\in S_t$, that is, $y\in y^\circ+\lambda^{-1}(S_t-y^\circ)$. Hence $\{U_t\ge h_{j+1}\}$ is contained in a homothetic copy of $S_t$ with ratio $\lambda^{-1}$, and $V(t+1)\le\lambda^{-m}V(t)$. It remains to bound $\lambda^{-1}$. For the numerator, $a'\le a_j$ and $h_{j+1}\ge b_{j+1}\ge b_j$ (\cref{lem:thresholds}), so $a'-h_{j+1}\le a_j-b_j=W_j$. For the denominator, $a'-h_j>W_j/6$ by \eqref{eq:thresholds}. Therefore $\lambda^{-1}<6$.
\end{proof}

\begin{lemma}[number of cuts]\label{lem:cuts}
The total number $C$ of cuts satisfies
\[
C\ \le\ N_{\rm ep}+\frac{m\log\frac{6B}{\eps}+mN_{\rm ep}\log6}{\log\frac{e}{e-1}}=O\Big(m\log\frac B\eps\Big).
\]
\end{lemma}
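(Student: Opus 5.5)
The plan is a potential argument on $\log V(t)$ over the whole run. The upper end is $V(t)\le\vol\cY$ combined with the growth allowed at epoch changes; the lower end is \cref{lem:volume}(a). Order the queries $t=1,\dots,N$. For consecutive queries $t,t+1$ there are four cases, given by \cref{lem:volume}(b)--(d). If $t$ is a cut and $t+1$ is in the same epoch, then $\log V$ drops by at least $\log\frac{e}{e-1}$. If $t$ is a transport in the same epoch, $\log V$ does not increase. If $t+1$ starts a new epoch, $\log V$ increases by at most $m\log 6$.

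Call a cut \emph{terminal} if it is the last query of its epoch, or the last query of the run. There is at most one terminal cut per epoch, hence at most $N_{\rm ep}$ of them; this accounts for the additive $N_{\rm ep}$ in the bound. Every other cut $t$ has a successor $t+1$ in the same epoch, so \cref{lem:volume}(b) applies to it. This also covers the degenerate case $g_t\perp\aff\cY$: the proof of (b) shows that such a cut ends its epoch, so it is terminal.

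Next, telescope. Summing the per-step changes from $t=1$ to $t=N$ gives
\[
\log V(N)-\log V(1)\ \le\ -(C-N_{\rm ep})\log\tfrac{e}{e-1}+(N_{\rm ep}-1)\,m\log6 .
\]
Bound $V(1)\le\vol\cY$, since $S_1\subseteq\cY$. Bound $V(N)\ge(\eps/6B)^m\vol\cY$ by \cref{lem:volume}(a). Rearranging gives
\[
C\ \le\ N_{\rm ep}+\frac{m\log\frac{6B}{\eps}+mN_{\rm ep}\log6}{\log\frac{e}{e-1}},
\]
after replacing $N_{\rm ep}-1$ by $N_{\rm ep}$. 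Then \cref{lem:epochs} gives $N_{\rm ep}=O(\log\frac B\eps)$, and since $\eps<2B$ the whole bound is $O(m\log\frac B\eps)$.

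The step that needs the most care is the bookkeeping at epoch boundaries. First, the volume must be compared only between the localizers of consecutive queries, because thresholds change exactly at the boundaries. Second, every query must lie in an epoch that actually had a query; an epoch could start and stop immediately, but it then contributes no queries and no volume change. Third, the final query has no successor, so it is counted among the terminal cuts rather than in the telescoping sum. Once this case analysis is in place, the rest is arithmetic.
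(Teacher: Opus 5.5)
Your proposal is correct and follows the paper's proof essentially step for step. You telescope $V(t)$ over consecutive queries using \cref{lem:volume}(b)--(d), charge at most $N_{\rm ep}$ cuts as last queries of their epochs, bound the $6^m$ growth by the at most $N_{\rm ep}-1$ epoch changes, and close with $V(1)\le\vol\cY$ and the lower bound of \cref{lem:volume}(a) at $t=N$; your handling of the degenerate cut with $g_t\perp\aff\cY$ and of the final query matches the paper's bookkeeping as well.
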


\begin{proof}
Follow $V(t)$ from $t=1$ to $t=N$. At the start, $V(1)\le\vol\cY$. For each $t<N$, \cref{lem:volume}(b)--(d) gives:
\begin{itemize}[leftmargin=1.2em,itemsep=1pt]
\item a factor at most $1-e^{-1}$ if query $t$ is a cut and not the last query of its epoch;
\item a factor at most $1$ if query $t$ is a transport and not the last query of its epoch;
\item a factor at most $6^m$ if query $t$ is the last query of its epoch.
\end{itemize}
Every epoch has exactly one last query, and query $N$ is the last query of the last epoch. Hence at most $N_{\rm ep}$ cuts are last queries of their epochs, at least $C-N_{\rm ep}$ cuts contribute the factor $1-e^{-1}$, and at most $N_{\rm ep}-1\le N_{\rm ep}$ changes of epoch contribute the factor $6^m$. Combining this with the lower bound of \cref{lem:volume}(a) at $t=N$,
\[
\Big(\frac{\eps}{6B}\Big)^m\vol\cY\ \le\ V(N)\ \le\ (1-e^{-1})^{C-N_{\rm ep}}\,6^{mN_{\rm ep}}\vol\cY .
\]
Dividing by $\vol\cY$ and taking logarithms,
\[
(C-N_{\rm ep})\log\frac{e}{e-1}\ \le\ m\log\frac{6B}{\eps}+mN_{\rm ep}\log6 ,
\]
which is the claim. The $O(\cdot)$ bound follows from $N_{\rm ep}=O(\log\frac B\eps)$ (\cref{lem:epochs}).
\end{proof}

\subsection{Proof of \texorpdfstring{\cref{thm:general}}{the theorem}}

\emph{Correctness.} By \cref{lem:invariants}, the output satisfies $\Gap(\bar x,y_b)\le\eps$. This holds for every $K\ge1$: the block length enters only the number of queries.

\emph{Number of queries.} Every query is a cut or a transport. By \cref{lem:cuts,lem:transports,lem:epochs},
\[
N\ \le\ C+N_{\rm ep}\Big\lceil\frac2\vartheta\Big\rceil
\ \le\ O\Big(m\log\frac B\eps\Big)+N_{\rm ep}\Big(3+\frac{16\kappa}{K^2}\Big)
\ =\ O\Big(\Big(m+\frac{\kappa}{K^2}\Big)\log\frac B\eps\Big),
\]
where the constant term $3N_{\rm ep}$ is absorbed into $mN_{\rm ep}$ because $m\ge1$. In particular, the algorithm terminates.

\emph{Oracle calls.} Initialization costs $I_0$ calls to $\Ox$ and one call to $\Oy$ (\cref{cor:start}), and every query costs $K$ calls to $\Ox$ and one call to $\Oy$. Therefore
\[
\Ty=1+N=O\Big(\Big(m+\frac{\kappa}{K^2}\Big)\log\frac B\eps\Big),
\qquad
\Tx=I_0+KN=I_0+O\Big(\Big(mK+\frac{\kappa}{K}\Big)\log\frac B\eps\Big),
\]
where we used $K\cdot\frac{\kappa}{K^2}=\frac\kappa K$. This is the first part of \cref{thm:general}.

\emph{The default block length.} Let $K=\lceil\sqrt{8\kappa/m}\,\rceil$. Then $K\ge\sqrt{8\kappa/m}$, so $\frac{\kappa}{K^2}\le\frac m8$ and $\frac\kappa K\le\sqrt{\frac{m\kappa}8}$. Also $K\le\sqrt{8\kappa/m}+1$, so $mK\le\sqrt{8m\kappa}+m$. Substituting,
\[
\Ty=O\Big(m\log\frac B\eps\Big),
\qquad
\Tx=I_0+O\Big(\big(m+\sqrt{m\kappa}\big)\log\frac B\eps\Big),
\]
which is the second part of \cref{thm:general}. \hfill$\square$

\subsection{Proof of \texorpdfstring{\cref{cor:onedim}}{the one-dimensional corollary}}

For $m=1$ the default block length is $K=\lceil\sqrt{8\kappa}\,\rceil$. Since $\kappa\ge1$, we have $m+\sqrt{m\kappa}=1+\sqrt\kappa\le2\sqrt\kappa$, and the second part of \cref{thm:general} gives
\[
\Ty=O\Big(\log\frac B\eps\Big),
\qquad
\Tx=I_0+O\Big(\sqrt\kappa\log\frac B\eps\Big)=O\Big(\sqrt\kappa\Big[1+\logp\frac{\mu\Rx^2}{B}+\log\frac B\eps\Big]\Big),
\]
where we inserted $I_0$ from \cref{cor:start}. \hfill$\square$

\begin{remark}[the block length and the limit of this analysis]
No correctness argument above uses the value of $K$: the transported triple is a lower model for every $K$ (\cref{cor:transport}), and the certificate $\Gap\le a-b$ can be checked at any moment (\cref{lem:invariants}). The block length affects only the number of queries, through the trade-off between the $mK$ gradients spent on cuts and the $\kappa/K$ gradients spent on transports. The choice $K\asymp\sqrt{\kappa/m}$ balances these two terms, so to go below $\sqrt{m\kappa}$ in general one needs a different measure of progress. Under strong concavity in $y$, the method of \cref{sec:strong} provides one.
\end{remark}

\section{Proof of \texorpdfstring{\cref{thm:cross}}{the strongly concave bound}: accelerated two-point method}\label{app:minmax-cross}

Throughout this appendix \cref{ass:minmax,ass:cross} hold. Recall that $f(x,\cdot)$ is $\nu$-strongly concave, $\nabla_xf$ is $\ell$-Lipschitz in $y$, and $\chi=\ell^2/(\mu\nu)$. We use the constants
\[
\gamma_\chi\defeq\frac{\ell^2}{\nu}=\mu\chi,\qquad \bar\chi\defeq1+\chi,\qquad \Gamma\defeq\mu+\gamma_\chi=\mu \bar\chi .
\]
Since $\cX$ and $\cY$ are compact and $f$ is strongly convex in $x$ and strongly concave in $y$, the saddle point $(x^\star,y^\star)$ exists and is unique, and
\[
P^\star\defeq P(x^\star)=f(x^\star,y^\star)=\phi(y^\star).
\]
For $x\in\cX$, let $y^\star(x)=\argmax_\cY f(x,\cdot)$; the maximizer is unique by strong concavity.

We will use two standard consequences of strong convexity and concavity on a convex set. If $x'$ minimizes a $\mu$-strongly convex function $h$ over $\cX$, then first-order optimality gives $\ip{\nabla h(x')}{x-x'}\ge0$ for $x\in\cX$, and therefore
\begin{equation}\label{eq:qg}
h(x)-h(x')\ \ge\ \frac\mu2\nrm{x-x'}^2\qquad\forall x\in\cX .
\end{equation}
Symmetrically, if $y'$ maximizes a $\nu$-strongly concave function $h$ over $\cY$, then $h(y')-h(y)\ge\frac\nu2\nrm{y-y'}^2$ for all $y\in\cY$. In particular, $P=\max_yf(\cdot,y)$ is $\mu$-strongly convex as a maximum of $\mu$-strongly convex functions, and $P(x)-P^\star\ge\frac\mu2\nrm{x-x^\star}^2$.

The method analyzed in this appendix is \cref{alg:cross}. Its outer loop halves an upper bound $E$ on the primal error $P(w)-P^\star$; each halving is one \emph{outer block} of $K$ steps, and each step makes one call to a solver $\mathsf{MaxY}$ of the problem in $y$ and one run of \cref{alg:block} on a proximal subproblem.

\begin{algorithm}[tb]
\caption{Two-point acceleration under strong concavity}\label{alg:cross}
\begin{algorithmic}[1]
\Require $\eps>0$; constants $L,\mu,\chi,B$; a solver $\mathsf{MaxY}(x,\delta)$ for the problem in $y$
\Ensure $(\hat x,\hat y)$ with $\Gap(\hat x,\hat y)\le\eps$
\State $\bar\chi\gets1+\chi$;\ \ $\Gamma\gets\mu \bar\chi$;\ \ $L_\Phi\gets1+L/(4\Gamma)$
\State $k\gets\lceil2\sqrt{L_\Phi}\,\rceil$;\ \ $A_\Phi\gets k(k+3)/(4L_\Phi)$;\ \ $\theta\gets A_\Phi/(1+A_\Phi)$
\State $K\gets\lceil8\sqrt{\bar\chi/\theta}\,\rceil$;\ \ $c_\delta\gets \bar\chi/(4K^3)$;\ \ $E\gets2B$
\State Find $w$ with $P(w)-P^\star\le E$ (\cref{lem:init})
\While{$E>\eps/(6\bar\chi^2)$}
  \State $A\gets0$;\ \ $u,z\gets w$;\ \ $\delta\gets c_\delta E$
  \For{$i=1,\ldots,K$}
    \State Find $a>0$ from $a^2=\theta(A+a)/(4\Gamma)$;\ \ $A^+\gets A+a$
    \State $w_i(p)\gets(Au+ap)/A^+$;\ \ $v\gets w_i(z)$
    \State $y_i\gets\mathsf{MaxY}(v,\delta)$ \Comment{$P(v)-f(v,y_i)\le\delta$}
    \State $\Phi_i(p)\gets(A^+/\theta)\,f(w_i(p),y_i)+\frac12\nrm{p-z}^2$
    \State $(p_i,z^+,\_)\gets\textsc{Block}\big(\Phi_i,\ \tfrac12\nrm{\cdot-z}^2,\ k\big)$ \Comment{$\mu_F=1+\frac{\mu}{4\Gamma}$, $L_F=L_\Phi$}
    \State $u\gets w_i(p_i)$;\ \ $z\gets z^+$;\ \ $A\gets A^+$
  \EndFor
  \State $w\gets u$;\ \ $E\gets E/2$
\EndWhile
\State $\hat x\gets w$;\ \ $\hat y\gets\mathsf{MaxY}(\hat x,c_\delta E)$;\ \ \Return $(\hat x,\hat y)$
\end{algorithmic}
\end{algorithm}

\subsection{Cross-Lipschitz estimate}

The coupling assumption enters the proof only through the following inequality. For $x,v\in\cX$ put $\psi_{x,v}(y)=f(x,y)-f(v,y)$. Then
\begin{equation}\label{eq:crosslip}
\big|\psi_{x,v}(y)-\psi_{x,v}(y')\big|\ \le\ \ell\,\nrm{x-v}\,\nrm{y-y'}\qquad\forall y,y'\in\cY .
\end{equation}
Indeed, let $v_t=v+t(x-v)\in\cX$ for $t\in[0,1]$. By the fundamental theorem of calculus applied to $t\mapsto f(v_t,y)$,
\[
\psi_{x,v}(y)=f(x,y)-f(v,y)=\int_0^1\ip{\nabla_xf(v_t,y)}{x-v}\,dt ,
\]
and similarly for $y'$. Subtracting and using the Cauchy--Schwarz inequality and \cref{ass:cross},
\[
\big|\psi_{x,v}(y)-\psi_{x,v}(y')\big|=\Big|\int_0^1\ip{\nabla_xf(v_t,y)-\nabla_xf(v_t,y')}{x-v}\,dt\Big|\le\ell\nrm{y-y'}\nrm{x-v}.
\]

\subsection{A two-sided model of the primal function}

\begin{lemma}[two-sided model]\label{lem:model}
Let $v\in\cX$, and let $\tilde y\in\cY$ satisfy $P(v)-f(v,\tilde y)\le\delta$. Then for all $x\in\cX$
\[
f(x,\tilde y)\ \le\ P(x)\ \le\ f(x,\tilde y)+\gamma_\chi\nrm{x-v}^2+2\delta .
\]
\end{lemma}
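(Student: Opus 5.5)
The lower bound is immediate from the definition $P(x)=\max_{y\in\cY}f(x,y)\ge f(x,\tilde y)$. For the upper bound, I plan to compare $f(x,\cdot)$ with $f(v,\cdot)$ via the cross-Lipschitz estimate \eqref{eq:crosslip}. Strong concavity of $f(v,\cdot)$ will give the negative term that absorbs the cross term.

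Fix $x\in\cX$ and write $y_x\defeq y^\star(x)$, so that $P(x)=f(x,y_x)$, and $y_v\defeq y^\star(v)$. I decompose
\[
P(x)-f(x,\tilde y)=\big[f(v,y_x)-f(v,\tilde y)\big]+\big[\psi_{x,v}(y_x)-\psi_{x,v}(\tilde y)\big].
\]
For the first bracket I use the quadratic-growth consequence of $\nu$-strong concavity stated before \eqref{eq:qg}, namely $f(v,y_x)\le P(v)-\frac\nu2\nrm{y_x-y_v}^2$. Combined with $P(v)-f(v,\tilde y)\le\delta$, this bounds the first bracket by $\delta-\frac\nu2\nrm{y_x-y_v}^2$.

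For the second bracket, \eqref{eq:crosslip} gives the bound $\ell\nrm{x-v}\,\nrm{y_x-\tilde y}$. By the triangle inequality this is at most $\ell d\,(r+s)$, where $d=\nrm{x-v}$, $r=\nrm{y_x-y_v}$ and $s=\nrm{y_v-\tilde y}$. The same quadratic growth applied at $\tilde y$ gives $\frac\nu2 s^2\le P(v)-f(v,\tilde y)\le\delta$, so $s\le\sqrt{2\delta/\nu}$.

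It remains to balance terms with Young's inequality, which is the only delicate step since the constants must come out exactly as $\gamma_\chi$ and $2\delta$:
\[
\ell d r-\tfrac\nu2r^2\le\frac{\ell^2d^2}{2\nu},\qquad \ell d\sqrt{2\delta/\nu}\le\frac{\ell^2d^2}{2\nu}+\delta .
\]
Summing the two brackets then yields
\[
P(x)-f(x,\tilde y)\le\delta+\frac{\ell^2d^2}{2\nu}+\frac{\ell^2d^2}{2\nu}+\delta=\gamma_\chi\nrm{x-v}^2+2\delta,
\]
using $\gamma_\chi=\ell^2/\nu$. This is the claimed upper bound.
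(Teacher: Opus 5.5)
Your proposal is correct and takes essentially the same route as the paper. It uses the same split of $P(x)-f(x,\tilde y)$ into an $f(v,\cdot)$ bracket and a $\psi_{x,v}$ bracket, strong concavity at $v$ to get both $\nrm{\tilde y-y_v}^2\le 2\delta/\nu$ and the absorbing term $-\frac\nu2\nrm{y_x-y_v}^2$, and the same two Young-type bounds, which give $\gamma_\chi\nrm{x-v}^2+2\delta$.
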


\begin{proof}
The left inequality is the definition of $P$. For the right one, fix $x\in\cX$ and write $d=\nrm{x-v}$, $y_x=y^\star(x)$, $y_v=y^\star(v)$, and $\psi=\psi_{x,v}$.

\emph{Step 1: two consequences of strong concavity at $v$.} Since $y_v$ maximizes the $\nu$-strongly concave function $f(v,\cdot)$,
\[
\frac\nu2\nrm{\tilde y-y_v}^2\le P(v)-f(v,\tilde y)\le\delta
\quad\Longrightarrow\quad
\nrm{\tilde y-y_v}^2\le\frac{2\delta}\nu,
\]
and, with $y=y_x$,
\[
f(v,y_x)\ \le\ P(v)-\frac\nu2\nrm{y_x-y_v}^2 .
\]

\emph{Step 2: splitting the error.} Since $f(x,\cdot)=f(v,\cdot)+\psi$,
\[
P(x)-f(x,\tilde y)=f(x,y_x)-f(x,\tilde y)=\big[\psi(y_x)-\psi(\tilde y)\big]+\big[f(v,y_x)-f(v,\tilde y)\big].
\]
By \eqref{eq:crosslip} and the triangle inequality, the first bracket is at most $\ell d\,\nrm{y_x-\tilde y}\le\ell d\big(\nrm{y_x-y_v}+\nrm{y_v-\tilde y}\big)$. By Step~1 and $P(v)-f(v,\tilde y)\le\delta$, the second bracket is at most $\delta-\frac\nu2\nrm{y_x-y_v}^2$. Hence, with $s=\nrm{y_x-y_v}$,
\[
P(x)-f(x,\tilde y)\ \le\ \Big[\ell ds-\frac\nu2s^2\Big]+\ell d\,\nrm{y_v-\tilde y}+\delta .
\]

\emph{Step 3: bounding the two terms.} The quadratic in $s$ is maximized at $s=\ell d/\nu$, so $\ell ds-\frac\nu2s^2\le\frac{\ell^2d^2}{2\nu}$. By the inequality $\alpha\beta\le\frac{\alpha^2}{2\nu}+\frac{\nu\beta^2}2$ and Step~1,
\[
\ell d\,\nrm{y_v-\tilde y}\ \le\ \frac{\ell^2d^2}{2\nu}+\frac\nu2\nrm{y_v-\tilde y}^2\ \le\ \frac{\ell^2d^2}{2\nu}+\delta .
\]
Adding up, $P(x)-f(x,\tilde y)\le\frac{\ell^2}\nu d^2+2\delta=\gamma_\chi\nrm{x-v}^2+2\delta$.
\end{proof}

The lemma says that one approximate maximizer $\tilde y$ computed at $v$ gives a model of $P$ that is exact from below and $\gamma_\chi$-smooth from above up to $2\delta$. This is an inexact model of $P$ in the sense of \citet{devolder2014first,stonyakin2021inexact}.

\subsection{From primal accuracy to the full gap}

\begin{lemma}[from primal accuracy to the gap]\label{lem:gapcross}
If $x\in\cX$ and $y\in\cY$ satisfy $P(x)-P^\star\le E$ and $P(x)-f(x,y)\le\delta$, then
\[
\Gap(x,y)\ \le\ E+\bar\chi (2\delta+4\chi E).
\]
\end{lemma}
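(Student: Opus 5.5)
The plan is to split the gap at the saddle value. Since $P(x)-P^\star\le E$,
\[
\Gap(x,y)=\big[P(x)-P^\star\big]+\big[P^\star-\phi(y)\big]\ \le\ E+D,\qquad D\defeq P^\star-\phi(y),
\]
so it suffices to show $D\le\bar\chi(2\delta+4\chi E)$. If $D\le0$ there is nothing to prove, so assume $D>0$. Let $x_y=\argmin_\cX f(\cdot,y)$, so $\phi(y)=f(x_y,y)$. The only tools are \cref{lem:model}, applied with $v=x$ and $\tilde y=y$ (its hypothesis is exactly $P(x)-f(x,y)\le\delta$), strong convexity of $f(\cdot,y)$, and $\mu$-strong convexity of $P$.

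\emph{Step 1: localize $x_y$.} By \eqref{eq:qg} for the $\mu$-strongly convex $f(\cdot,y)$,
\[
\tfrac\mu2\nrm{x-x_y}^2\le f(x,y)-\phi(y)\le P(x)-\phi(y)\le E+D .
\]
Hence $d^2\defeq\nrm{x_y-x}^2\le 2(E+D)/\mu$. This matters because \cref{lem:model} applied directly at $x_y$ only gives $f(x_y,y)\ge P(x_y)-\mu\chi d^2-2\delta$. Combined with the crude bound $P(x_y)\ge P^\star$, that yields $D\le 2\chi(E+D)+2\delta$, which is useless unless $\chi<1/2$.

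\emph{Step 2: damp the model error by a convex combination.} For $t\in(0,1]$ put $w_t=x+t(x_y-x)\in\cX$. Convexity of $f(\cdot,y)$ gives $f(w_t,y)\le(1-t)f(x,y)+t\,\phi(y)$. Next I bound the two values on the right:
\[
f(x,y)\le P(x)\le P^\star+E,
\qquad
f(w_t,y)\ \ge\ P(w_t)-\mu\chi t^2d^2-2\delta\ \ge\ P^\star-\mu\chi t^2d^2-2\delta ,
\]
where the second uses \cref{lem:model} with $\nrm{w_t-x}=td$. Substituting both into the convexity inequality and dividing by $t$ gives
\[
tD\ \le\ (1-t)E+\mu\chi t^2d^2+2\delta .
\]
Inserting Step 1 then yields
\[
D\ \le\ 2\chi t(E+D)+\frac{2\delta+(1-t)E}{t}.
\]
The key point is that the quadratic model error now carries a factor $t$ relative to the linear gain. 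If the constant needs sharpening, I would keep the neglected term $\tfrac\mu2\nrm{w_t-x^\star}^2$ in $P(w_t)$.

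\emph{Step 3: choose $t$ and solve for $D$.} I take $t$ of order $1/\bar\chi$, e.g.\ $t=1/(2\bar\chi)$, so that $2\chi t=\chi/\bar\chi<1$. Moving the $D$-term to the left leaves the coefficient $1-\chi/\bar\chi=1/\bar\chi$. Multiplying through by $\bar\chi$ gives a bound of the form $D\le c_1\bar\chi\,\delta+c_2\bar\chi\chi E$ with absolute constants, which is the shape of the claim.

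\textbf{Main obstacle.} The work lies in getting exactly $\bar\chi(2\delta+4\chi E)$. With the crude choice above, the term $(1-t)E/t$ produces a leftover $E$ term whose coefficient is not a multiple of $\chi$. That term must be absorbed, and if absorption fails the bound is off when $\chi\to0$. When $\chi$ is small, the bound should degrade gracefully to $D\le2\delta$.

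To handle this I would first try to replace the bound $f(x,y)\le P^\star+E$ by something that cancels against the $P^\star$ lower bound. Concretely, I would use $P(w_t)\ge P(x)-\dots$ via strong convexity of $P$ around $x^\star$, together with $\nrm{x-x^\star}^2\le2E/\mu$. The $E$-contribution then enters only through cross terms multiplied by $\chi$. After that I would optimize $t$ exactly, with $t\to1$ as $\chi\to0$, checking that the resulting constants are $2\bar\chi$ on $\delta$ and $4\bar\chi\chi$ on $E$.
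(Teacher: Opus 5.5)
\textbf{There is a genuine gap: your route cannot reach the stated constant on $\delta$.} Your Steps 1 and 2 are correct: for every $t\in(0,1]$ you do get $D\le\frac{1-t}{t}E+2\chi t(E+D)+\frac{2\delta}{t}$. The problems come after that.

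\emph{Step 3 is miscomputed.} With $t=1/(2\bar\chi)$ the inequality gives $D\le4\bar\chi^2\delta+\big(\bar\chi(2\bar\chi-1)+\chi\big)E$. At $\chi=0$ this reads $D\le4\delta+E$, so it is not of the form $c_1\bar\chi\delta+c_2\bar\chi\chi E$.

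\emph{The bare $E$ term is not the real obstacle.} Take $t=1/(1+4\chi)$, so that $(1-t)/t=4\chi$. You then get $D\le8\chi E+\frac{2(1+4\chi)^2}{1+2\chi}\,\delta$, with no cancellation trick via strong convexity of $P$.

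\emph{The real obstruction is the coefficient of $\delta$.} Along your chain it equals $2/\big(t(1-2\chi t)\big)$. For every admissible $t$ this is strictly larger than $2\bar\chi$ once $\chi>0$, and at least $16\chi$ once $\chi\ge1/4$. Sharpening the convexity step to strong convexity, $f(w_t,y)\le(1-t)f(x,y)+t\phi(y)-\frac\mu2t(1-t)d^2$, does help. With $t=1/(1+2\chi)$ the $d^2$ terms cancel and you obtain $\Gap(x,y)\le(1+2\chi)(E+2\delta)$. However, $2(1+2\chi)$ is the smallest $\delta$-coefficient these inequalities give for any $t$. The resulting bound implies the lemma only when $\delta\le(1+2\chi)E$. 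That would suffice for \cref{thm:cross}, where $\delta=c_\delta E$, but it is not the stated inequality. So your plan to check that the constants come out as $2\bar\chi$ and $4\bar\chi\chi$ cannot succeed with these ingredients.

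\emph{The missing idea is to anchor at the saddle point and work in $y$,} rather than along the segment $[x,x_y]$ through \cref{lem:model}.
\begin{itemize}
\item The paper puts $D'=f(x^\star,y^\star)-f(x^\star,y)$ and splits it as $[f(x,y^\star)-f(x,y)]+[\psi_{x,x^\star}(y)-\psi_{x,x^\star}(y^\star)]$. By \eqref{eq:crosslip} this is at most $\delta+\ell\nrm{y-y^\star}\nrm{x-x^\star}$.
\item Strong concavity of $f(x^\star,\cdot)$ gives $\nrm{y-y^\star}^2\le2D'/\nu$, and $\nrm{x-x^\star}^2\le2E/\mu$. Hence $E$ enters only through the cross term $2\sqrt{\chi D'E}\le D'/2+2\chi E$, giving $D'\le2\delta+4\chi E$ with no bare $E$ and only a factor $2$ on $\delta$.
\item It then writes $P^\star-\phi(y)=D'+[f(x^\star,y)-f(x_y,y)]$. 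The bracket is at most $\chi D'$, by strong convexity of $f(\cdot,y)$, first-order optimality of $x^\star$ for $f(\cdot,y^\star)$, and the Lipschitz bound on $\nabla_xf$ in $y$. This step produces the overall factor $\bar\chi$.
\end{itemize}
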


\begin{proof}
Split the gap as $\Gap(x,y)=\big(P(x)-P^\star\big)+\big(P^\star-\phi(y)\big)$. The first term is at most $E$, so we bound the second one.

\emph{Step 1: how far $y$ is from $y^\star$.} Let $D=f(x^\star,y^\star)-f(x^\star,y)$. Since $y^\star$ maximizes the $\nu$-strongly concave function $f(x^\star,\cdot)$, we have $D\ge\frac\nu2\nrm{y-y^\star}^2$. To bound $D$ from above, add and subtract $f(x,y^\star)$ and $f(x,y)$:
\[
D=\underbrace{\big[f(x,y^\star)-f(x,y)\big]}_{\le P(x)-f(x,y)\le\delta}
+\underbrace{\big[\psi_{x,x^\star}(y)-\psi_{x,x^\star}(y^\star)\big]}_{\le\ell\nrm{y-y^\star}\nrm{x-x^\star}\text{ by }\eqref{eq:crosslip}} .
\]
Now $\nrm{y-y^\star}^2\le2D/\nu$ and $\nrm{x-x^\star}^2\le2E/\mu$ (strong convexity of $P$). Hence
\[
\ell\nrm{y-y^\star}\nrm{x-x^\star}\ \le\ \ell\sqrt{\frac{2D}\nu\cdot\frac{2E}\mu}=2\sqrt{\chi DE}\ \le\ \frac D2+2\chi E ,
\]
where the last step is $2\sqrt{\alpha\beta}\le\alpha+\beta$ with $\alpha=D/2$ and $\beta=2\chi E$. Thus $D\le\delta+\frac D2+2\chi E$, that is,
\[
D\ \le\ 2\delta+4\chi E .
\]

\emph{Step 2: from $D$ to $P^\star-\phi(y)$.} Let $x_y=\argmin_\cX f(\cdot,y)$, so that $\phi(y)=f(x_y,y)$. Then
\[
P^\star-\phi(y)=f(x^\star,y^\star)-f(x_y,y)=D+\big[f(x^\star,y)-f(x_y,y)\big].
\]
By $\mu$-strong convexity of $f(\cdot,y)$,
\[
f(x^\star,y)-f(x_y,y)\ \le\ \ip{\nabla_xf(x^\star,y)}{x^\star-x_y}-\frac\mu2\nrm{x^\star-x_y}^2 .
\]
Since $x^\star$ minimizes $f(\cdot,y^\star)$ over $\cX$, first-order optimality gives $\ip{\nabla_xf(x^\star,y^\star)}{x^\star-x_y}\le0$, and we may subtract this term:
\[
f(x^\star,y)-f(x_y,y)\ \le\ \ip{\nabla_xf(x^\star,y)-\nabla_xf(x^\star,y^\star)}{x^\star-x_y}-\frac\mu2\nrm{x^\star-x_y}^2
\ \le\ \ell\nrm{y-y^\star}s-\frac\mu2s^2 ,
\]
with $s=\nrm{x^\star-x_y}$. Maximizing over $s$ and using Step~1,
\[
f(x^\star,y)-f(x_y,y)\ \le\ \frac{\ell^2\nrm{y-y^\star}^2}{2\mu}\ \le\ \frac{\ell^2}{2\mu}\cdot\frac{2D}\nu=\chi D .
\]
Therefore $P^\star-\phi(y)\le(1+\chi)D=\bar\chi D\le \bar\chi (2\delta+4\chi E)$.
\end{proof}

\subsection{The inner two-point inequality}

Each step of the outer method minimizes a proximal subproblem only approximately, by one run of \cref{alg:block}. The next lemma turns the guarantee of \cref{lem:block} into the form needed by the outer analysis. It returns two points: $p$, at which the objective is evaluated, and $z$, around which the quadratic lower bound is centered.

\begin{lemma}[two-point inequality]\label{lem:twopoint}
Let $g$ be convex on $\cX$, let $z_-\in\cX$ and $\theta_0>0$, and suppose that
\[
\Phi(z)=\frac1{\theta_0}g(z)+\frac12\nrm{z-z_-}^2
\]
is $\sigma_\Phi$-strongly convex and $L_\Phi$-smooth on $\cX$. Run the method of \cref{lem:block} on $F=\Phi$ with prior $\pi=\frac12\nrm{\cdot-z_-}^2$ (that is, $c=0$, $\gamma=1$, $\bar x=z_-$) for $k=\lceil2\sqrt{L_\Phi}\,\rceil$ steps, and let $p=u_k$ and $z=z_k$ be its output. Then
\[
A_\Phi\defeq A_k=\frac{k(k+3)}{4L_\Phi}\ \ge\ 1,\qquad \theta\defeq\frac{A_\Phi}{1+A_\Phi}\in\Big[\frac12,1\Big),
\]
and for all $w\in\cX$
\begin{equation}\label{eq:twopoint}
\frac{\theta}{\theta_0}g(p)+\frac\theta2\nrm{p-z_-}^2+\frac{1+\theta(\sigma_\Phi-1)}{2}\nrm{w-z}^2\ \le\ \frac\theta{\theta_0}g(w)+\frac12\nrm{w-z_-}^2 .
\end{equation}
\end{lemma}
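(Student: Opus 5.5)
The proof is a direct specialization of \cref{lem:block}, followed by dividing through by $1+A_\Phi$, in the same way that \cref{cor:transport} was derived. The key observation is that the prior $\pi=\frac12\nrm{\cdot-z_-}^2$ coincides with the proximal term already present in $\Phi$. As a result, the left-hand side of \eqref{eq:blockineq} collapses to a single proximal term with total weight one after normalization.

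\emph{Step 1 (constants).} Apply \cref{lem:block} to $F=\Phi$ with $\mu_F=\sigma_\Phi$, $L_F=L_\Phi$, $c=0$, $\gamma=1$ and $\bar x=z_-\in\cX$. By \cref{lem:block}(ii), $A_\Phi=A_k=k(k+3)/(4L_\Phi)$. Since $k\ge2\sqrt{L_\Phi}$, we get $A_\Phi\ge k^2/(4L_\Phi)\ge1$. Hence $\theta=A_\Phi/(1+A_\Phi)\ge\frac12$, and $\theta<1$ trivially.

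\emph{Step 2 (invoke the block inequality).} For every $w\in\cX$, \eqref{eq:blockineq} with $u_k=p$ and $z_k=z$ reads
\[
\tfrac12\nrm{w-z_-}^2+A_\Phi\Phi(w)\ \ge\ A_\Phi\Phi(p)+\frac{1+\sigma_\Phi A_\Phi}{2}\nrm{w-z}^2 .
\]
Now substitute $\Phi(w)=\frac1{\theta_0}g(w)+\frac12\nrm{w-z_-}^2$ on the left, and the analogous expression for $\Phi(p)$ on the right. The left-hand side becomes $\frac{A_\Phi}{\theta_0}g(w)+\frac{1+A_\Phi}{2}\nrm{w-z_-}^2$. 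The right-hand side becomes
\[
\frac{A_\Phi}{\theta_0}g(p)+\frac{A_\Phi}2\nrm{p-z_-}^2+\frac{1+\sigma_\Phi A_\Phi}2\nrm{w-z}^2 .
\]

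\emph{Step 3 (normalize).} Divide both sides by $1+A_\Phi$ and use $A_\Phi/(1+A_\Phi)=\theta$. The proximal term on the left acquires coefficient exactly $\frac12$. On the right, the coefficient of $\nrm{w-z}^2$ is
\[
\frac{1+\sigma_\Phi A_\Phi}{1+A_\Phi}=(1-\theta)+\theta\sigma_\Phi=1+\theta(\sigma_\Phi-1).
\]
This gives \eqref{eq:twopoint}.

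There is no real obstacle here. The only points requiring care are checking that the hypotheses of \cref{lem:block} hold, and keeping the bookkeeping of the two proximal terms straight. The hypotheses are: $\bar x=z_-\in\cX$, $\gamma=1>0$, and $0\le\sigma_\Phi\le L_\Phi$, where the last follows from $\Phi$ being both $\sigma_\Phi$-strongly convex and $L_\Phi$-smooth. For the bookkeeping, the prior's term and the $A_\Phi$-weighted copy inside $\Phi(w)$ merge on the left, while the copy inside $\Phi(p)$ survives on the right as $\frac\theta2\nrm{p-z_-}^2$. Convexity of $g$ is not needed beyond what is encoded in the assumed strong convexity of $\Phi$.
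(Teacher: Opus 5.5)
Your proposal is correct and follows the paper's proof essentially verbatim: you specialize \eqref{eq:blockineq} with $\mu_F=\sigma_\Phi$, $L_F=L_\Phi$, $c=0$, $\gamma=1$, $\bar x=z_-$, substitute the definition of $\Phi$ so that the two proximal terms merge into $\frac{1+A_\Phi}{2}\nrm{w-z_-}^2$, and divide by $1+A_\Phi$. Your bound $A_\Phi\ge k^2/(4L_\Phi)\ge1$ and the identity $\frac{1+\sigma_\Phi A_\Phi}{1+A_\Phi}=1+\theta(\sigma_\Phi-1)$ are exactly the ones the paper uses.
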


\begin{proof}
The bound $A_\Phi\ge k^2/(4L_\Phi)\ge1$ follows from $k\ge2\sqrt{L_\Phi}$, and then $\theta=A_\Phi/(1+A_\Phi)\ge\frac12$. By \eqref{eq:blockineq} with $\mu_F=\sigma_\Phi$, for all $w\in\cX$,
\[
\frac12\nrm{w-z_-}^2+A_\Phi\,\Phi(w)\ \ge\ A_\Phi\,\Phi(p)+\frac{1+\sigma_\Phi A_\Phi}2\nrm{w-z}^2 .
\]
Substitute the definition of $\Phi$ on both sides. On the left, the two quadratic terms combine into $\frac{1+A_\Phi}2\nrm{w-z_-}^2$:
\[
\frac{1+A_\Phi}2\nrm{w-z_-}^2+\frac{A_\Phi}{\theta_0}g(w)\ \ge\ \frac{A_\Phi}{\theta_0}g(p)+\frac{A_\Phi}2\nrm{p-z_-}^2+\frac{1+\sigma_\Phi A_\Phi}2\nrm{w-z}^2 .
\]
Divide by $1+A_\Phi$ and use $\frac{A_\Phi}{1+A_\Phi}=\theta$ and
\[
\frac{1+\sigma_\Phi A_\Phi}{1+A_\Phi}=\frac1{1+A_\Phi}+\sigma_\Phi\frac{A_\Phi}{1+A_\Phi}=(1-\theta)+\sigma_\Phi\theta=1+\theta(\sigma_\Phi-1).
\]
This is \eqref{eq:twopoint}.
\end{proof}

We apply \cref{lem:twopoint} with $\theta_0=\theta$. This is not circular: $\theta$ is determined by $L_\Phi$ alone, and below $L_\Phi$ is a fixed constant that does not depend on $\theta_0$ (see \eqref{eq:phiconst}). So we first compute $L_\Phi$, then $k$, $A_\Phi$ and $\theta$, and only then define the subproblems with $\theta_0=\theta$. With $\theta_0=\theta$, \eqref{eq:twopoint} reads
\begin{equation}\label{eq:twopoint2}
g(p)+\frac\theta2\nrm{p-z_-}^2+\frac{1+\theta(\sigma_\Phi-1)}{2}\nrm{w-z}^2\ \le\ g(w)+\frac12\nrm{w-z_-}^2\qquad\forall w\in\cX .
\end{equation}

\subsection{One outer block}

An outer block takes a point $w\in\cX$ with $P(w)-P^\star\le E$ and returns a point $u_K$ with $P(u_K)-P^\star\le\frac{9}{32}E$, that is, it more than halves the primal error.

\paragraph{Parameters.} Set
\begin{equation}\label{eq:phiconst}
\begin{gathered}
L_\Phi=1+\frac{L}{4\Gamma},\qquad\sigma_\Phi=1+\frac\mu{4\Gamma},\qquad k=\lceil2\sqrt{L_\Phi}\,\rceil,\qquad A_\Phi=\frac{k(k+3)}{4L_\Phi},\qquad\theta=\frac{A_\Phi}{1+A_\Phi},\\
K=\Big\lceil8\sqrt{\bar\chi/\theta}\,\Big\rceil,\qquad c_\delta=\frac{\bar\chi}{4K^3},\qquad\delta=c_\delta E .
\end{gathered}
\end{equation}

\paragraph{Weights.} Let $A_0=0$ and, for $i=1,\ldots,K$, let $a_i>0$ be the positive root of
\[
a_i^2=\frac{\theta}{4\Gamma}\,(A_{i-1}+a_i),\qquad A_i=A_{i-1}+a_i ,
\]
so that $a_i^2/A_i=\theta/(4\Gamma)$.

\paragraph{Steps.} Start from $u_0=z_0=w$. For $z\in\cX$ put $w_i(z)=\frac{A_{i-1}u_{i-1}+a_iz}{A_i}\in\cX$, and let $v_i=w_i(z_{i-1})$. For $i=1,\ldots,K$:
\begin{enumerate}[label=\arabic*.,leftmargin=1.6em,itemsep=1pt]
\item Using $\Oy$, find $y_i\in\cY$ with $P(v_i)-f(v_i,y_i)\le\delta$, for example by \cref{lem:cog} applied to the concave function $f(v_i,\cdot)$.
\item Apply \cref{lem:twopoint} to $g_i(z)=A_if(w_i(z),y_i)$ with $\theta_0=\theta$ and $z_-=z_{i-1}$, that is, to
\[
\Phi_i(z)=\frac{A_i}\theta f(w_i(z),y_i)+\frac12\nrm{z-z_{i-1}}^2 .
\]
Let $p_i$ and $z_i$ be its output, and put $u_i=w_i(p_i)$.
\end{enumerate}
The function $g_i$ is convex as a composition of the convex function $f(\cdot,y_i)$ with an affine map. The gradient $\nabla\Phi_i(z)=\frac{a_i}{\theta}\nabla_xf(w_i(z),y_i)+z-z_{i-1}$ costs one call to $\Ox$.

\paragraph{The constants of $\Phi_i$.} The map $z\mapsto w_i(z)$ is affine with linear part $\frac{a_i}{A_i}I$. Hence $z\mapsto\frac{A_i}\theta f(w_i(z),y_i)$ is strongly convex and smooth with constants
\[
\frac{A_i}\theta\cdot\frac{a_i^2}{A_i^2}\cdot\mu=\frac{a_i^2}{\theta A_i}\,\mu=\frac{\mu}{4\Gamma}
\qquad\text{and}\qquad
\frac{a_i^2}{\theta A_i}\,L=\frac{L}{4\Gamma},
\]
by the choice of $a_i$. Adding the $1$-strongly convex and $1$-smooth term $\frac12\nrm{z-z_{i-1}}^2$ shows that $\Phi_i$ is $\sigma_\Phi$-strongly convex and $L_\Phi$-smooth, with the constants in \eqref{eq:phiconst}. These constants do not depend on $i$ or $\theta$.

\begin{lemma}[growth of the weights]\label{lem:weights}
For $i\ge1$,
\[
\frac{\sqrt\theta\,(i+1)}{4\sqrt\Gamma}\ \le\ \sqrt{A_i}\ \le\ \frac{\sqrt\theta\, i}{2\sqrt\Gamma}.
\]
Consequently, $\sum_{i=1}^KA_i\le\frac{\theta K^3}{4\Gamma}$ and $A_K\ge\frac{\theta(K+1)^2}{16\Gamma}\ge\frac4\mu$.
\end{lemma}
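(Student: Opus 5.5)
The plan is to work with $s_i\defeq\sqrt{A_i}$ and use the defining relation $a_i^2=\frac{\theta}{4\Gamma}A_i$ with $a_i=A_i-A_{i-1}=(s_i-s_{i-1})(s_i+s_{i-1})$. Put $c\defeq\sqrt{\theta/(4\Gamma)}=\frac{\sqrt\theta}{2\sqrt\Gamma}$. Then $a_i=c\,s_i$, and dividing by $s_i>0$ gives
\[
(s_i-s_{i-1})\Big(1+\frac{s_{i-1}}{s_i}\Big)=c .
\]
Since $A_i\ge A_{i-1}$, we have $0\le s_{i-1}/s_i\le1$, so the increment satisfies $\frac c2\le s_i-s_{i-1}\le c$. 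For $i=1$ we get exactly $s_1=c$, since $A_0=0$ gives $a_1=A_1=c^2$.

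Summing the increments gives $s_i\le ci$, which is the upper bound. It also gives $s_i\ge c+\frac c2(i-1)=\frac c2(i+1)$, which is the lower bound $\frac{\sqrt\theta(i+1)}{4\sqrt\Gamma}$. This telescoping is the only real step. The one thing to check is the lower bound: it must use the first increment (equal to $c$) together with the increments $\ge c/2$ for $i\ge2$, and not start from $s_0=0$ with increments $c/2$ throughout, since that would only give $ci/2$.

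For the consequences, square the upper bound to get $A_i\le\frac{\theta i^2}{4\Gamma}$, and sum using $\sum_{i\le K}i^2\le K^3$ (valid because $K(K+1)(2K+1)/6\le K^3$ for $K\ge1$). This gives $\sum_iA_i\le\frac{\theta K^3}{4\Gamma}$. Squaring the lower bound gives $A_K\ge\frac{\theta(K+1)^2}{16\Gamma}$.

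The final inequality $A_K\ge4/\mu$ uses the parameter choice in \eqref{eq:phiconst}. We have $K\ge8\sqrt{\bar\chi/\theta}$, so $(K+1)^2>64\bar\chi/\theta$, and therefore
\[
\frac{\theta(K+1)^2}{16\Gamma}>\frac{4\bar\chi}{\Gamma}=\frac4\mu ,
\]
because $\Gamma=\mu\bar\chi$. The whole argument is elementary, and I expect no real obstacle beyond tracking the constants correctly.
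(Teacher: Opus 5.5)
Your proof is correct and follows essentially the same route as the paper. The paper also sets $\rho=\sqrt{\theta/(4\Gamma)}$ (your $c$) and writes $a_i=\rho\sqrt{A_i}$; it then gets $\sqrt{A_1}=\rho$ and $\sqrt{A_i}-\sqrt{A_{i-1}}\in[\rho/2,\rho]$ for $i\ge2$, telescopes, and derives the consequences exactly as you do, via $\sum_{i\le K}i^2\le K^3$, $K\ge8\sqrt{\bar\chi/\theta}$ and $\Gamma=\mu\bar\chi$.
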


\begin{proof}
Write $\rho=\sqrt{\theta/(4\Gamma)}=\frac{\sqrt\theta}{2\sqrt\Gamma}$, so $a_i=\rho\sqrt{A_i}$. For $i=1$, $A_1=a_1=\rho\sqrt{A_1}$ gives $\sqrt{A_1}=\rho$, which matches both bounds. For $i\ge2$,
\[
\sqrt{A_i}-\sqrt{A_{i-1}}=\frac{A_i-A_{i-1}}{\sqrt{A_i}+\sqrt{A_{i-1}}}=\frac{\rho\sqrt{A_i}}{\sqrt{A_i}+\sqrt{A_{i-1}}}\in\Big[\frac\rho2,\ \rho\Big],
\]
because $0\le A_{i-1}\le A_i$. Summing from $2$ to $i$ gives $\rho\big(1+\frac{i-1}2\big)\le\sqrt{A_i}\le\rho i$, which is the claim since $\rho(1+\frac{i-1}2)=\frac{\rho(i+1)}2$.

For the sum, $A_i\le\rho^2i^2$ and $\sum_{i=1}^Ki^2=\frac{K(K+1)(2K+1)}6\le K^3$; the last inequality is equivalent to $(4K+1)(K-1)\ge0$. Hence $\sum_iA_i\le\rho^2K^3=\frac{\theta K^3}{4\Gamma}$. For the last claim, $A_K\ge\frac{\rho^2(K+1)^2}4=\frac{\theta(K+1)^2}{16\Gamma}$, and $K\ge8\sqrt{\bar\chi/\theta}$ gives $(K+1)^2\ge K^2\ge\frac{64\bar\chi}\theta=\frac{64\Gamma}{\theta\mu}$, so $A_K\ge\frac4\mu$.
\end{proof}

\begin{lemma}[potential]\label{lem:potential}
Let $V_i=A_i\big(P(u_i)-P^\star\big)+\frac12\nrm{z_i-x^\star}^2$. Then for $i=1,\ldots,K$
\begin{equation}\label{eq:potential}
V_i+\frac\theta4\nrm{p_i-z_{i-1}}^2+\frac{\theta\mu}{8\Gamma}\nrm{z_i-x^\star}^2\ \le\ V_{i-1}+2A_i\delta ,
\end{equation}
and consequently $P(u_K)-P^\star\le\frac{9}{32}E$.
\end{lemma}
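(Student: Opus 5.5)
The plan is to derive \eqref{eq:potential} from the two-point inequality \eqref{eq:twopoint2} for the $i$-th subproblem, tested at $w=x^\star$, and then to telescope. For the upper side I will use convexity of $P$; for the lower side, \cref{lem:model} at $v=v_i$.

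\emph{Step 1: the inequality for one step.} Apply \eqref{eq:twopoint2} with $g=g_i$, $z_-=z_{i-1}$, $p=p_i$, $z=z_i$ and $w=x^\star$. With $\sigma_\Phi-1=\mu/(4\Gamma)$, the coefficient of $\nrm{x^\star-z_i}^2$ is $\frac12+\frac{\theta\mu}{8\Gamma}$. This gives
\[
A_if(u_i,y_i)+\frac\theta2\nrm{p_i-z_{i-1}}^2+\Big(\frac12+\frac{\theta\mu}{8\Gamma}\Big)\nrm{x^\star-z_i}^2\ \le\ A_if(w_i(x^\star),y_i)+\frac12\nrm{x^\star-z_{i-1}}^2 ,
\]
where I used $w_i(p_i)=u_i$.

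\emph{Upper side.} Since $w_i(x^\star)\in\cX$, we have $f(w_i(x^\star),y_i)\le P(w_i(x^\star))$. Convexity of $P$ then gives $A_iP(w_i(x^\star))\le A_{i-1}P(u_{i-1})+a_iP^\star$. Subtracting $A_iP^\star$ leaves $A_{i-1}(P(u_{i-1})-P^\star)$ on the right.

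\emph{Lower side.} This is the step I expect to need the most care, since it is where inexactness and coupling enter. \Cref{lem:model} at $v=v_i$, with $\tilde y=y_i$ a $\delta$-maximizer, gives
\[
f(u_i,y_i)\ \ge\ P(u_i)-\gamma_\chi\nrm{u_i-v_i}^2-2\delta .
\]
Subtracting the definitions of $u_i$ and $v_i$ gives $u_i-v_i=\frac{a_i}{A_i}(p_i-z_{i-1})$. Hence
\[
A_i\gamma_\chi\nrm{u_i-v_i}^2=\gamma_\chi\frac{a_i^2}{A_i}\nrm{p_i-z_{i-1}}^2=\frac{\gamma_\chi\theta}{4\Gamma}\nrm{p_i-z_{i-1}}^2\ \le\ \frac\theta4\nrm{p_i-z_{i-1}}^2 ,
\]
because $\gamma_\chi\le\Gamma$. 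This absorbs half of the $\frac\theta2\nrm{p_i-z_{i-1}}^2$ term and leaves the $\frac\theta4$ term of \eqref{eq:potential}. Collecting terms gives \eqref{eq:potential} with error $2A_i\delta$.

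\emph{Step 2: telescoping.} Drop the nonnegative extra terms in \eqref{eq:potential} and sum over $i=1,\dots,K$:
\[
V_K\ \le\ V_0+2\delta\sum_iA_i .
\]
At the start, $A_0=0$ and $z_0=w$, so $V_0=\frac12\nrm{w-x^\star}^2\le E/\mu$ by strong convexity of $P$. By \cref{lem:weights} and $\delta=c_\delta E$ with $c_\delta=\bar\chi/(4K^3)$,
\[
2\delta\sum_iA_i\ \le\ 2\cdot\frac{\bar\chi E}{4K^3}\cdot\frac{\theta K^3}{4\Gamma}=\frac{\theta E}{8\mu}\ \le\ \frac{E}{8\mu},
\]
using $\Gamma=\mu\bar\chi$ and $\theta<1$. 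Therefore $A_K(P(u_K)-P^\star)\le\frac{9E}{8\mu}$. Since $A_K\ge4/\mu$ by \cref{lem:weights}, we get $P(u_K)-P^\star\le\frac{9}{32}E$.

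The only points to double-check are that every point used ($w_i(x^\star)$, $u_i$, $v_i$) lies in $\cX$, which holds because they are convex combinations of points of $\cX$, and that the constants of $\Phi_i$ match those required by \cref{lem:twopoint}, which was verified in the paragraph preceding \cref{lem:weights}.
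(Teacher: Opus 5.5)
Your proposal is correct and follows the paper's proof essentially step for step. You use the two-point inequality \eqref{eq:twopoint2} at $w=x^\star$, and \cref{lem:model} at $v=v_i$, where $u_i-v_i=\frac{a_i}{A_i}(p_i-z_{i-1})$ absorbs half of the $\frac\theta2$ term; you then telescope with \cref{lem:weights}, exactly as the paper does. The only cosmetic difference is on the upper side: you bound $f(w_i(x^\star),y_i)\le P(w_i(x^\star))$ and then use convexity of $P$, whereas the paper first uses convexity of $f(\cdot,y_i)$ and then $f(\cdot,y_i)\le P$ at $u_{i-1}$ and $x^\star$. Both routes give $A_{i-1}P(u_{i-1})+a_iP^\star$.
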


\begin{proof}
Fix $i$. We combine three inequalities.

\emph{(1) The inner run.} Apply \eqref{eq:twopoint2} with $g=g_i$, $z_-=z_{i-1}$, $p=p_i$, $z=z_i$ and $w=x^\star$. Since $g_i(p_i)=A_if(u_i,y_i)$ and $\sigma_\Phi-1=\frac\mu{4\Gamma}$,
\[
A_if(u_i,y_i)+\frac\theta2\nrm{p_i-z_{i-1}}^2+\frac12\Big(1+\frac{\theta\mu}{4\Gamma}\Big)\nrm{x^\star-z_i}^2\ \le\ A_if(w_i(x^\star),y_i)+\frac12\nrm{x^\star-z_{i-1}}^2 .
\]

\emph{(2) From the model to $P$ at $u_i$.} Apply \cref{lem:model} with $v=v_i$, $\tilde y=y_i$ and $x=u_i$. Since $u_i-v_i=w_i(p_i)-w_i(z_{i-1})=\frac{a_i}{A_i}(p_i-z_{i-1})$,
\[
A_iP(u_i)\ \le\ A_if(u_i,y_i)+\frac{\gamma_\chi a_i^2}{A_i}\nrm{p_i-z_{i-1}}^2+2A_i\delta\ \le\ A_if(u_i,y_i)+\frac\theta4\nrm{p_i-z_{i-1}}^2+2A_i\delta ,
\]
where we used $\frac{a_i^2}{A_i}=\frac{\theta}{4\Gamma}$ and $\gamma_\chi\le\Gamma$.

\emph{(3) The right-hand side at $x^\star$.} By convexity of $f(\cdot,y_i)$ and $f(\cdot,y_i)\le P$,
\[
A_if(w_i(x^\star),y_i)\ \le\ A_{i-1}f(u_{i-1},y_i)+a_if(x^\star,y_i)\ \le\ A_{i-1}P(u_{i-1})+a_iP^\star .
\]

\emph{Combining.} Use (2) to replace $A_if(u_i,y_i)$ in (1) by the smaller quantity $A_iP(u_i)-\frac\theta4\nrm{p_i-z_{i-1}}^2-2A_i\delta$, and use (3) on the right-hand side of (1):
\[
A_iP(u_i)+\frac\theta4\nrm{p_i-z_{i-1}}^2+\frac12\Big(1+\frac{\theta\mu}{4\Gamma}\Big)\nrm{z_i-x^\star}^2\ \le\ A_{i-1}P(u_{i-1})+a_iP^\star+\frac12\nrm{z_{i-1}-x^\star}^2+2A_i\delta .
\]
Subtracting $A_iP^\star=A_{i-1}P^\star+a_iP^\star$ from both sides gives \eqref{eq:potential}.

\emph{Consequence.} Drop the nonnegative extra terms on the left of \eqref{eq:potential} and sum over $i=1,\ldots,K$. Using \cref{lem:weights} and $\delta=c_\delta E=\frac{\bar\chi E}{4K^3}$,
\[
V_K\ \le\ V_0+2\delta\sum_{i=1}^KA_i\ \le\ V_0+2\cdot\frac{\bar\chi E}{4K^3}\cdot\frac{\theta K^3}{4\Gamma}=V_0+\frac{\theta E}{8\mu},
\]
where $\Gamma=\mu \bar\chi$. Since $A_0=0$ and $z_0=w$, strong convexity of $P$ gives $V_0=\frac12\nrm{w-x^\star}^2\le\frac E\mu$; with $\theta<1$ we get $V_K\le\frac{9E}{8\mu}$. Finally, $A_K(P(u_K)-P^\star)\le V_K$ and $A_K\ge\frac4\mu$ by \cref{lem:weights}, so
\[
P(u_K)-P^\star\ \le\ \frac{V_K}{A_K}\ \le\ \frac{9E}{8\mu}\cdot\frac\mu4=\frac{9}{32}E .
\]
\end{proof}

\subsection{Proof of \texorpdfstring{\cref{thm:cross}}{the theorem}}

Put $\Lambda=1+\log\frac{B\bar\chi}\eps$. Since $\eps<B$ and $\bar\chi\ge1$, both $\log\frac B\eps$ and $\log \bar\chi$ are at most $\log\frac{B\bar\chi}\eps$.

\emph{Initialization.} Fix $y_0\in\cY$ and apply \cref{lem:init} to $f(\cdot,y_0)$ with $E_{\rm target}=B$. The output $w_0$ satisfies $f(w_0,y_0)-\phi(y_0)\le B/2$. Moreover, for a supergradient $g_0$ of $f(w_0,\cdot)$ at $y_0$ with $\nrm{g_0}\le\My$, concavity gives $f(w_0,y)\le f(w_0,y_0)+\ip{g_0}{y-y_0}\le f(w_0,y_0)+B$ for all $y\in\cY$, so $P(w_0)-f(w_0,y_0)\le B$. Since also $\phi(y_0)\le\max_\cY\phi=P^\star$,
\[
P(w_0)-P^\star=\big[P(w_0)-f(w_0,y_0)\big]+\big[f(w_0,y_0)-\phi(y_0)\big]+\big[\phi(y_0)-P^\star\big]\ \le\ B+\frac B2+0\ \le\ 2B .
\]
This costs $O(I_0)$ gradients and no calls to $\Oy$.

\emph{Outer loop.} Let $E_0=2B$ and $E_s=2^{-s}E_0$. Block $s=1,2,\ldots$ starts from $w_{s-1}$ with $P(w_{s-1})-P^\star\le E_{s-1}$, uses $E=E_{s-1}$ and $\delta=c_\delta E_{s-1}$, and returns $w_s=u_K$. By \cref{lem:potential}, $P(w_s)-P^\star\le\frac9{32}E_{s-1}\le E_s$, so the invariant is maintained. \Cref{alg:cross} stops after the first block $S$ with $E_S\le\eps/(6\bar\chi^2)$, that is,
\[
S=\Big\lceil\log_2\frac{12B\bar\chi^2}\eps\Big\rceil=O(\Lambda),
\]
and by minimality of $S$ we also have $E_S>\eps/(12\bar\chi^2)$. Finally, it computes $\hat y$ with $P(\hat x)-f(\hat x,\hat y)\le c_\delta E_S$ at $\hat x=w_S$.

\emph{Accuracy.} By \cref{lem:gapcross} with $E=E_S$ and $\delta=c_\delta E_S$,
\[
\Gap(\hat x,\hat y)\ \le\ E_S\big(1+2\bar\chi c_\delta+4\bar\chi\chi\big).
\]
Each of the three terms in parentheses is at most a multiple of $\bar\chi^2$. First, $1\le \bar\chi^2$. Second, $K\ge8\sqrt{\bar\chi/\theta}\ge8\sqrt{\bar\chi}$ gives $c_\delta=\frac{\bar\chi}{4K^3}\le\frac{1}{2048\sqrt{\bar\chi}}$, so $2\bar\chi c_\delta\le\frac{\sqrt{\bar\chi}}{1024}\le \bar\chi^2$. Third, $\chi\le \bar\chi$ gives $4\bar\chi\chi\le4\bar\chi^2$. Hence $\Gap(\hat x,\hat y)\le6\bar\chi^2E_S\le\eps$.

\emph{Gradient count.} Each outer block consists of $K$ inner runs of $k$ steps, one gradient each, so it uses $Kk$ calls to $\Ox$. Since $\theta\ge\frac12$, $K\le8\sqrt{2\bar\chi}+1=O(\sqrt{\bar\chi})$, and $k=\lceil2\sqrt{1+\kappa/(4\bar\chi)}\,\rceil=O(\sqrt{1+\kappa/\bar\chi})$. Hence
\[
Kk=O\Big(\sqrt{\bar\chi}\sqrt{1+\kappa/\bar\chi}\Big)=O\big(\sqrt{\bar\chi+\kappa}\big)=O\big(\sqrt{\kappa+\chi}\big),
\]
where we used $\kappa\ge1$. With $S=O(\Lambda)$ blocks and the initialization, $\Tx=I_0+O(\sqrt{\kappa+\chi}\,\Lambda)$.

\emph{Query count in $y$.} The algorithm solves $SK+1=O(\sqrt{\bar\chi}\,\Lambda)$ problems in $y$, each to accuracy at least
\[
\delta_{\min}=c_\delta E_S\ >\ \frac{c_\delta\,\eps}{12\bar\chi^2}.
\]
Since $K\le8\sqrt{2\bar\chi}+1\le13\sqrt{\bar\chi}$, we have $\frac1{c_\delta}=\frac{4K^3}{\bar\chi}\le4\cdot13^3\sqrt{\bar\chi}=8788\sqrt{\bar\chi}$, so $\frac{B}{\delta_{\min}}\le\frac{12\cdot8788\,B\bar\chi^{5/2}}{\eps}$ and $\log\frac{B}{\delta_{\min}}=O(\Lambda)$. By \cref{lem:cog}, each problem in $y$ is solved by the center-of-gravity method with $O(m\Lambda)$ calls to $\Oy$. Therefore
\[
\Ty=O\big(\sqrt{\bar\chi}\,\Lambda\big)\cdot O(m\Lambda)=O\big(m\sqrt{1+\chi}\,\Lambda^2\big).
\]
If instead a solver returns a $\delta$-maximizer after $N_y(\delta)$ calls, with $N_y$ nonincreasing, the same count gives $\Ty=O\big(\sqrt{1+\chi}\,\Lambda\cdot N_y(\eps/(Cr^{5/2}))\big)$ with $C=12\cdot8788$. \hfill$\square$

\section{Experimental details and additional results}\label{app:experiments}

Every number in \cref{sec:experiments} is computed from the raw logs of the runs described below.

\subsection{Problems}\label{app:exp-problems}

All problems have the form
\begin{equation}\label{eq:exp-form}
f(x,y)=\sum_{j=0}^{J} w_j(y)\,L_j(x)+\frac\mu2\nrm{x}^2+\ip{c}{y}-\frac\nu2\nrm{y}^2,
\end{equation}
with convex losses $L_j$, weights $w_j(y)$ that are affine in $y$ and nonnegative on $\cY$, and $\cX=\RR^n$. One call to $\Ox$ returns $\sum_jw_j(y)\nabla L_j(x)+\mu x$ (one weighted backward pass over the data); one call to $\Oy$ returns $f(x,y)$ and $\nabla_y f(x,y)$, i.e., the vector $(L_j(x))_j$ (one forward pass). In our NumPy implementation both calls cost one pass over the data; the measured wall-clock ratio of a call to $\Oy$ to a call to $\Ox$ is between $0.6$ and $1.9$ across the four real problems (in deep models a forward pass typically costs about a third of a gradient). We therefore report $\Tx$ and $\Ty$ separately, and weighted costs $\Tx+r\Ty$ over a range of $r$. We set $\mu=L_{\max}/(\kappa-1)$, where $L_{\max}$ is a certified upper bound on the smoothness constant of $f(\cdot,y)$ over $y\in\cY$ (power iteration on the per-group data matrices), so $\kappa$ is the condition number seen by the methods. The strength of the ridge term is thus the knob that sets $\kappa$; $\kappa=10^4$ corresponds to $\mu\approx10^{-5}$ on unit-norm features, a typical amount of regularization.

\paragraph{Group DRO (Adult; 20~Newsgroups).}
$L_j$ is the mean logistic loss of group $j$, $w(y)=y\in\Delta_G$, and $m=G-1$ (the simplex is parametrized in its affine hull by an orthonormal basis of $\mathbf 1^\perp$, so that $\nrm{\cdot}$ is preserved).
\emph{Adult} \citep{kohavi1996scaling}: $45{,}222$ examples after removing missing values; $n=105$ (standardized numerical features, one-hot categorical features, intercept); $G=4$ groups given by sex $\times$ \{White, non-White\}.
\emph{20~Newsgroups}: binary task \texttt{sci.*} vs.\ \texttt{talk.*} ($7{,}205$ documents); TF-IDF with $2\cdot10^4$ terms plus intercept, rows normalized; the $G=8$ groups are the eight newsgroups, so $m=7$ and $n=20{,}001>N$.

\paragraph{Learning with a few constraints (Adult).}
$L_0$ is the mean logistic loss over all examples and $L_j$, $j=1,\dots,4$, is the mean logistic loss over the \emph{positive} examples of group $j$ (a convex surrogate of the group's false-negative rate). The constraints $L_j(x)\le\tau$ are imposed with $\tau$ equal to $0.9$ times the average of $L_j$ at the unconstrained ridge solution, so that several constraints are active. The Lagrangian is \eqref{eq:exp-form} with $w(y)=(1,y)$, $c=-\tau\mathbf1$, $\cY=[0,\Lambda]^4$, $\Lambda=4$.

\paragraph{Neyman--Pearson classification (20~Newsgroups, $m=1$).}
All $18{,}846$ documents; positives are the four \texttt{sci.*} groups. We minimize the logistic loss on negatives subject to the logistic loss on positives being at most $0.7$ times its value at the unconstrained solution; $\cY=[0,4]$, $n=20{,}001$.

\paragraph{Synthetic group DRO (scaling experiments).}
$L_j(x)=\frac12\nrm{A_jx-b_j}^2$, $j=0,\dots,m$, where $A_j=G_jS/\nrm{G_jS}$ with a Gaussian $G_j\in\RR^{200\times400}$, a shared scaling $S=\operatorname{diag}(i^{-1})$ (so that every $f(\cdot,y)$ is genuinely ill-conditioned) and $b_j=A_jc_j+0.1\xi_j$ with random centers $c_j$; $L_{\max}=1+\mu$. Three independent instances (seeds) per configuration.

\paragraph{Group DRO over all 20 newsgroups ($m=19$).}
All $18{,}846$ documents; the label is technology/science/recreation (\texttt{comp.*}, \texttt{sci.*}, \texttt{rec.*}, \texttt{misc.forsale}) versus society/politics/religion (\texttt{talk.*}, \texttt{alt.atheism}, \texttt{soc.religion.christian}); the $G=20$ groups are the newsgroups, so $m=19$ and $n=20{,}001$. The same TF-IDF features as above.

\paragraph{Weak regularization.}
For the experiments of \cref{fig:weak} the ridge parameter is decreased so that $\kappa$ ranges over $10^4$--$10^7$ ($10^8$ on the synthetic instance), i.e., $\mu/L_{\max}$ between $10^{-4}$ and $10^{-7}$. On 20~Newsgroups $n>N$, so the data term is singular and $\mu$ is the actual strong convexity constant.

\paragraph{Group DRO with an $\ell_1$ penalty (nonsmooth dual).}
On Adult with the four groups above, $f(x,y)=\sum_gw_gL_g(x)+\frac\mu2\nrm x^2-\rho\nrm{w-p}_1$, where $p$ is the vector of group proportions and $\rho=0.1$. The dual function is then nonsmooth; at the solution one group weight sits exactly at its kink $w_g=p_g$ and two are at $0$.

\paragraph{$\chi^2$-regularized group DRO (\cref{sec:strong}).}
On 20~Newsgroups (\texttt{sci.*} vs.\ \texttt{talk.*}) with $G\in\{2,4,8\}$ groups (the two topics, four pairs of newsgroups, the eight newsgroups), $f=\sum_gw_gL_g(x)+\frac\mu2\nrm x^2-\frac\nu2\nrm{w-p}^2$ with $\nu=1$ and $\kappa=10^3$. Since $\nrm{\nabla L_g}\le\max_i\nrm{a_i}=1$, the coupling constant is certified as $\ell=\sqrt G$, so $\chi=\ell^2/(\mu\nu)=G/\mu$ grows with the number of groups (see the discussion after \cref{cor:combined}); it equals $1.5\cdot10^4$, $3.1\cdot10^4$ and $6.2\cdot10^4$ for $G=2,4,8$.

\paragraph{Strongly concave problem with bounded coupling (\cref{sec:strong}).}
$f(x,y)=\frac12x\T Hx-\ip{h}{x}+\ip{y}{Cx}-\frac\nu2\nrm y^2$ on $\cY=[-1,1]^m$, $n=400$, $\operatorname{spec}H$ log-uniform in $[\mu,1]$, and $C=\ell Q\T$ with $Q$ having orthonormal columns. Hence $\nrm{\nabla_xf(x,y)-\nabla_xf(x,y')}\le\ell\nrm{y-y'}$ with the same $\ell$ for every $m$; we take $\kappa=10^4$ and $\chi=\ell^2/(\mu\nu)=1$.

\subsection{Methods and implementation}\label{app:exp-methods}

\paragraph{Certificate transport.}
\Cref{alg:ct} exactly as stated, with the default $K_0=\lceil\sqrt{8\kappa/m}\,\rceil$ unless a sweep over $K$ is reported. The lower model is initialized on the slice $y_0=\cg(\cY)$ by accelerated gradient descent stopped by the gradient certificate $f(x,y_0)\ge f(u,y_0)-\frac{\nrm{g}^2}{2\mu}+\frac\mu2\nrm{x-(u-g/\mu)}^2$, which is a valid model \eqref{eq:lowermodel} and replaces the restarts of \cref{lem:init} (whose only role is to handle an unknown $R_x$). $a=\max_\cY U$ and the weights $\lambda$ of the output $\bar x$ (\cref{lem:recovery}) are obtained from one LP whose dual variables are $\lambda$.
We also report a variant, \emph{CT with geometric steps}, which differs only in the step sizes of \cref{alg:block}: instead of $a_i=(i+1)\gamma/(2L)$ it takes the largest $a_i$ admitted by the proof of \cref{lem:block}, i.e., the root of $La_i^2=\Gamma_{i-1}A_i$ (the step rule of accelerated gradient descent for strongly convex functions). The proof of \cref{lem:block} uses the step sizes only through $La_i^2\le\Gamma_{i-1}A_i$, so \cref{lem:block}, \cref{cor:transport} and \cref{thm:general} hold verbatim; the only difference is that $A_K$ becomes larger (exponentially in $K/\sqrt\kappa$ once $K\gtrsim\sqrt\kappa$), which makes $\vartheta$ closer to $1$.

\paragraph{Practical certificate transport.}
In addition to geometric steps, \emph{CT (practical)} uses two further modifications, neither of which changes \cref{thm:general}. (a)~\emph{Prior curvature $\sigma=\mu$.} The initial model has curvature $\mu$, and \cref{cor:transport} maps a model of curvature $\sigma$ to one of curvature $(\sigma+\mu A)/(1+A)$, which equals $\mu$ when $\sigma=\mu$. Hence one can run \cref{alg:block} with $\gamma=\mu$ throughout ($\sigma=\mu/2$ is needed only in \cref{lem:init}), which doubles $A$ and therefore increases $\vartheta$. (b)~\emph{Direct certificate.} After the block, one extra call $g_u=\Ox(u,y_c)$ gives the model $f(x,y_c)\ge v-\frac{\nrm{g_u}^2}{2\mu}+\frac\mu2\nrm{x-(u-g_u/\mu)}^2$ on the queried slice itself. The algorithm keeps whichever of this model and the transported one has the larger constant, and only if it exceeds the current $b$. Thus $b$ never decreases and still grows by at least $\vartheta(v-b)$ after every transport, so the counting in \cref{app:minmax-general} is unchanged; each query costs $K+1$ calls to $\Ox$. When the block has essentially solved the slice, (b) makes CT jump to the slice like a nested scheme; when it has not, the transported bound is used. The default block length $K_0$ is unchanged.

\paragraph{Nested cutting planes.}
The textbook scheme of \cref{eq:nested}: the center-of-gravity method on $\phi$; at each center $y_c$, accelerated gradient descent on $f(\cdot,y_c)$ stopped as soon as $\nrm{\nabla_xf}^2/(2\mu)\le\delta$, which certifies $f(\tilde x,y_c)-\phi(y_c)\le\delta$; the cut is the plane of $f(\tilde x,\cdot)$ and the lower bound is $f(\tilde x,y_c)-\delta$ (\cref{lem:cog}). \emph{Cold start}: every inner run starts from $x=0$ and $\delta=\eps/2$, as in the analysis of \citet{gladin2023solving}. \emph{Warm start}: the inner run starts from the previous inner solution (same $\delta$); this heuristic has no better worst-case guarantee but is what one would implement in practice. \emph{Warm, adaptive}: in addition $\delta=\max\{\eps/2,\,0.1\cdot\text{current certified width}\}$.
All nested variants use the same center-of-gravity routine, LPs and output recovery as CT.

\paragraph{Level-localizer ablation.}
This baseline is the warm-started nested scheme with adaptive inner accuracy, but with the localizer and epochs of \cref{alg:ct}. Queries are centers of gravity of $\{y:U(y)\ge h\}$ with $h=\beta+W/2$ updated in epochs as in \cref{alg:ct}, and each query is an inner accelerated run from the previous inner point stopped at accuracy $\max\{\eps/2,\,c\,(a-\beta)\}$. Because $U\ge\phi$, a cut never removes a point with $\phi\ge h$, which repairs the failure mode of the plain adaptive scheme (\cref{app:exp-more}); the certified width is valid as for all nested schemes. This is an inexact level method in the sense of \citet{lemarechal1995new}. We are not aware of a bound of order $\sqrt{m\kappa}\log\frac1\eps$ for it: the cost of each warm-started inner run grows with the distance between consecutive slice solutions. We ran $c\in\{0.03,0.1,0.3,0.5,1\}$ on 20~Newsgroups ($m=7,19$) and on the synthetic instances ($m=4,16$, seed $0$), $c\in\{0.1,0.3,0.5\}$ on the real problems at $\kappa=10^4$, and the default $c=0.1$ elsewhere, and we report the best $c$ in hindsight. Large fractions make each inner solve cheap but the lower bound $\beta$ loose. With $c=1$ the method failed to certify $\eps$ on the synthetic instance with $m=4$ for every $\kappa$, on the synthetic instance with $m=16$ for $\kappa\ge10^6$, and on 20News with $20$ groups at $\kappa=10^7$ (budget: $2\cdot10^4$ calls to $\Oy$ or $5$ hours), and it needed thousands of $y$-queries on the problems with $m\ge16$ when it did converge.

\paragraph{AGD on the primal function (\cref{sec:strong}).}
Nesterov's method for strongly convex functions on $P$ with step $1/L_P$, $L_P=L+2\ell^2/\nu$, and the inexact gradient $\nabla_xf(v,\tilde y)$ at a $\delta$-maximizer $\tilde y$ of $f(v,\cdot)$ with $\delta=\eps/(4\sqrt{L_P/\mu})$. By \cref{lem:model} this is a $(2\delta,L_P)$-inexact oracle \citep{devolder2014first}. It uses one call to $\Ox$ per step and one $y$-subproblem per step. For both this baseline and \cref{alg:cross}, the $y$-subproblems of the strongly concave problems are solved by projected gradient ascent with step $1/\nu$ (\cref{thm:cross} allows any $\delta$-maximizer), so the number of calls to $\Oy$ is proportional to the number of $y$-subproblems.

\paragraph{Single-loop baselines.}
Extragradient \citep{korpelevich1976extragradient} on the joint variable with Euclidean projection onto $\cY$ (two calls to each oracle per iteration) and, for group DRO, simultaneous gradient descent in $x$ with exponentiated-gradient ascent on the group weights \citep[deterministic version of][]{sagawa2020distributionally}. Both have separate step sizes for the two blocks, which we tune over a grid ($\eta_x\in\{0.5,1\}/L_{\max}$, $\eta_y\in\{0.1,1,10,100\}$) and report the best configuration \emph{in hindsight}, i.e., the one reaching the target accuracy with the fewest calls to $\Ox$; both the last and the averaged iterate are evaluated and the better one is reported. These choices favor the baselines. CT and the nested schemes have no tuned parameters.

\paragraph{Centers of gravity.}
For $m\le4$ we compute the exact centroid of the localizer (vertex enumeration with Qhull and triangulation). For $m\ge5$ we use the approximate centroid given by the average of $\max(4000,150m^2)$ hit-and-run samples with a rounding matrix carried over from the previous call, in the spirit of \citet{bertsimas2004solving}. This affects only the number of iterations, never the correctness of the certificates, which hold for every choice of query points.

\paragraph{Evaluation.}
The reported gap is $\Gap(\hat x,\hat y)=P(\hat x)-\phi(\hat y)$ of each method's output, computed independently of the method and without counting oracle calls: $P(\hat x)$ is computed exactly (a vertex of the simplex or the box, or a projection when $\nu>0$), and $\phi(\hat y)$ is bounded from below by $f(\tilde x,\hat y)-\nrm{\nabla_xf(\tilde x,\hat y)}^2/(2\mu)$ after running Newton-CG on $f(\cdot,\hat y)$ to relative accuracy $10^{-4}$. The reported numbers are therefore rigorous upper bounds on the gap. For CT and the nested schemes the output is $(\bar x,y_b)$ (resp.\ $(\bar x,y_{\rm best})$), exactly as returned by the algorithms; in every run we checked that the true gap never exceeds the certified width.
The evaluation solves for $\phi(\hat y)$ until the gradient bound $\nrm{\nabla_xf(\tilde x,\hat y)}^2/(2\mu)$ is below $10^{-4}$ times the gap being measured (a relative accuracy \emph{of the gap}, not of $\phi$), so the reported gap may exceed the true gap by at most this relative amount; in $24$ of the $4{,}068$ recorded points (from $12$ runs) this made the reported gap exceed the certified width, by at most $6.4\cdot10^{-5}$ in relative terms. We re-ran these $12$ runs and re-evaluated all their outputs with relative accuracy $10^{-7}$ or better; the true gap was then below the certified width everywhere.

\paragraph{Compute.}
All experiments ran on a single 22-core CPU machine, one process and one BLAS thread per run; the whole suite (about $900$ runs) takes several hours on this machine and is dominated by the tuned single-loop baselines and by hit-and-run centers for $m\ge16$.

\subsection{Additional results}\label{app:exp-more}

\begin{table}[h]
\centering
\caption{Weak regularization ($\eps=10^{-6}$): calls $\Tx$\,/\,$\Ty$ of CT, of the level baseline with its default fraction $c=0.1$ and with its best fraction in hindsight (value of $c$ in parentheses), and of the adaptive and warm-started nested schemes. Bold: fewest calls in each row (separately for $\Tx$ and $\Ty$). ``--'': not run or did not certify $\eps$ within the budget.}
\label{tab:weak}
\scriptsize
\setlength{\tabcolsep}{3pt}
\begin{tabular}{llccccc}
\toprule
problem & $\kappa$ & CT & level, $c=0.1$ & level, best $c$ & nested, adaptive & nested, warm\\
\midrule
20News, 20 groups ($m=19$) & $10^{4}$ & 8{,}735\,/\,112 & 16{,}523\,/\,\textbf{98} & \textbf{7{,}644}\,/\,111 (0.5) & 17{,}941\,/\,122 & 36{,}684\,/\,125\\
 & $10^{5}$ & \textbf{25{,}933}\,/\,104 & 57{,}399\,/\,\textbf{98} & 27{,}907\,/\,111 (0.5) & 58{,}557\,/\,113 & 100{,}665\,/\,110\\
 & $10^{6}$ & \textbf{79{,}521}\,/\,100 & 168{,}809\,/\,\textbf{81} & 83{,}557\,/\,139 (0.5) & 182{,}646\,/\,107 & 332{,}526\,/\,111\\
 & $10^{7}$ & \textbf{228{,}277}\,/\,88 & 401{,}969\,/\,\textbf{61} & 267{,}176\,/\,1{,}082 (0.5) & 465{,}067\,/\,125 & 1{,}172{,}507\,/\,128\\
\addlinespace
20News, 8 groups ($m=7$) & $10^{4}$ & 6{,}131\,/\,45 & 8{,}367\,/\,46 & \textbf{3{,}924}\,/\,47 (0.5) & 7{,}769\,/\,46 & 13{,}488\,/\,\textbf{41}\\
 & $10^{5}$ & 19{,}150\,/\,44 & 27{,}889\,/\,41 & \textbf{13{,}453}\,/\,42 (0.5) & 25{,}186\,/\,43 & 39{,}142\,/\,\textbf{40}\\
 & $10^{6}$ & 58{,}891\,/\,42 & 79{,}268\,/\,\textbf{35} & \textbf{43{,}979}\,/\,413 (1) & 77{,}770\,/\,39 & 113{,}717\,/\,37\\
 & $10^{7}$ & 170{,}604\,/\,37 & 206{,}029\,/\,\textbf{28} & \textbf{116{,}934}\,/\,49 (0.5) & 187{,}866\,/\,38 & 355{,}585\,/\,41\\
\addlinespace
Adult, 4 groups ($m=3$) & $10^{4}$ & 3{,}601\,/\,\textbf{16} & -- & -- & \textbf{2{,}655}\,/\,43 & 6{,}774\,/\,43\\
 & $10^{5}$ & 11{,}356\,/\,\textbf{16} & 7{,}651\,/\,19 & 7{,}651\,/\,19 (0.1) & \textbf{7{,}400}\,/\,43 & 20{,}590\,/\,43\\
 & $10^{6}$ & 37{,}732\,/\,\textbf{17} & 22{,}354\,/\,19 & 22{,}354\,/\,19 (0.1) & \textbf{19{,}905}\,/\,44 & 66{,}580\,/\,44\\
 & $10^{7}$ & 118{,}464\,/\,\textbf{17} & 68{,}531\,/\,20 & 68{,}531\,/\,20 (0.1) & \textbf{58{,}671}\,/\,44 & 251{,}010\,/\,44\\
\addlinespace
20News NP ($m=1$) & $10^{4}$ & 3{,}178\,/\,\textbf{13} & \textbf{1{,}378}\,/\,16 & \textbf{1{,}378}\,/\,16 (0.1) & 2{,}410\,/\,16 & 6{,}828\,/\,15\\
 & $10^{5}$ & 10{,}016\,/\,\textbf{13} & \textbf{4{,}760}\,/\,16 & \textbf{4{,}760}\,/\,16 (0.1) & -- & 23{,}144\,/\,16\\
 & $10^{6}$ & 31{,}568\,/\,\textbf{13} & \textbf{21{,}064}\,/\,17 & \textbf{21{,}064}\,/\,17 (0.1) & 24{,}615\,/\,19 & 75{,}881\,/\,17\\
 & $10^{7}$ & 99{,}526\,/\,\textbf{13} & \textbf{60{,}839}\,/\,17 & \textbf{60{,}839}\,/\,17 (0.1) & -- & 239{,}422\,/\,18\\
\addlinespace
Synthetic ($m=16$) & $10^{4}$ & 9{,}248\,/\,130 & 6{,}992\,/\,\textbf{57} & \textbf{4{,}177}\,/\,66 (0.3) & 26{,}285\,/\,222 & 71{,}113\,/\,224\\
 & $10^{5}$ & 29{,}284\,/\,131 & 44{,}534\,/\,\textbf{81} & \textbf{19{,}329}\,/\,164 (0.5) & 91{,}504\,/\,178 & 193{,}446\,/\,175\\
 & $10^{6}$ & \textbf{85{,}893}\,/\,121 & 219{,}211\,/\,93 & 143{,}954\,/\,\textbf{85} (0.3) & 364{,}163\,/\,156 & 626{,}619\,/\,153\\
 & $10^{7}$ & \textbf{277{,}572}\,/\,123 & 976{,}732\,/\,\textbf{101} & 647{,}344\,/\,214 (0.5) & 1{,}364{,}564\,/\,141 & 2{,}136{,}345\,/\,139\\
\addlinespace
Synthetic ($m=4$) & $10^{4}$ & 4{,}568\,/\,33 & 4{,}235\,/\,\textbf{30} & \textbf{3{,}196}\,/\,35 (0.3) & 5{,}610\,/\,34 & 12{,}075\,/\,32\\
 & $10^{5}$ & 15{,}869\,/\,36 & 18{,}392\,/\,\textbf{31} & \textbf{9{,}717}\,/\,44 (0.5) & 23{,}322\,/\,35 & 43{,}280\,/\,35\\
 & $10^{6}$ & 47{,}935\,/\,34 & 76{,}521\,/\,\textbf{32} & \textbf{30{,}944}\,/\,35 (0.5) & 97{,}874\,/\,36 & 163{,}584\,/\,36\\
 & $10^{7}$ & 135{,}480\,/\,\textbf{30} & 304{,}671\,/\,31 & \textbf{133{,}266}\,/\,32 (0.5) & 360{,}539\,/\,33 & 551{,}295\,/\,32\\
 & $10^{8}$ & \textbf{433{,}824}\,/\,\textbf{30} & 1{,}121{,}064\,/\,\textbf{30} & 713{,}778\,/\,35 (0.5) & 1{,}411{,}130\,/\,35 & 2{,}013{,}173\,/\,34\\
\bottomrule
\end{tabular}
\end{table}

\begin{table}[h]
\centering
\caption{The level baseline with a fixed accuracy fraction $c$, over all (problem, $\kappa$) cells of \cref{tab:weak} where it was run: geometric mean of $\Tx(\text{level}_c)/\Tx(\text{CT})$ over the cells where it certified $\eps$, and the number of cells where it did not. Bold: best value in each row.}
\label{tab:levelfrac}
\small
\begin{tabular}{lccccc}
\toprule
$c$ & 0.03 & 0.1 & 0.3 & 0.5 & 1\\
\midrule
geometric mean of the ratio & 2.01 & 1.21 & 1.09 & \textbf{0.91} & 1.07\\
failures / cells & \textbf{0/13} & 1/25 & \textbf{0/17} & 1/17 & 8/17\\
\bottomrule
\end{tabular}
\end{table}

\begin{table}[h]
\centering
\caption{Calls $\Tx$\,/\,$\Ty$ until the true gap is $\le10^{-7}$ on the real problems ($\kappa=10^4$). Tuned methods: best configuration in hindsight (the level baseline over $c\in\{0.1,0.3,0.5\}$). Single-loop methods are evaluated on a geometric grid of ratio $1.04$. Bold: fewest calls in each column (separately for $\Tx$ and $\Ty$). $^\dagger$No guarantee of the order of \cref{thm:general}.}
\label{tab:real}
\scriptsize
\setlength{\tabcolsep}{3pt}
\begin{tabular}{lccccc}
\toprule
 & NP, 20News & DRO, Adult & constr., Adult & DRO, 20News & DRO$+\ell_1$, Adult\\
 & $m=1$ & $m=3$ & $m=4$ & $m=7$ & $m=3$\\
\midrule
CT (practical) & 3{,}462\,/\,\textbf{14} & 3{,}931\,/\,\textbf{18} & 3{,}817\,/\,\textbf{27} & 6{,}995\,/\,\textbf{53} & 2{,}670\,/\,\textbf{17}\\
CT, steps of \cref{alg:block} & 6{,}563\,/\,25 & 16{,}051\,/\,92 & 17{,}140\,/\,121 & 8{,}977\,/\,72 & 10{,}199\,/\,63\\
Nested, cold start & 11{,}575\,/\,16 & 33{,}127\,/\,49 & 50{,}231\,/\,63 & 46{,}941\,/\,56 & 26{,}933\,/\,39\\
Nested, warm start & 8{,}687\,/\,16 & 10{,}158\,/\,51 & 22{,}808\,/\,65 & 20{,}862\,/\,55 & 10{,}279\,/\,38\\
Nested, warm+adaptive$^\dagger$ & 2{,}810\,/\,17 & \textbf{3{,}262}\,/\,51 & 6{,}939\,/\,65 & 9{,}691\,/\,55 & 4{,}289\,/\,40\\
Level, best $c$$^\dagger$ & \textbf{1{,}603}\,/\,16 ($c=0.1$) & 3{,}344\,/\,40 ($c=0.3$) & \textbf{2{,}190}\,/\,40 ($c=0.3$) & \textbf{4{,}562}\,/\,54 ($c=0.5$) & \textbf{1{,}523}\,/\,18 ($c=0.1$)\\
Extragradient (tuned) & 74{,}154\,/\,74{,}154 & 54{,}008\,/\,54{,}008 & 66{,}718\,/\,66{,}718 & 91{,}606\,/\,91{,}606 & $>$100{,}000\\
GDA-MW (tuned) & -- & 29{,}853\,/\,29{,}853 & -- & 54{,}433\,/\,54{,}433 & --\\
\bottomrule
\end{tabular}
\end{table}

\paragraph{Cuts and transports.}
With few groups almost every query of CT raises the lower bound: $f$ is linear in $y$ and $f(\cdot,y)$ is strongly convex, so $\phi$ is differentiable (Danskin's theorem), a few planes make the upper model accurate, and the certified width is dominated by $b$. With many groups the localizer matters: on 20News DRO with $m=7$ about half of the queries of CT are cuts, and on the 20-group problem $28$--$41$ of about $100$. Without the direct certificate (``CT, steps of \cref{alg:block}'' and ``geometric steps'' in the figures) almost all queries are transports, and larger blocks reduce $\Tx$ up to $K\approx(1.6\text{--}3.6)K_0$ (\cref{fig:ksweep}).

\paragraph{Wall-clock time.}
All runs shared one machine with a varying load, so we compare oracle calls, not time. The share of wall-clock time that CT spends inside the two oracles is $82$--$100\%$ for $m\le4$ with exact centers, $66$--$92\%$ for $m=7$, and $17$--$89\%$ for $m\in\{16,19\}$, where hit-and-run centers are relatively expensive when $\kappa$ (and hence the cost per query) is small. Computing implementable centers more efficiently \citep{bertsimas2004solving,vaidya1996new} is therefore what matters in practice for $m\gtrsim10$.

\paragraph{Seeds of the approximate centers.}
On 20News DRO ($m=7$, hit-and-run centers) three seeds of the center oracle changed the number of calls to $\Ox$ of CT by at most $2\%$ and of the nested and level schemes by at most $6\%$.

\begin{table}[h]
\centering
\caption{Price of the $y$-block for $m=1$ (Neyman--Pearson on 20News, $\eps=10^{-6}$): calls to $\Ox$ of each method divided by the calls of accelerated gradient descent on $f(\cdot,y^\star)$ with $y^\star$ known in advance (absolute counts of the latter in the last row).}
\label{tab:price}
\small
\begin{tabular}{lcccc}
\toprule
$\kappa$ & $10^2$ & $10^3$ & $10^4$ & $10^5$\\
\midrule
CT (\cref{alg:ct}) & 12.2 & 12.9 & 14.0 & 16.5\\
CT, geometric steps & 7.6 & 8.6 & 9.2 & 11.6\\
Nested, cold start & 14.3 & 15.9 & 17.1 & 21.4\\
Nested, warm start & 9.6 & 11.6 & 12.9 & 15.7\\
\midrule
AGD on $f(\cdot,y^\star)$ (calls) & 50 & 169 & 529 & 1{,}477\\
\bottomrule
\end{tabular}
\end{table}

\begin{figure}[h]
\centering
\includegraphics[width=\linewidth]{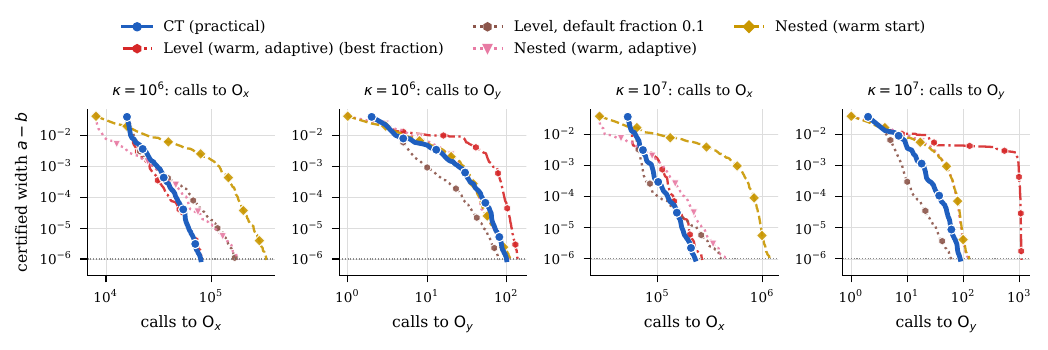}
\caption{Group DRO over the 20 newsgroups ($m=19$, $n=20{,}001$): certified width versus calls to $\Ox$ and to $\Oy$ for $\kappa=10^6$ and $10^7$.}
\label{fig:weakconv}
\end{figure}

\begin{figure}[h]
\centering
\includegraphics[width=\linewidth]{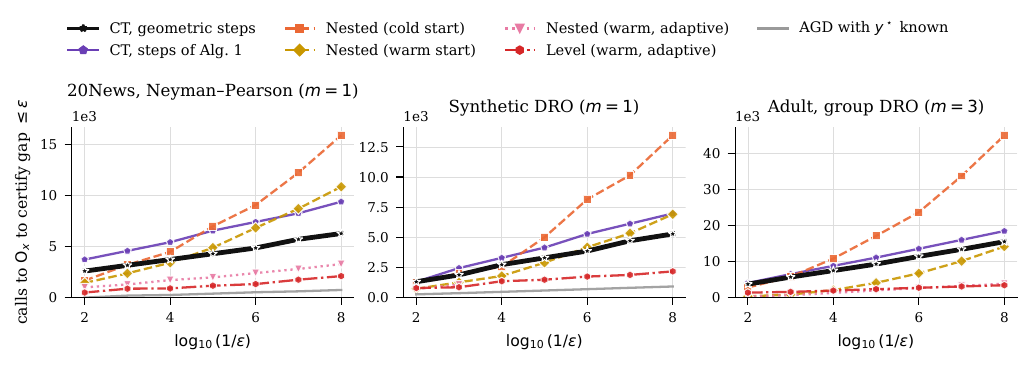}
\caption{Calls to $\Ox$ needed to certify gap $\le\eps$ ($\kappa=10^4$). CT is run once and read off at every $\eps$ (anytime); the nested schemes are rerun for every $\eps$ because their inner accuracy depends on $\eps$. The gray curve is accelerated gradient descent on $f(\cdot,y^\star)$ with $y^\star$ given in advance.}
\label{fig:eps}
\end{figure}

\begin{figure}[h]
\centering
\includegraphics[width=\linewidth]{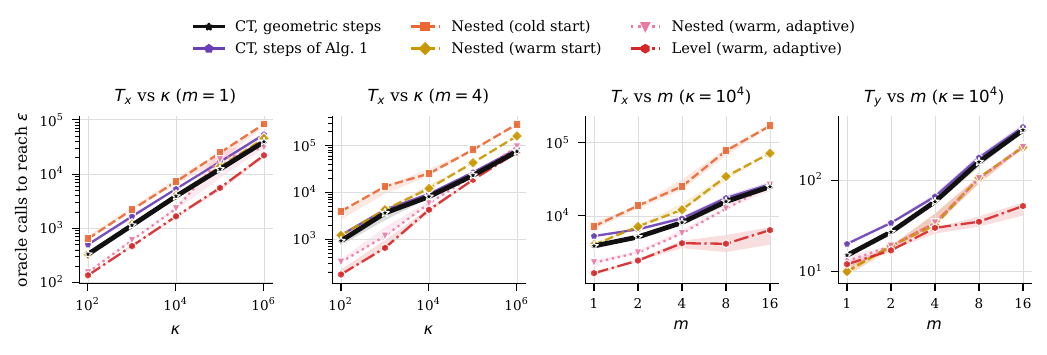}
\caption{Synthetic group DRO: calls to reach $\eps=10^{-6}$ (median over three instances, band: min--max). Left: dependence on $\kappa$. Right: dependence on $m$ at $\kappa=10^4$.}
\label{fig:scaling}
\end{figure}

\begin{figure}[h]
\centering
\includegraphics[width=\linewidth]{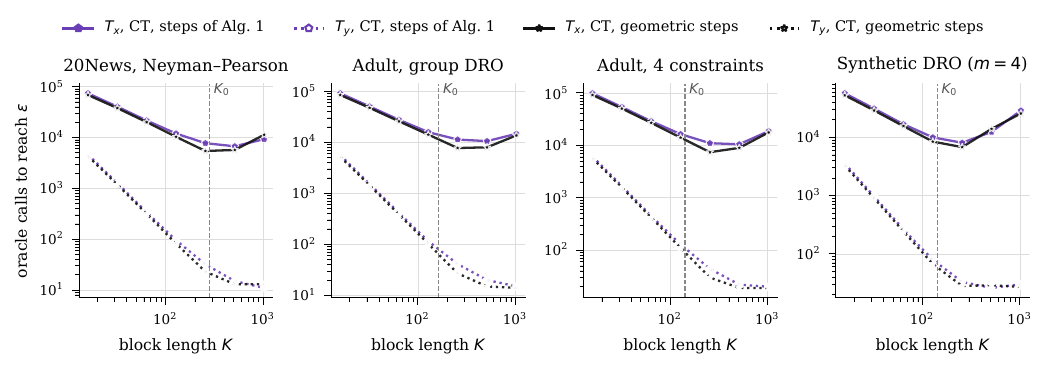}
\caption{Calls to $\Ox$ (solid) and $\Oy$ (dotted) to reach $\eps=10^{-6}$ as a function of the block length $K$, for the step sizes of \cref{alg:block} and for geometric steps ($\kappa=10^4$). The dashed line marks the default $K_0=\lceil\sqrt{8\kappa/m}\,\rceil$. Runs that did not reach $\eps$ within $2\cdot10^4$ calls to $\Oy$ are omitted.}
\label{fig:ksweep}
\end{figure}

\paragraph{Warm starts with adaptive inner accuracy: fast but not robust.}
Making the inner accuracy of the warm-started nested scheme proportional to the current width, $\delta=\max\{\eps/2,0.1(a-\beta)\}$, makes the nested scheme the fastest method on three of our four real problems, but it can break the method. A coarse early cut is the plane of $f(\tilde x,\cdot)$ with an inexact $\tilde x$ and can remove the maximizer of $\phi$ from the localizer. \Cref{lem:cog} then only guarantees $a-\beta\le B(1-e^{-1})^{t/m}+\max_s\delta_s$. This happened in our synthetic experiments: the variant failed to certify the target accuracy in $4$ of the $39$ scaling runs, all with $m=1$ ($4$ of $15$ such runs), and on the instance of \cref{fig:eps} (synthetic, $m=1$) for every target $\eps\le10^{-4}$, and in $1$ of the $12$ runs of the strongly concave experiment. In each failure the certified width froze between $5\cdot10^{-6}$ and $3\cdot10^{-4}$ and did not move during $2\cdot10^4$ further calls to $\Oy$. It never failed on the real problems. CT and the two nested schemes with guarantees certified the target accuracy in every one of their runs.

\section{Detailed discussion of related work}\label{app:related}

\paragraph{Information-based complexity and cutting-plane methods.}
The theory of oracle complexity was laid out in the monograph of \citet{nemirovski1983problem}. It shows that $m$-dimensional convex problems in the high-accuracy regime require, and can be solved with, $\Theta(m\log\frac1\eps)$ first-order queries. The upper bound is attained by the center-of-gravity method \citep{levin1965algorithm,newman1965location}, which rests on Grünbaum's inequality \citep{grunbaum1960partitions}. The method itself is not implementable. The ellipsoid method \citep{shor1977cut,nemirovski1983problem} makes it polynomial-time at the price of $O(m^2\log\frac1\eps)$ queries, while Vaidya's method \citep{vaidya1996new}, approximate centers of gravity based on random walks \citep{bertsimas2004solving} and modern cutting-plane methods \citep{lee2015faster,jiang2020improved} keep the optimal linear dependence on $m$ at polynomial arithmetic cost; the last of these works also treats convex-concave games. Using only function values, \citet{lee2018efficient} need $\Otil(m^2)$ evaluations. Accuracy certificates \citep{nemirovski2010accuracy} make the output of cutting-plane methods verifiable; we use a related, model-based notion of certificate. \citet{rodomanov2023subgradient} proposed a subgradient ellipsoid method that interpolates between the ellipsoid method and subgradient methods; we return to it in \cref{app:open}. For a textbook exposition, see \citet{nesterov2018lectures}.

\paragraph{Problems with a low-dimensional block.}
Problems in which one group of variables has small dimension were studied systematically in a series of works \citep{gladin2021solving,gladin2023solving,gladin2022vaidya,gladin2023algorithm,pasechnyuk2019one}. \citet{gladin2021solving} consider min-min problems that are smooth and strongly convex in one block and low-dimensional in the other: an outer ellipsoid-type method is combined with an inner accelerated method, and the effect of the inexactness of the outer oracle is analyzed. \citet{gladin2023solving} use the same kind of nested scheme with inexact oracles for strongly convex-concave composite saddle-point problems with a low-dimensional block and obtain, in our notation, the bound \eqref{eq:nested}, $\Tx=O\big(m\sqrt\kappa\log\frac B\eps\log\frac{\mu\Rx^2+B}\eps\big)$, up to an additional $\log m$ inside the first logarithm that comes from their outer method. \citet{gladin2022vaidya} extend Vaidya's method to stochastic low-dimensional problems, and \citet{gladin2023algorithm} use a low-dimensional dual problem to build linearly convergent algorithms for constrained Markov decision processes. \citet{pasechnyuk2019one} develop a dichotomy-type method for functions of two variables.

Our work continues this line. We show that the multiplicative cost of nested schemes can be removed with no assumptions on the $y$-block beyond concavity and Lipschitz continuity: the product $m\sqrt\kappa$ is replaced by $\sqrt{m\kappa}$, and the second logarithm disappears. The key difference from nested schemes is that inner problems are never solved to a prescribed accuracy: an inexact answer is either used as a cut or transports the lower model to a new point.

\paragraph{Inexact oracles and models.}
The analysis of inexact oracles goes back to \citet{devolder2014first}; \citet{stonyakin2021inexact} introduce the notion of an inexact model of a function. The method of \cref{sec:strong} relies on exactly this technique: under strong concavity in $y$, the primal function $P$ admits a smooth inexact model built from approximate solutions of the problem in $y$.

\paragraph{Complexity separation in saddle-point problems.}
Gradient sliding \citep{lan2016gradient} separates the numbers of gradient evaluations for the smooth and nonsmooth parts of composite problems. \citet{alkousa2020accelerated} separate the complexities in the blocks $x$ and $y$ for smooth strongly convex-concave problems; \citet{borodich2023optimal} obtain complexity separation for strongly convex-strongly concave composite problems of the form $p(x)+R(x,y)-q(y)$; \citet{kovalev2022accelerated} and \citet{thekumparampil2022lifted} propose optimal methods for bilinear coupling; and \citet{kovalev2024linear} establish lower bounds and optimal algorithms that separate the numbers of gradient evaluations and matrix-vector products for bilinearly coupled problems. \citet{jin2022sharper} obtain sharper rates for separable minimax problems. For smooth convex-concave problems, lower complexity bounds are given by \citet{zhang2022lower}; classical methods include the extragradient method \citep{korpelevich1976extragradient}, mirror-prox \citep{nemirovski2004prox} and smoothing \citep{nesterov2005smooth}.

All of these works assume smoothness in both blocks (or a nonsmooth part with a simple proximal step) and give dimension-free bounds. We instead allow a nonsmooth, merely concave $y$-block and exploit its small dimension. Closer in spirit, \citet{carmon2021thinking} show that minimizing the maximum of $N$ losses admits near-optimal methods based on ball-oracle acceleration \citep{carmon2020acceleration}. Their oracle, however, accesses one loss at a time, whereas our $y$-oracle returns a value and a supergradient for the whole small block at once, so the oracle models differ.

\paragraph{Other oracles for the small block.}
The small dimension of the $y$-block also makes other types of oracles attractive; we discuss them in \cref{app:open}. In particular, a derivative of order $p$ in dimension $m$ has only $m^p$ entries, so tensor methods \citep{nesterov2021implementable}, which attain optimal rates for functions with Lipschitz higher-order derivatives \citep{kovalev2022first,carmon2022optimal}, become affordable for the small block.

\section{Open questions}\label{app:open}
\begin{enumerate}[leftmargin=1.3em,itemsep=2pt,topsep=2pt]
\item \textbf{Implementable centers.} Our analysis uses the exact center of gravity of the localizer and treats its computation as free. Approximate centers of gravity based on random walks \citep{bertsimas2004solving} satisfy Grünbaum's lemma with a worse constant, and we expect our bounds to carry over with only the absolute constants changed. Vaidya's method \citep{vaidya1996new} relies on a different potential, and combining it with certificate transport requires a separate analysis.

\item \textbf{A small block with function values only.} If $\Oy$ returns only $f(x,y)$, a separation oracle for a concave Lipschitz function in dimension $m$ can be emulated with $\Otil(m)$ function values \citep{lee2018efficient}. Since certificate transport needs only the single value $v=f(u_K,y_c)$, one may expect $\Ty=\Otil(m^2\log\frac B\eps)$ with the same $\Tx$. The difficulty is that planes built from approximate supergradients are no longer exact upper bounds on $\phi$, so the bound $a$ must be replaced by an approximate one with controlled error.

\item \textbf{Adaptive block length and adaptive steps in $y$.} \Cref{thm:general} uses a fixed block length $K$. In practice, one can monitor the fraction of iterations that transport the certificate and adjust $K$ on the fly. The center-of-gravity step could also be replaced by a step of the subgradient ellipsoid method \citep{rodomanov2023subgradient}, which produces accuracy certificates \citep{nemirovski2010accuracy} and interpolates between cutting planes and subgradient methods. This would allow a smooth transition from the low-dimensional to the high-dimensional regime in $y$ without switching algorithms.
\end{enumerate}

Finally, the small dimension of the $y$-block also makes higher-order oracles attractive: a derivative of order $p$ in dimension $m$ has only $m^p$ entries, and tensor methods \citep{nesterov2021implementable,kovalev2022first,carmon2022optimal} could replace cutting planes when $f$ is sufficiently smooth in $y$.

\clearpage
\bibliographystyle{plainnat}
\bibliography{references}

@book{nemirovski1983problem,
  author    = {Nemirovsky, A. S. and Yudin, D. B.},
  title     = {Problem Complexity and Method Efficiency in Optimization},
  publisher = {Wiley},
  address   = {New York},
  year      = {1983}
}

@article{levin1965algorithm,
  author  = {Levin, A. Yu.},
  title   = {On an algorithm for the minimization of convex functions},
  journal = {Soviet Mathematics Doklady},
  volume  = {6},
  pages   = {286--290},
  year    = {1965}
}

@article{newman1965location,
  author  = {Newman, Donald J.},
  title   = {Location of the maximum on unimodal surfaces},
  journal = {Journal of the ACM},
  volume  = {12},
  number  = {3},
  pages   = {395--398},
  year    = {1965}
}

@article{shor1977cut,
  author  = {Shor, N. Z.},
  title   = {Cut-off method with space extension in convex programming problems},
  journal = {Cybernetics},
  volume  = {13},
  number  = {1},
  pages   = {94--96},
  year    = {1977}
}

@article{grunbaum1960partitions,
  author  = {Gr{\"u}nbaum, B.},
  title   = {Partitions of mass-distributions and of convex bodies by hyperplanes},
  journal = {Pacific Journal of Mathematics},
  volume  = {10},
  number  = {4},
  pages   = {1257--1261},
  year    = {1960}
}

@article{vaidya1996new,
  author  = {Vaidya, Pravin M.},
  title   = {A new algorithm for minimizing convex functions over convex sets},
  journal = {Mathematical Programming},
  volume  = {73},
  number  = {3},
  pages   = {291--341},
  year    = {1996}
}

@article{bertsimas2004solving,
  author  = {Bertsimas, Dimitris and Vempala, Santosh},
  title   = {Solving convex programs by random walks},
  journal = {Journal of the ACM},
  volume  = {51},
  number  = {4},
  pages   = {540--556},
  year    = {2004}
}

@inproceedings{lee2015faster,
  author    = {Lee, Yin Tat and Sidford, Aaron and Wong, Sam Chiu-wai},
  title     = {A faster cutting plane method and its implications for combinatorial and convex optimization},
  booktitle = {IEEE 56th Annual Symposium on Foundations of Computer Science (FOCS)},
  pages     = {1049--1065},
  year      = {2015}
}

@inproceedings{jiang2020improved,
  author    = {Jiang, Haotian and Lee, Yin Tat and Song, Zhao and Wong, Sam Chiu-wai},
  title     = {An improved cutting plane method for convex optimization, convex-concave games, and its applications},
  booktitle = {Proceedings of the 52nd Annual ACM SIGACT Symposium on Theory of Computing (STOC)},
  pages     = {944--953},
  year      = {2020}
}

@inproceedings{lee2018efficient,
  author    = {Lee, Yin Tat and Sidford, Aaron and Vempala, Santosh S.},
  title     = {Efficient convex optimization with membership oracles},
  booktitle = {Proceedings of the 31st Conference on Learning Theory (COLT)},
  series    = {Proceedings of Machine Learning Research},
  volume    = {75},
  pages     = {1292--1294},
  year      = {2018}
}

@article{rodomanov2023subgradient,
  author  = {Rodomanov, Anton and Nesterov, Yurii},
  title   = {Subgradient ellipsoid method for nonsmooth convex problems},
  journal = {Mathematical Programming},
  volume  = {199},
  pages   = {305--341},
  year    = {2023}
}

@article{nemirovski2010accuracy,
  author  = {Nemirovski, Arkadi and Onn, Shmuel and Rothblum, Uriel G.},
  title   = {Accuracy certificates for computational problems with convex structure},
  journal = {Mathematics of Operations Research},
  volume  = {35},
  number  = {1},
  pages   = {52--78},
  year    = {2010}
}

@book{nesterov2018lectures,
  author    = {Nesterov, Yurii},
  title     = {Lectures on Convex Optimization},
  edition   = {2},
  series    = {Springer Optimization and Its Applications},
  volume    = {137},
  publisher = {Springer},
  year      = {2018}
}

@article{nesterov1983method,
  author  = {Nesterov, Yu. E.},
  title   = {A method for solving the convex programming problem with convergence rate {$O(1/k^2)$}},
  journal = {Soviet Mathematics Doklady},
  volume  = {27},
  pages   = {372--376},
  year    = {1983}
}

@article{sion1958general,
  author  = {Sion, Maurice},
  title   = {On general minimax theorems},
  journal = {Pacific Journal of Mathematics},
  volume  = {8},
  number  = {1},
  pages   = {171--176},
  year    = {1958}
}

@article{devolder2014first,
  author  = {Devolder, Olivier and Glineur, Fran{\c{c}}ois and Nesterov, Yurii},
  title   = {First-order methods of smooth convex optimization with inexact oracle},
  journal = {Mathematical Programming},
  volume  = {146},
  number  = {1--2},
  pages   = {37--75},
  year    = {2014}
}

@article{stonyakin2021inexact,
  author  = {Stonyakin, Fedor and Tyurin, Alexander and Gasnikov, Alexander and Dvurechensky, Pavel and Agafonov, Artem and Dvinskikh, Darina and Alkousa, Mohammad and Pasechnyuk, Dmitry and Artamonov, Sergei and Piskunova, Victoria},
  title   = {Inexact model: A framework for optimization and variational inequalities},
  journal = {Optimization Methods and Software},
  volume  = {36},
  number  = {6},
  pages   = {1155--1201},
  year    = {2021}
}

@article{gladin2023solving,
  author  = {Alkousa, Mohammad and Gasnikov, Alexander and Gladin, Egor and Kuruzov, Ilya and Pasechnyuk, Dmitry and Stonyakin, Fedor},
  title   = {Solving strongly convex-concave composite saddle-point problems with low dimension of one group of variable},
  journal = {Sbornik: Mathematics},
  volume  = {214},
  number  = {3},
  pages   = {285--333},
  year    = {2023}
}

@article{gladin2021solving,
  author  = {Gladin, E. and Alkousa, M. and Gasnikov, A.},
  title   = {Solving convex min-min problems with smoothness and strong convexity in one group of variables and low dimension in the other},
  journal = {Automation and Remote Control},
  volume  = {82},
  number  = {10},
  pages   = {1679--1691},
  year    = {2021}
}

@article{gladin2022vaidya,
  author  = {Gladin, E. L. and Gasnikov, A. V. and Ermakova, E. S.},
  title   = {Vaidya's method for convex stochastic optimization problems in small dimension},
  journal = {Mathematical Notes},
  volume  = {112},
  number  = {1--2},
  pages   = {183--190},
  year    = {2022}
}

@inproceedings{gladin2023algorithm,
  author    = {Gladin, Egor and Lavrik-Karmazin, Maksim and Zainullina, Karina and Rudenko, Varvara and Gasnikov, Alexander and Tak{\'a}{\v{c}}, Martin},
  title     = {Algorithm for constrained {M}arkov decision process with linear convergence},
  booktitle = {Proceedings of the 26th International Conference on Artificial Intelligence and Statistics (AISTATS)},
  series    = {Proceedings of Machine Learning Research},
  volume    = {206},
  pages     = {11506--11533},
  year      = {2023}
}

@article{pasechnyuk2019one,
  author  = {Pasechnyuk, D. A. and Stonyakin, F. S.},
  title   = {One method for minimization a convex {L}ipschitz-continuous function of two variables on a fixed square},
  journal = {Computer Research and Modeling},
  volume  = {11},
  number  = {3},
  pages   = {379--395},
  year    = {2019}
}

@article{alkousa2020accelerated,
  author  = {Alkousa, M. S. and Gasnikov, A. V. and Dvinskikh, D. M. and Kovalev, D. A. and Stonyakin, F. S.},
  title   = {Accelerated methods for saddle-point problem},
  journal = {Computational Mathematics and Mathematical Physics},
  volume  = {60},
  number  = {11},
  pages   = {1787--1809},
  year    = {2020}
}

@article{borodich2023optimal,
  author  = {Borodich, Ekaterina and Kormakov, Georgiy and Kovalev, Dmitry and Beznosikov, Aleksandr and Gasnikov, Alexander},
  title   = {Near-optimal algorithm with complexity separation for strongly convex-strongly concave composite saddle point problems},
  journal = {Optimization Methods and Software},
  doi     = {10.1080/10556788.2025.2545846},
  year    = {2025}
}

@inproceedings{kovalev2024linear,
  author    = {Borodich, Ekaterina and Gasnikov, Alexander and Kovalev, Dmitry},
  title     = {On linear convergence in smooth convex-concave bilinearly-coupled saddle-point optimization: Lower bounds and optimal algorithms},
  booktitle = {Proceedings of the 42nd International Conference on Machine Learning (ICML)},
  series    = {Proceedings of Machine Learning Research},
  volume    = {267},
  pages     = {5045--5100},
  year      = {2025}
}

@inproceedings{kovalev2022accelerated,
  author    = {Kovalev, Dmitry and Gasnikov, Alexander and Richt{\'a}rik, Peter},
  title     = {Accelerated primal-dual gradient method for smooth and convex-concave saddle-point problems with bilinear coupling},
  booktitle = {Advances in Neural Information Processing Systems (NeurIPS)},
  volume    = {35},
  year      = {2022}
}

@inproceedings{kovalev2022first,
  author    = {Kovalev, Dmitry and Gasnikov, Alexander},
  title     = {The first optimal acceleration of high-order methods in smooth convex optimization},
  booktitle = {Advances in Neural Information Processing Systems (NeurIPS)},
  volume    = {35},
  year      = {2022}
}

@article{nesterov2021implementable,
  author  = {Nesterov, Yurii},
  title   = {Implementable tensor methods in unconstrained convex optimization},
  journal = {Mathematical Programming},
  volume  = {186},
  pages   = {157--183},
  year    = {2021}
}

@inproceedings{carmon2022optimal,
  author    = {Carmon, Yair and Hausler, Danielle and Jambulapati, Arun and Jin, Yujia and Sidford, Aaron},
  title     = {Optimal and adaptive {M}onteiro-{S}vaiter acceleration},
  booktitle = {Advances in Neural Information Processing Systems (NeurIPS)},
  volume    = {35},
  year      = {2022}
}

@article{korpelevich1976extragradient,
  author  = {Korpelevich, G. M.},
  title   = {The extragradient method for finding saddle points and other problems},
  journal = {Matecon},
  volume  = {12},
  pages   = {747--756},
  year    = {1976}
}

@article{nemirovski2004prox,
  author  = {Nemirovski, Arkadi},
  title   = {Prox-method with rate of convergence {$O(1/t)$} for variational inequalities with {L}ipschitz continuous monotone operators and smooth convex-concave saddle point problems},
  journal = {SIAM Journal on Optimization},
  volume  = {15},
  number  = {1},
  pages   = {229--251},
  year    = {2004}
}

@article{nesterov2005smooth,
  author  = {Nesterov, Yu.},
  title   = {Smooth minimization of non-smooth functions},
  journal = {Mathematical Programming},
  volume  = {103},
  number  = {1},
  pages   = {127--152},
  year    = {2005}
}

@article{zhang2022lower,
  author  = {Zhang, Junyu and Hong, Mingyi and Zhang, Shuzhong},
  title   = {On lower iteration complexity bounds for the convex concave saddle point problems},
  journal = {Mathematical Programming},
  volume  = {194},
  pages   = {901--935},
  year    = {2022}
}

@article{lan2016gradient,
  author  = {Lan, Guanghui},
  title   = {Gradient sliding for composite optimization},
  journal = {Mathematical Programming},
  volume  = {159},
  pages   = {201--235},
  year    = {2016}
}

@inproceedings{thekumparampil2022lifted,
  author    = {Thekumparampil, Kiran Koshy and He, Niao and Oh, Sewoong},
  title     = {Lifted primal-dual method for bilinearly coupled smooth minimax optimization},
  booktitle = {Proceedings of the 25th International Conference on Artificial Intelligence and Statistics (AISTATS)},
  series    = {Proceedings of Machine Learning Research},
  volume    = {151},
  pages     = {4281--4308},
  year      = {2022}
}

@inproceedings{jin2022sharper,
  author    = {Jin, Yujia and Sidford, Aaron and Tian, Kevin},
  title     = {Sharper rates for separable minimax and finite sum optimization via primal-dual extragradient methods},
  booktitle = {Proceedings of the 35th Conference on Learning Theory (COLT)},
  series    = {Proceedings of Machine Learning Research},
  volume    = {178},
  pages     = {4362--4415},
  year      = {2022}
}

@inproceedings{carmon2021thinking,
  author    = {Carmon, Yair and Jambulapati, Arun and Jin, Yujia and Sidford, Aaron},
  title     = {Thinking inside the ball: Near-optimal minimization of the maximal loss},
  booktitle = {Proceedings of the 34th Conference on Learning Theory (COLT)},
  series    = {Proceedings of Machine Learning Research},
  volume    = {134},
  pages     = {866--882},
  year      = {2021}
}

@inproceedings{carmon2020acceleration,
  author    = {Carmon, Yair and Jambulapati, Arun and Jiang, Qijia and Jin, Yujia and Lee, Yin Tat and Sidford, Aaron and Tian, Kevin},
  title     = {Acceleration with a ball optimization oracle},
  booktitle = {Advances in Neural Information Processing Systems (NeurIPS)},
  volume    = {33},
  pages     = {19052--19063},
  year      = {2020}
}

@inproceedings{sagawa2020distributionally,
  author    = {Sagawa, Shiori and Koh, Pang Wei and Hashimoto, Tatsunori B. and Liang, Percy},
  title     = {Distributionally robust neural networks for group shifts: On the importance of regularization for worst-case generalization},
  booktitle = {International Conference on Learning Representations (ICLR)},
  year      = {2020}
}

@inproceedings{agarwal2018reductions,
  author    = {Agarwal, Alekh and Beygelzimer, Alina and Dud{\'\i}k, Miroslav and Langford, John and Wallach, Hanna},
  title     = {A reductions approach to fair classification},
  booktitle = {Proceedings of the 35th International Conference on Machine Learning (ICML)},
  series    = {Proceedings of Machine Learning Research},
  volume    = {80},
  pages     = {60--69},
  year      = {2018}
}

@inproceedings{cotter2019two,
  author    = {Cotter, Andrew and Jiang, Heinrich and Sridharan, Karthik},
  title     = {Two-player games for efficient non-convex constrained optimization},
  booktitle = {Proceedings of the 30th International Conference on Algorithmic Learning Theory (ALT)},
  series    = {Proceedings of Machine Learning Research},
  volume    = {98},
  pages     = {300--332},
  year      = {2019}
}

@inproceedings{kohavi1996scaling,
  title     = {Scaling up the accuracy of {Naive-Bayes} classifiers: a decision-tree hybrid},
  author    = {Kohavi, Ron},
  booktitle = {Proceedings of the Second International Conference on Knowledge Discovery and Data Mining},
  pages     = {202--207},
  year      = {1996}
}

@article{lemarechal1995new,
  author  = {Lemar{\'e}chal, Claude and Nemirovskii, Arkadii and Nesterov, Yurii},
  title   = {New variants of bundle methods},
  journal = {Mathematical Programming},
  volume  = {69},
  number  = {1--3},
  pages   = {111--147},
  year    = {1995}
}

@article{kiwiel2006proximal,
  author  = {Kiwiel, Krzysztof C.},
  title   = {A proximal bundle method with approximate subgradient linearizations},
  journal = {SIAM Journal on Optimization},
  volume  = {16},
  number  = {4},
  pages   = {1007--1023},
  year    = {2006}
}

@article{deoliveira2014convex,
  author  = {de Oliveira, Welington and Sagastiz{\'a}bal, Claudia and Lemar{\'e}chal, Claude},
  title   = {Convex proximal bundle methods in depth: a unified analysis for inexact oracles},
  journal = {Mathematical Programming},
  volume  = {148},
  number  = {1--2},
  pages   = {241--277},
  year    = {2014}
}

@article{deoliveira2014level,
  author  = {de Oliveira, Welington and Sagastiz{\'a}bal, Claudia},
  title   = {Level bundle methods for oracles with on-demand accuracy},
  journal = {Optimization Methods and Software},
  volume  = {29},
  number  = {6},
  pages   = {1180--1209},
  year    = {2014}
}

@article{vanackooij2014level,
  author  = {van Ackooij, Wim and de Oliveira, Welington},
  title   = {Level bundle methods for constrained convex optimization with various oracles},
  journal = {Computational Optimization and Applications},
  volume  = {57},
  number  = {3},
  pages   = {555--597},
  year    = {2014}
}

@article{necoara2014rate,
  author  = {Necoara, Ion and Nedelcu, Valentin},
  title   = {Rate analysis of inexact dual first-order methods application to dual decomposition},
  journal = {IEEE Transactions on Automatic Control},
  volume  = {59},
  number  = {5},
  pages   = {1232--1243},
  year    = {2014}
}

@inproceedings{ji2021bilevel,
  author    = {Ji, Kaiyi and Yang, Junjie and Liang, Yingbin},
  title     = {Bilevel optimization: Convergence analysis and enhanced design},
  booktitle = {Proceedings of the 38th International Conference on Machine Learning (ICML)},
  series    = {Proceedings of Machine Learning Research},
  volume    = {139},
  pages     = {4882--4892},
  year      = {2021}
}

@inproceedings{lin2020near,
  author    = {Lin, Tianyi and Jin, Chi and Jordan, Michael I.},
  title     = {Near-optimal algorithms for minimax optimization},
  booktitle = {Proceedings of the 33rd Conference on Learning Theory (COLT)},
  series    = {Proceedings of Machine Learning Research},
  volume    = {125},
  pages     = {2738--2779},
  year      = {2020}
}

@inproceedings{kovalev2022optimal,
  author    = {Kovalev, Dmitry and Gasnikov, Alexander},
  title     = {The first optimal algorithm for smooth and strongly-convex-strongly-concave minimax optimization},
  booktitle = {Advances in Neural Information Processing Systems (NeurIPS)},
  volume    = {35},
  pages     = {14691--14703},
  year      = {2022}
}

@article{gasnikov2018universal,
  author  = {Gasnikov, A. V. and Nesterov, Yu. E.},
  title   = {Universal method for stochastic composite optimization problems},
  journal = {Computational Mathematics and Mathematical Physics},
  volume  = {58},
  number  = {1},
  pages   = {48--64},
  year    = {2018}
}

@inproceedings{gladin2021mixed,
  author    = {Gladin, Egor and Sadiev, Abdurakhmon and Gasnikov, Alexander and Dvurechensky, Pavel and Beznosikov, Aleksandr and Alkousa, Mohammad},
  title     = {Solving smooth min-min and min-max problems by mixed oracle algorithms},
  booktitle = {Mathematical Optimization Theory and Operations Research: Recent Trends (MOTOR 2021)},
  series    = {Communications in Computer and Information Science},
  pages     = {19--40},
  publisher = {Springer},
  year      = {2021},
  doi       = {10.1007/978-3-030-86433-0_2}
}

@article{gladin2023accuracy,
  author  = {Gladin, Egor and Gasnikov, Alexander and Dvurechensky, Pavel},
  title   = {Accuracy certificates for convex minimization with inexact oracle},
  journal = {Journal of Optimization Theory and Applications},
  volume  = {204},
  number  = {1},
  year    = {2025},
  doi     = {10.1007/s10957-024-02599-9}
}

@inproceedings{rademacher2007approximating,
  author    = {Rademacher, Luis A.},
  title     = {Approximating the centroid is hard},
  booktitle = {Proceedings of the 23rd Annual Symposium on Computational Geometry (SoCG)},
  pages     = {302--305},
  year      = {2007}
}

\end{document}